\let\oldbibliography\thebibliography
\renewcommand{\thebibliography}[1]{%
\oldbibliography{#1}%
\setlength{\itemsep}{0pt}%
}
\newtheorem{theorem}{Theorem}[section]
\newtheorem{lemma}[theorem]{Lemma} 
\newtheorem{proposition}[theorem]{Proposition}
\newtheorem{definition}[theorem]{Definition}
\newtheorem{example}[theorem]{Example}
\newtheorem{corollary}[theorem]{Corollary}
\newtheorem{remark}[theorem]{Remark}
\newcommand{\R}{\mathbb R}
\newcommand{\bt}{\begin{theorem}}
\newcommand{\et}{\end{theorem}}
\newcommand{\bl}{\begin{lemma}}
\newcommand{\el}{\end{lemma}}
\newcommand{\bd}{\begin{definition}}
\newcommand{\ed}{\end{definition}}
\newcommand{\bc}{\begin{corollary}}
\newcommand{\ec}{\end{corollary}}
\newcommand{\bp}{\begin{proof}}
\newcommand{\ep}{\end{proof}}
\newcommand{\bx}{\begin{example}}
\newcommand{\ex}{\end{example}}
\newcommand{\bi}{\begin{exercise}}
\newcommand{\ei}{\end{exercise}}
\newcommand{\bo}{\begin{prop}}
\newcommand{\eo}{\end{prop}}
\newcommand{\br}{\begin{remark}}
\newcommand{\er}{\end{remark}}
\newcommand{\be}{\begin{equation}}
\newcommand{\ee}{\end{equation}}
\newcommand{\ba}{\begin{align}}
\newcommand{\ea}{\end{align}}
\newcommand{\bn}{\begin{enumerate}}
\newcommand{\en}{\end{enumerate}}
\newcommand{\bg}{\begin{align*}}
\newcommand{\bcs}{\begin{cases}}
\newcommand{\ecs}{\end{cases}}
\newcommand{\bean}{\begin{eqnarray*}}
\newcommand{\eean}{\end{eqnarray*}}
\numberwithin{equation}{section}
\begin{document}
\title{ {\bf Local estimates for conformal $Q$-curvature equations} }
\date{July 9,   2021} 
\author{\\ {Tianling Jin \footnote{T. Jin was partially supported by Hong Kong RGC grant GRF 16302217.}
}
 \;\;  and \;\;   Hui Yang 
 }
\maketitle

\begin{center}
\begin{minipage}{130mm}
\begin{center}{\bf Abstract}\end{center}
We derive local estimates of positive solutions to the conformal $Q$-curvature equation 
$$
(-\Delta)^m u = K(x) u^{\frac{n+2m}{n-2m}} ~~~~~~  \textmd{in} ~ \Omega \backslash \Lambda 
$$
near their singular set $\Lambda$, where $\Omega \subset \mathbb{R}^n$ is an open set, $K(x)$  is a positive continuous function on $\Omega$, $\Lambda$ is a closed subset of $\mathbb{R}^n$, $2 \leq m  < n/2$ and $m$ is an integer.  Under certain flatness conditions at critical points of $K$ on $\Lambda$,  we prove that $u(x) \leq C  [\textmd{dist}(x,  \Lambda)]^{-(n-2m)/2}$ when the upper Minkowski dimension of $\Lambda$ is less than $(n-2m)/2$.    

\vskip0.10in

\noindent{\it Keywords:} conformal $Q$-curvature equations,  local estimates,  singular set. 

\vskip0.10in

\noindent {\it Mathematics Subject Classification (2010): } 35J91;  35B40;  45M05 

\end{minipage}

\end{center}

\vskip0.20in

\section{Introduction and main results} 

In this paper, we study the higher order conformal $Q$-curvature equation 
\begin{equation}\label{H-01} 
(-\Delta)^m u = K(x) u^{\frac{n+2m}{n-2m}}, ~~~ u>0  ~~~~~~  \textmd{in} ~ \Omega \backslash \Lambda
\end{equation} 
with a singular set $\Lambda$, where $\Omega \subset \mathbb{R}^n$ is an open set, $K(x)$  is a positive continuous function on $\Omega$,  $\Lambda$ is a closed subset of $\mathbb{R}^n$,  $1 \leq m  < n/2$ and $m$ is an integer.  Throughout the paper, {\it  $K(x)$ is assumed to be bounded between two positive constants in $\Omega$}.    

When $m=1$, Eq. \eqref{H-01} is the conformal scalar curvature equation which reads as 
\begin{equation}\label{H-01=00hh-01} 
-\Delta u = K(x) u^{\frac{n+2}{n-2}}, ~~~ u>0  ~~~~~~  \textmd{in} ~ \Omega \backslash \Lambda. 
\end{equation} 
This equation appears in the problem of finding a metric conformal to the flat metric $\delta_{ij}$ on $\mathbb{R}^n$ such that $K(x)$ is the scalar curvature of the new metric $u^{4/(n-2)}\delta_{ij}$.   The classical works of Schoen and Yau \cite{Sch84,Sch88,SY88} on the Yamabe problem and conformally flat manifolds have indicated the importance of studying the equation \eqref{H-01=00hh-01} with a singular set. In particular, an interesting question is to understand how $u(x)$ tends to infinity when $x$ approaches the singular set.  When $K(x) \equiv 1$ and $\Lambda=\{ 0 \}$ is an isolated singularity in $\Omega$, Caffarelli, Gidas and Spruck in the pioneering paper \cite{CGS} proved that every singular solution $u$ of \eqref{H-01=00hh-01} is asymptotically radially symmetric near $0$,  and further proved that $u$ is asymptotic to a radial singular solution of the same equation on $\mathbb{R}^n \backslash \{0\}$.  Later, Korevaar, Mazzeo, Pacard and Schoen in \cite{KMPS}  studied refined asymptotics and expanded such a singular solution $u$ to the first order.   Han, Li and Li \cite{HLL} recently  established the expansions up to arbitrary orders for such a singular solution $u$. Subsequent to \cite{CGS}, other second-order Yamabe type equations with isolated singularities related to \eqref{H-01=00hh-01} have also been studied; see, for example, \cite{CHanY, HLL,HLT,LC96,Li06,Mar,XZ} and the references therein.  When $K(x) \equiv 1$ and $\Lambda$ is a general singular set with Newtonian capacity zero, Chen and Lin \cite{Chen-Lin95} proved that any solution $u$ of \eqref{H-01=00hh-01} satisfies the following a priori estimate   
\begin{equation}\label{H-01=00hh-022} 
u(x) \leq C [\textmd{dist}(x, \Lambda)]^{-\frac{n-2}{2}}, 
\end{equation}
and showed  that $u$ is asymptotically symmetric near  $\Lambda$ based on this estimate.  In a series of masterful papers \cite{Chen-Lin97,Chen-Lin98,Chen-Lin99,Lin00},  Chen and Lin studied the equation \eqref{H-01=00hh-01}  in the case when $K(x)$ is a non-constant positive function. They first proved under some flatness conditions on $K(x)$ that every $C^2$ solution $u$ of \eqref{H-01=00hh-01} satisfies the a priori estimate \eqref{H-01=00hh-022} via the method of moving planes, and then applied the Pohozaev identity to describe the precise asymptotic behavior of solutions when the singularity set  $\Lambda$ is isolated.   By using the method of moving spheres,  Zhang in \cite{Zhang02} simplified and improved the argument of Chen-Lin \cite{Chen-Lin97} to derive the local estimate \eqref{H-01=00hh-022} for the equation \eqref{H-01=00hh-01} under some flatness assumptions on  $K(x)$.     In \cite{TZ},  Taliaferro and Zhang further gave conditions on $K(x)$ to characterize the precise behavior of solutions of \eqref{H-01=00hh-01} near an isolated singularity.

When $m=2$,   Eq. \eqref{H-01} is a fourth-order equation with critical Sobolev exponent. One of the motivations to study this  equation arises from the problem of finding a metric which is conformal to the flat metric on $\mathbb{R}^n$ such that $K(x)$ is the fourth-order $Q$ curvature  of the new metric $u^{4/(n-4)}\delta_{ij}$ (see \cite{Bran,Pan}).   For $m\geq 3$, the higher-order equation \eqref{H-01} also arises in the study of similar problem in conformal geometry (see, e.g., \cite{GJMS}).  We refer to  Gursky-Malchiodi \cite{GM} and  Hang-Yang \cite{HY16} for the recent progress of the fourth-order $Q$ curvature problem on compact Riemannian manifolds. 
 In \cite{CHY},   Chang, Hang and Yang have shown that the Hausdorff dimension of singular set $\textmd{dim}_H(\Lambda) < (n-4)/2$ is a necessary condition for the existence of a complete conformal metric whose scalar curvature and fourth-order $Q$ curvature both have a positive lower bound. For higher values $m \geq 3$ and a smooth $k$-dimensional submanifold $\Lambda$, Gonz\'{a}lez-Mazzeo-Sire \cite{GMS} showed  that $\Gamma(\frac{n}{4} - \frac{k}{2} +\frac{m}{2})/\Gamma(\frac{n}{4} - \frac{k}{2} -\frac{m}{2}) >0$ is necessary to have a complete conformal metric under some conditions, which holds in particular when $k < (n-2m)/2$.

The problem of characterizing asymptotic behavior of solutions of  \eqref{H-01} for  $m\geq 2$ is significantly more challenging due to the lack of maximum principle.   Recently, this problem has also  attracted much attention. When $K(x) \equiv 1$ and $\Lambda=\{ 0 \}$ is an isolated singularity in $\Omega$,  Jin and Xiong in \cite{JX19} proved sharp blow up rates and the asymptotic radial symmetry of singular solutions of \eqref{H-01} near the singularity $0$ under the sign assumptions 
\begin{equation}\label{H-01=00hh-033} 
(-\Delta)^s u \geq 0  ~~~~~~ \textmd{in} ~  \Omega\backslash \{0\} ~~~ \textmd{for} ~  \textmd{all} ~  s=1, \dots,  m-1.  
\end{equation}
Based on the a priori estimates of Jin and Xiong,  in the case of $K(x) \equiv 1$,  Andrade-do \'{O} \cite{Ad} and Ratzkin \cite{R20} further proved for $m = 2$ that singular solutions of \eqref{H-01} satisfying  \eqref{H-01=00hh-033}  are  asymptotic to positive singular solutions of $(-\Delta)^2 u= u^{(n+4)/(n-4)}$ on  $\mathbb{R}^n \backslash \{0\}$.  All these singular solutions  on  $\mathbb{R}^n \backslash \{0\}$ have been classified by Frank and K\"{o}nig in \cite{Fr-K19} using ODE analysis according to the radial symmetry result of Lin \cite{Lin98}.  When $K(x) \equiv 1$ and $\Lambda$ is a general singular set with the   upper Minkowski dimension being less than $(n-2m)/2$,  Du and Yang in \cite{DY01} established the following a priori estimate     
\begin{equation}\label{H-01=00hh-044} 
u(x) \leq C [\textmd{dist}(x, \Lambda)]^{-\frac{n-2m}{2}} 
\end{equation} 
for any solution $u$ of \eqref{H-01} satisfying \eqref{H-01=00hh-033} in $\Omega\backslash \Lambda$,  and proved further that $u$ is asymptotically symmetric near $\Lambda$. When $m=2$ and $\Omega=\R^n$, the estimate \eqref{H-01=00hh-044} was obtained in Chang, Han and Yang \cite{CHanY} under the assumption that the metric $u^{4/(n-4)}\delta_{ij}$ has nonnegative scalar curvature.

Our interest in this paper is the situation where the $Q$-curvature $K(x)$ is a non-constant positive function.  Under some conditions on the order of flatness at critical points of $K$ on $\Lambda$,  we will establish the local estimate \eqref{H-01=00hh-044} for any solution of \eqref{H-01} near its singular set $\Lambda$ when the upper Minkowski dimension of $\Lambda$ is  less than $(n-2m)/2$.

We first recall the definition of the Minkowski dimension (see, e.g., \cite{KLV13,Mattila}).   Suppose  $E\subset \mathbb{R}^n$ is a compact set, the $\lambda$-dimensional Minkowski $r$-content of $E$ is defined by 
$$
\mathcal{M}_r^\lambda(E) = \inf \left\{l r^\lambda ~ \big| ~  E \subset \bigcup_{k=1}^l B(x_k, r),  ~ x_k \in E  \right\}, 
$$ 
and the upper and lower Minkowski dimensions are defined, respectively, as 
$$
\overline{\textmd{dim}}_M(E) = \inf \Big\{\lambda \geq 0  ~ \big| ~  \limsup_{r\to 0} \mathcal{M}_r^\lambda(E) =0 \Big\}, 
$$ 
$$
\underline{\textmd{dim}}_M(E) = \inf \Big\{\lambda \geq 0  ~ \big| ~  \liminf_{r\to 0} \mathcal{M}_r^\lambda(E) =0 \Big\}. 
$$
If $\overline{\textmd{dim}}_M(E) = \underline{\textmd{dim}}_M(E) $, then the common value,  denoted by $\textmd{dim}_M(E)$,  is the Minkowski dimension of $E$.  Recall also that for a compact set $E\subset \mathbb{R}^n$, we have the relation $\textmd{dim}_H (E) \leq \underline{\textmd{dim}}_M (E)  \leq \overline{\textmd{dim}}_M (E)$, where $\textmd{dim}_H(E)$ is the Hausdorff dimension of $E$.  
We also  introduce a notation $\mathcal{C}^\alpha(\Omega)$ with $\alpha >0$ which will be used later. 
\begin{definition} ~

\begin{itemize}
\item [$1.$]If $\alpha$ is a positive integer, then $\mathcal{C}^\alpha(\Omega)$ is the usual space $C^\alpha(\Omega)$.
\item [$2.$] If $\alpha > [\alpha]$, then $\mathcal{C}^\alpha(\Omega)$ is the set of all functions $f\in C^{[\alpha]}(\Omega)$ satisfying 
$$
|\nabla^{[\alpha]}f(x) - \nabla^{[\alpha]}f(y)| \leq c(|x-y|) |x-y|^{\alpha - [\alpha]},~~~ x, y \in \Omega, 
$$
where $c(\cdot)$ is a nonnegative continuous function with $c(0)=0$. 
\end{itemize} 
\end{definition}

We will use $B_r(x)$ to denote the open ball of radius $r$ in $\mathbb{R}^n$ with center $x$ and write $B_r(0)$ as $B_r$ for short.   From now on, without loss of generality, we take the domain  $\Omega=B_2$.

Our assumption on the $Q$-curvature $K(x)$ is as follows:  

\begin{itemize}

\item [$1.$] For $n=2m+1$, $K \in \mathcal{C}^{\frac{1}{2}}(B_2)$.  

\item [$2.$] For $n=2m+2$, $K\in C^1(B_2)$. 

\item [$3.$] For $n\geq 2m+3$, $K\in C^1(B_2)$ and one of the following is satisfied: 

\begin{itemize}
\item [(K1)] If $x \in \Lambda$ is a critical point of $K$,  then there exists a neighborhood $N$ of $x$ such that
\begin{equation}\label{K1-0}
c_1 |y - x|^{\alpha-1} \leq |\nabla K(y)| \leq c_2 |y - x|^{\alpha-1}
\end{equation}
and for any $\varepsilon>0$, there exists $\delta=\delta(\varepsilon)$ such that for $|z - y|< \delta|y - x|$ we have
\begin{equation}\label{K1-1}
|\nabla K(z) - \nabla K(y)| < \varepsilon |y -x|^{\alpha - 1}, 
\end{equation}
where $\alpha >1$,   $y, z \in N$,  $c_1$ and $c_2$ are two positive constants.

\item [(K2)] $K\in \mathcal{C}^\alpha(B_2)$ with $\alpha=\frac{n-2m}{2}$.  In addition,  for  $n \geq 2m+4$, if  $x \in \Lambda$ is a critical point of $K$,  then there exists a neighborhood $N$ of $x$ such that 
\begin{equation}\label{K2-1}
|\nabla^i  K(y)| \leq c(|y - x|) |\nabla K(y)|^{\frac{\alpha-i}{\alpha-1}}, ~~~ 2\leq i \leq [\alpha], ~~ y \in N, 
\end{equation}
where $c(\cdot)$ is a nonnegative continuous function satisfying $c(0)=0$. 
\end{itemize} 

\end{itemize}   
Then our main result in this paper is 
\begin{theorem}\label{Thm01} 
Suppose that $1 \leq m < n/2$ and $m$ is an integer. Let $\Lambda\subset B_{1/2}$ be a compact set with the upper Minkowski dimension $\overline{\textmd{dim}}_M(\Lambda)$ (not necessarily an integer),  $\overline{\textmd{dim}}_M(\Lambda) < \frac{n-2m}{2}$,  or $\Lambda\subset B_{1/2}$ be a smooth $k$-dimensional closed manifold with $k\leq \frac{n-2m}{2}$.  Let $K$ satisfy the above assumption and  let $u\in C^{2m}(B_2 \backslash \Lambda)$ be a  solution of 
\begin{equation}\label{HOE} 
(-\Delta)^m u = K(x) u^{\frac{n+2m}{n-2m}} , ~~~ u>0  ~~~~~~   \textmd{in} ~B_2 \backslash \Lambda. 
\end{equation} 
Suppose
\begin{equation}\label{HOE02}   
(-\Delta)^s u \geq 0 ~~~~~~  \textmd{in} ~ B_2 \backslash \Lambda,~~ s=1, \dots, m-1.  
\end{equation} 
Then there exists a constant  $C > 0$ such that
\begin{equation}\label{Est01} 
u(x) \leq C  [\textmd{dist}(x,  \Lambda)]^{-\frac{n-2m}{2}} 
\end{equation} 
for all $x\in B_1\backslash \Lambda$, where $\textmd{dist}(x,  \Lambda)$ is the distance  between $x$ and $\Lambda$. 
\end{theorem}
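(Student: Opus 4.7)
I would run a blow-up contradiction argument, in the style of Chen--Lin \cite{Chen-Lin97}, Zhang \cite{Zhang02} and Du--Yang \cite{DY01}. Suppose \eqref{Est01} fails: there exist points $x_j\in B_1\setminus\La$ with $u(x_j)^{2/(n-2m)}\,\textmd{dist}(x_j,\La)\to\infty$. A standard doubling/selection lemma produces centres $\bar x_j\to \bar x_\infty\in\La$ and scales $\la_j:=u(\bar x_j)^{-2/(n-2m)}$ such that
$$
v_j(y):=\la_j^{(n-2m)/2}u(\bar x_j+\la_j y)
$$
is uniformly bounded on $B_{R_j}$ with $R_j\to\infty$, satisfies $v_j(0)=1$, solves $(-\Delta)^m v_j=K_j(y)v_j^{(n+2m)/(n-2m)}$ with $K_j(y):=K(\bar x_j+\la_j y)$, and fulfils the super-polyharmonic conditions on $B_{R_j}\setminus\La_j$, where $\La_j:=\la_j^{-1}(\La-\bar x_j)$.

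The inequalities \eqref{HOE02} together with Riesz potential representations (as in \cite{JX19,DY01}) give uniform $L^p_{\loc}$ bounds on $(-\Delta)^s v_j$ for $0\le s\le m-1$. The upper Minkowski dimension condition $\overline{\textmd{dim}}_M(\La)<(n-2m)/2$ (respectively $k\le(n-2m)/2$ in the smooth submanifold case) ensures that the rescaled set $\La_j\cap B_R$ has vanishing polyharmonic capacity, so a removable-singularity result for super-polyharmonic functions (cf.\ \cite{DY01}) extends $v_j$ past $\La_j\cap B_R$ up to $C^{2m}$ estimates. Along a subsequence $v_j\to v_\infty$ in $C^{2m}_{\loc}(\R^n)$, with $v_\infty>0$, $v_\infty(0)=1$, super-polyharmonic, and solving $(-\Delta)^m v_\infty=K(\bar x_\infty)v_\infty^{(n+2m)/(n-2m)}$ on all of $\R^n$. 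The Liouville classification of Lin \cite{Lin98} (and its extension to general $m$) then identifies $v_\infty$ with a standard bubble $W_{y_0,\mu}(y)=c_{n,m}\bigl(\mu/(\mu^2+|y-y_0|^2)\bigr)^{(n-2m)/2}$.

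Contradiction comes from the polyharmonic Pohozaev identity applied to $v_j$ on $B_R$ with vector field $y-y_0$: the bulk reduces, up to boundary terms controlled by the $L^p_{\loc}$ bounds on the intermediate Laplacians, to
$$
\frac{n-2m}{2n}\int_{B_R}\nabla K_j(y)\cdot(y-y_0)\,v_j^{\frac{2n}{n-2m}}\,dy.
$$
If $\bar x_\infty$ is not a critical point of $K$, then $\nabla K_j\to\nabla K(\bar x_\infty)\neq 0$ and integrability of the bubble density against the linear weight $y-y_0$ forces a definite sign/size incompatible with the identity. If $\bar x_\infty$ is a critical point, the flatness (K1) or (K2) is tailored exactly so that the rescaled $\la_j\nabla K(\bar x_j+\la_j y)$ converges, in a suitable weighted sense, to a homogeneous profile of degree $\al-1$ with $\al\le(n-2m)/2$; a direct computation against the bubble then shows the limiting Pohozaev integral is strictly nonzero, again a contradiction. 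The low-dimensional cases $n=2m+1$ and $n=2m+2$ are handled similarly using only the $\mathcal{C}^{1/2}$ or $C^1$ regularity of $K$, since then the Pohozaev integrand is absolutely convergent on its own.

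The main obstacle is the critical-point analysis: one must match the scale-invariance of the bubble against the prescribed flatness orders in (K1)/(K2) precisely at the borderline $\al=(n-2m)/2$, and ensure that the boundary terms in the higher-order Pohozaev identity (involving up to $2m-1$ derivatives of $v_j$) are all controlled by the super-polyharmonic estimates coming from \eqref{HOE02}. The upper Minkowski dimension assumption enters a second time here, to guarantee that no mass of the Pohozaev integrand leaks into $\La_j$ after rescaling.
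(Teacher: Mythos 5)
Your blow-up setup (selecting local maximum points $\bar x_j$, rescaling to $v_j$, convergence to a Liouville bubble via the classification in \cite{Lin98} or \cite{CLO06}) matches the paper's Step 1. But the route you take to the contradiction diverges from the paper, and I do not think it works as written. The paper converts the PDE into a local integral equation (Theorem \ref{T-Lo094}, exploiting the sign conditions \eqref{HOE02} through the Green's representation) and then runs the method of moving spheres on that integral equation: constructing the barrier functions $\Phi_\lambda$, $H_\lambda$, $T_\lambda$, using the positivity of the remainder $J_\lambda$ coming from the lower bound $u\ge A_0>0$ off $\Sigma$, and showing the sphere radius can be pushed past $\bar\lambda$. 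This is also what Chen--Lin, Zhang, and Du--Yang do for the a priori estimate; the Pohozaev identity in those references is used \emph{afterwards}, to refine the asymptotics near an isolated singularity once the upper bound is already known, not to establish the bound itself.

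The gap in your Pohozaev argument is concrete. After rescaling, $\nabla K_j(y)=\lambda_j\nabla K(\bar x_j+\lambda_j y)$, so the bulk term
$\frac{n-2m}{2n}\int_{B_R}\nabla K_j(y)\cdot(y-y_0)\,v_j^{2n/(n-2m)}\,dy$
is $O(\lambda_j)$; worse, to leading order $\nabla K_j\approx\lambda_j\nabla K(\bar x_j)$ is constant in $y$ and the limit bubble $W_{y_0,\mu}$ is radial about $y_0$, so $\int_{\R^n}(y-y_0)\,W_{y_0,\mu}^{2n/(n-2m)}\,dy=0$. The quantity you claim has ``a definite sign/size'' is actually zero at leading order, and the putative contradiction with the identity evaporates; extracting a nonzero contribution would require a delicate expansion of $v_j-W_{y_0,\mu}$ and of $\nabla K_j$ to higher order, which is precisely the kind of refined analysis one does \emph{after} the estimate is in hand. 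Similarly, ``boundary terms controlled by $L^p_{\loc}$ bounds'' undersells the difficulty: the polyharmonic Pohozaev boundary integrals on $\partial B_R$ involve pointwise traces of up to $(2m-1)$-st order derivatives, and \eqref{HOE02} alone does not give such pointwise control. Finally, the contradiction hypothesis $d_j^{(n-2m)/2}u(x_j)\to\infty$ must enter the argument somewhere — in the paper it feeds the crucial strictly positive lower bound on $J_\lambda$ (Lemma \ref{hhff=01}) — but in your sketch it plays no visible role in the Pohozaev step. Reworking the contradiction along the paper's moving-sphere lines (or a genuinely quantitative, higher-order Pohozaev expansion) is needed to close the proof.
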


\begin{remark}\label{Re-mama=01}
For $n \geq 2m+4$, if we only assume $K \in C^1(B_2)$, then Theorem \ref{Thm01} does not hold.  Indeed,  when $n \geq 2m+4$ and $\Lambda=\{0\}$,   Du and Yang in \cite{DY02} have shown the existence of $K\in C^1(B_2)$ such that Eq. \eqref{HOE}  has a $C^{2m}$  solution $u(x)$ which satisfies \eqref{HOE02}  but does not satisfy \eqref{Est01},  and can even be constructed to be arbitrarily large near its singularity $0$.  Such large singular solution of  \eqref{HOE} extends a similar result of Taliaferro \cite{T} for $m=1$.  On the other hand,  Theorem \ref{Thm01} also indicates that Theorem 1.3 of \cite{DY02} is not true in dimension $n=2m+1$ or $n=2m+2$ for  $m\geq 2$. 
\end{remark} 
\begin{remark}\label{Re-mama=02}
When $m = 2$,  a solution $u$ of \eqref{HOE}  defines  a conformal metric $g_{ij}=u^{\frac{4}{n-4}} \delta_{ij}$ which has $Q$-curvature $\frac{2}{n-4}K(x)$.  Assuming  that the scalar curvature of $g_{ij}$ is positive,  then one can obtain the sign condition $-\Delta u >0$. We also mention that under the assumption of positive scalar curvature, Gursky and Malchiodi in \cite{GM} studied the positivity of the Paneitz operator and its Green's function. 
\end{remark}  
\begin{remark}\label{Re-mama=03}
The assumption (K2) also covers the situation of $K(x) \equiv 1$.  In this case,  Mazzeo and Pacard in \cite{MP}  have constructed singular solutions of \eqref{HOE} for $m=1$ when  $\Lambda$ is a  smooth submanifold of dimension $k \leq (n-2)/2$,  and Hyder-Sire \cite{HS} recently proved for $m=2$ that Eq. \eqref{HOE} has singular solutions which satisfy \eqref{HOE02}  when $\Lambda$ is a  smooth submanifold of dimension $k < (n-4)/2$. 
\end{remark} 

As mentioned earlier,  Eq. \eqref{HOE}  is more challenging for $m\geq 2$ to study since the maximum principle is lacking. Notice also that the sign conditions \eqref{HOE02} may change when performing the Kelvin transformation. Thus, even under the assumption \eqref{HOE02},  it is still difficult to apply the method of moving planes  or moving spheres  directly to the local differential equation \eqref{HOE}.  Inspired by the work of Jin-Li-Xiong  \cite{JLX17,JX19},  we will rewrite the differential equation \eqref{HOE} into the local integral equation \eqref{Int} below and derive local estimates of singular solutions to this integral equation. Similar idea has also been used in \cite{DY01} to study the case when $K(x)$ is identically a positive constant  and the singular set $\Lambda$ is not isolated.

Suppose the dimension $n \geq 1$, $0< \sigma < \frac{n}{2}$ is a real number, and $\Sigma$ is a closed set in  $\mathbb{R}^n$.  We consider the local integral equation 
\begin{equation}\label{Int}
u(x) =\int_{B_2} \frac{ K(y) u(y)^{\frac{n+2\sigma}{n-2\sigma}} }{|x-y|^{n-2\sigma}} dy + h(x),~~~ u(x) >0, ~~~~~ x \in B_2 \backslash \Sigma, 
\end{equation}
where $u \in L^{\frac{n+2\sigma}{n-2\sigma}}(B_2) \cap C(B_2 \backslash \Sigma)$ and $ h\in C^1(B_2)$ is a positive function.  Under the assumptions in Theorem \ref{Thm01}, by a similar argument as in \cite{DY01}  we can show $u\in L_{\textmd{loc}}^{\frac{n+2\sigma}{n-2\sigma}}(B_2) $ and  can rewrite the equation \eqref{HOE}  locally  into the integral equation \eqref{Int} after some scaling (see Theorem \ref{T-Lo094} in the next section).

Next we state the corresponding result for singular solutions of the integral equation \eqref{Int}.  Denote $\mathcal{L}^n$  the $n$-dimensional Lebesgue measure on $\mathbb{R}^n$.   We assume that  the positive $Q$-curvature function $K(x)$ satisfies the one of the following: 
\begin{itemize}
\item [(K1)] $K\in C^1(B_2)$. If $x \in \Sigma$ is a critical point of $K$,  then there exists a neighborhood $N$ of $x$ such that
\begin{equation}\label{Khy1-0}
c_1 |y - x|^{\alpha-1} \leq |\nabla K(y)| \leq c_2 |y - x|^{\alpha-1}
\end{equation}
and for any $\varepsilon>0$, there exists $\delta=\delta(\varepsilon)$ such that for $|z - y|< \delta|y - x|$ we have
\begin{equation}\label{Khy1-1}
|\nabla K(z) - \nabla K(y)| < \varepsilon |y -x|^{\alpha - 1}, 
\end{equation}
where $\alpha >1$,   $y, z \in N$,  $c_1$ and $c_2$ are two positive constants.

\item [(K2)] $K\in \mathcal{C}^\alpha(B_2)$ with $\alpha=\frac{n-2\sigma}{2}$.  In addition,  for  $n \geq 2\sigma +4$, if  $x \in \Sigma$ is a critical point of $K$,  then there exists a neighborhood $N$ of $x$ such that 
\begin{equation}\label{Khy2-1}
|\nabla^i  K(y)| \leq c(|y - x|) |\nabla K(y)|^{\frac{\alpha-i}{\alpha-1}}, ~~~ 2\leq i  \leq [\alpha], ~~ y \in N, 
\end{equation}
where $c(\cdot)$ is a nonnegative continuous function satisfying $c(0)=0$. 
\end{itemize}

\begin{theorem}\label{IEthm01}
Suppose $n \geq 2$, $ 1 \leq  \sigma < n/2$, and $\Sigma$ is a closed set in  $\mathbb{R}^n$ with $\mathcal{L}^n (\Sigma)=0$.  Suppose that $K(x)$ satisfies the assumption (K1) or (K2) above.   Let $h\in C^1(B_2)$ be a positive function and  $u \in L^{\frac{n+2\sigma}{n-2\sigma}}(B_2) \cap C(B_2 \backslash \Sigma)$  be a positive solution of \eqref{Int}. Then there exists a constant $C>0$ such that 
\begin{equation}\label{In-Es01}
u(x) \leq C [\textmd{dist}(x, \Sigma)]^{-\frac{n-2\sigma}{2}}
\end{equation}
for all $x\in B_1\backslash \Sigma$, where  $\textmd{dist}(x,  \Sigma)$ is the distance  between $x$ and $\Sigma$.  
\end{theorem}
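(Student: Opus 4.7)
The plan is to argue by contradiction via a blow-up and rescaling analysis, combined with a moving-spheres or Pohozaev-type argument whose success is controlled by the flatness of $K$.

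First, suppose the estimate \eqref{In-Es01} fails. Applying a standard doubling/selection lemma to the scale-invariant quantity $u(x)\,[\textmd{dist}(x,\Sigma)]^{(n-2\sigma)/2}$, one extracts a sequence $\bar{x}_k\in B_1\setminus\Sigma$ such that $M_k:=u(\bar{x}_k)\to\infty$, $\textmd{dist}(\bar{x}_k,\Sigma)\to 0$, and $u(y)\leq 2^{(n-2\sigma)/2}M_k$ throughout $B_{R_k\mu_k}(\bar{x}_k)$, where $\mu_k:=M_k^{-2/(n-2\sigma)}$ and $R_k\to\infty$. The critical scaling invariance forces $\mu_k\ll\textmd{dist}(\bar{x}_k,\Sigma)$, so the doubling ball stays safely away from $\Sigma$. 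After passing to a subsequence, $\bar{x}_k\to x_\infty\in\Sigma$.

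Next I would perform the critical rescaling $v_k(y):=\mu_k^{(n-2\sigma)/2}\,u(\bar{x}_k+\mu_k y)$. A direct change of variables in \eqref{Int} yields
\begin{equation*}
v_k(y) = \int_{(B_2-\bar{x}_k)/\mu_k} \frac{K(\bar{x}_k+\mu_k w)\,v_k(w)^{(n+2\sigma)/(n-2\sigma)}}{|y-w|^{n-2\sigma}}\,dw + \mu_k^{(n-2\sigma)/2}\,h(\bar{x}_k+\mu_k y),
\end{equation*}
on the expanding domain $(B_2-\bar{x}_k)/\mu_k$, with the $h$-contribution of order $O(\mu_k^{(n-2\sigma)/2})=o(1)$. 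Since $v_k(0)=1$ and the doubling step gives a uniform local upper bound, Riesz-potential regularity supplies equicontinuity on compact sets, and along a subsequence $v_k\to v_\infty$ locally uniformly. The limit $v_\infty$ is a positive solution of the constant-coefficient equation with $K\equiv K(x_\infty)$ on all of $\mathbb{R}^n$, so by the Chen-Li-Ou classification $v_\infty$ must be a standard bubble.

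The contradiction should then come from applying a local moving-spheres argument (or an equivalent Pohozaev-type identity) to $u$ on an intermediate ball $B_{r_k}(\bar{x}_k)$ with $\mu_k\ll r_k\ll\textmd{dist}(\bar{x}_k,\Sigma)$. The leading-order balance involves an interior term of the form $\int \nabla K(y)\cdot(y-c_k)\,u(y)^{2n/(n-2\sigma)}\,dy$ for a suitable expansion center $c_k$, compared against boundary terms evaluated from the bubble profile $v_\infty$ after rescaling. Under assumption (K1), the two-sided bound \eqref{Khy1-0} on $|\nabla K|$ forces the interior term to dominate the boundary contribution with an incompatible sign. Under (K2), the exponent $\alpha=(n-2\sigma)/2$ matches the bubble decay exactly, and the auxiliary derivative estimates \eqref{Khy2-1}, together with the condition $c(0)=0$, allow all higher-order contributions to be absorbed into the leading gradient term, again producing the required contradiction.

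The principal obstacle is the careful bookkeeping of remainder terms in the nonlocal moving-spheres/Pohozaev analysis for general (possibly non-integer) $\sigma$; here I would rely on the localized integral-equation framework developed in \cite{JLX17,JX19}. A secondary technical point is the case when $x_\infty\in\Sigma$ is not a critical point of $K$: the flatness assumptions are vacuous in that regime, and one instead uses $\nabla K(x_\infty)\neq 0$ directly in the balance, which produces a leading-order linear term that is still incompatible with the bubble limit.
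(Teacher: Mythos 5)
Your outline correctly identifies the contradiction-plus-blowup skeleton that the paper uses: select points by a doubling lemma, rescale, pass to a Chen--Li--Ou bubble, and then derive a contradiction from the behavior of $K$. However, the part that carries essentially all the weight of the theorem is left at the level of a wish. You write that ``the contradiction should then come from applying a local moving-spheres argument (or an equivalent Pohozaev-type identity)'' and that ``the leading-order balance involves an interior term $\int\nabla K\cdot(y-c_k)u^{2n/(n-2\sigma)}$ compared against boundary terms,'' but for the \emph{integral} equation \eqref{Int} there is no off-the-shelf Pohozaev identity or local boundary term: one is forced to work directly with the kernel $G(0,\lambda;\xi,z)$ in \eqref{Kel=0620-2021} and its inequalities (Lemma \ref{InK-01}), and the entire content of the proof is the quantitative control of three quantities that you never name. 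Specifically, (i) the nonlocal error $\Phi_\lambda$ in \eqref{SL4-312=02} generated by $K(z^\lambda)-K(z)$ must be shown to be \emph{negative with an explicit rate} (Lemma \ref{LS4-312=01}/\ref{L-swjhdg-010}), (ii) the remainder $J_\lambda$ from the part of the domain outside $\Pi_j$ must have an explicit positive lower bound $\gtrsim u(x_j)^{-1}$ (Lemma \ref{hhff=01}), which is what pushes the sphere past the critical radius $\bar\lambda$, and (iii) depending on whether $\alpha<2\sigma$, $\alpha\geq(n-2\sigma)/2$, or $2\sigma\leq\alpha<(n-2\sigma)/2$, one needs a \emph{different} auxiliary comparison function $H_\lambda$ (including the extra correction $T_\lambda$ in \eqref{Tiyyt=20210624} that absorbs the far-field part $\Phi_{2,\lambda}$), none of which appears in your sketch.

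A second, independent gap: under (K2) your claim that the ``auxiliary derivative estimates allow all higher-order contributions to be absorbed'' skips the key preliminary step. The paper first has to prove Lemma \ref{StepL5-401=01}, namely that $|\nabla K(x_j)|^{1/(\alpha-1)}u(x_j)^{2/(n-2\sigma)}\to 0$, and this is itself a full moving-spheres argument (essentially rerunning Step~2 at the scale $\tilde l_j=Z_j^{1/(\alpha-1)}M_j^{2/(n-2\sigma)}$). Without this auxiliary decay, the estimate \eqref{SL5-329=04} on $Q_\lambda$ fails and the barrier $H_\lambda$ cannot be made nonpositive. Finally, you treat the non-critical-point case as a ``secondary technical point,'' but in the paper it is Step~2, an essential and self-contained moving-spheres argument showing $\nabla K(0)=0$, on which Step~3 is built. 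In short: the selection-and-blowup part is right, but the moving-sphere machinery -- which is where the argument actually lives, especially the sign of $\Phi_\lambda$, the lower bound on $J_\lambda$, the barrier $H_\lambda$, the $\alpha$-case split, and the preliminary decay lemma under (K2) -- is missing.
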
 

\begin{remark}\label{Re-mama=04}
For the integral equation \eqref{Int},  we only assume that the singular set $\Sigma$ has $n$-dimensional Lebesgue measure zero. This is a weaker condition than singular set of Newtonian capacity zero, which is used in \cite{Chen-Lin95,Chen-Lin97,Zhang02} to study the second-order scalar curvature equation \eqref{H-01=00hh-01}.  In particular, if one obtains a solution $u \in C^{2m}(B_2 \backslash \Lambda) \cap L^{\frac{n+ 2m}{n - 2m}}(B_2)$ to \eqref{HOE} satisfying \eqref{HOE02} by whatever method with $\mathcal{L}^n(\Lambda)=0$,  then $u$ is a distributional solution in  $B_2$ and thus Theorem \ref{IEthm01} can give the local estimate \eqref{Est01} of $u$ near $\Lambda$ even though the Newtonian capacity of $\Lambda$ might be greater than $0$.  For example, Pacard \cite{P} have constructed solutions with such high dimensional singular set to \eqref{H-01=00hh-01} when $K(x)\equiv 1$ in dimension $4$ and in dimension $6$. 
\end{remark}  

\begin{remark}\label{Re+Ma=0620}
In Theorem \ref{IEthm01}, we assume that the order of the integral equation \eqref{Int} satisfies $1 \leq \sigma < n/2$ which is sufficient to apply to higher order $Q$-curvature  equation \eqref{H-01}. When $0 < \sigma <1$,  equation \eqref{Int} is closely related to the fractional Nirenberg problem (see \cite{JLX17}). For the fractional case $0< \sigma< 1$, our proof of Theorem \ref{IEthm01} encounters a difficulty due to the more singular properties of the integral kernel $G(0, \lambda; \xi, z)$ defined in \eqref{Kel=0620-2021}.  More specifically, the negativity of $\Phi_\lambda$ in \eqref{SL4-312=02} is very important when using the moving sphere method, but in the case of  $0< \sigma< 1$, the negative part of the integral in  \eqref{SL4-312=02}  cannot control the positive parts according to the estimates of $G(0, \lambda; \xi, z)$ in Lemma \ref{InK-01}. We plan to deal with this difficulty in future work.  We also mention that fractional order critical equations with singularities in the case $K(x) \equiv 1$  have been studied in \cite{ACDFGW,ADGW,CJSX,DG,DPGW,JQSX} and so on. 
\end{remark} 

We will prove Theorem \ref{IEthm01} in the spirit of the works of  Chen, Lin, Taliaferro and Zhang \cite{Chen-Lin97,Lin00,TZ,Zhang02} by using the moving sphere method of an integral form introduced by Li\cite{Li04},  which is inspired by Li-Zhu \cite{Li-Zhu} and Li-Zhang \cite{LZ03}.  One difference is that the authors of \cite{Chen-Lin97,Lin00,TZ,Zhang02} dealt directly with the second-order equation \eqref{H-01=00hh-01}, while we work with the integral equation \eqref{Int} by exploring its various specific features.  Thus we need some analysis techniques to overcome the difficulties caused by the absence of a maximum principle. Another difference is the non-locality of the integral equation \eqref{Int}.     

The rest of this paper is organized as follows. In Section \ref{S2}, we show the integral representation for  singular positive solutions to  the differential equation \eqref{HOE}. In Section \ref{Sw46}, we provide some preliminary estimates for the integral kernel involved in the moving sphere method of integral form.     In Section \ref{S3000},  we give a detailed proof of Theorem \ref{IEthm01} under the assumption (K1).  In Section \ref{S5000},  we present the proof of Theorem \ref{IEthm01} under the assumption (K2). Finally, Theorem \ref{Thm01} follows from Theorem \ref{IEthm01} and Theorem \ref{T-Lo094} via the covering and rescaling arguments.

\section{An integral representation for singular solutions}\label{S2}

In this section, we show that every singular positive solution of the differential equation \eqref{HOE} satisfies the integral equation \eqref{Int} in some local sense under suitable assumptions.   We first prove that under the assumptions of Theorem \ref{Thm01} (\eqref{HOE02} is not needed here), $u \in L_{\textmd{loc}}^{\frac{n + 2m}{n - 2m}}(B_2) $ and  $u$ is a distributional solution in the entire ball $B_2$. 

\begin{proposition}\label{P201}
Suppose that $1 \leq m < n/2$ and $m$ is an integer. Let $\Lambda\subset B_{1/2}$ be a compact set with the upper Minkowski dimension $\overline{\textmd{dim}}_M(\Lambda)$ (not necessarily an integer),  $\overline{\textmd{dim}}_M(\Lambda) < \frac{n-2m}{2}$,  or $\Lambda\subset B_{1/2}$ be a smooth $k$-dimensional closed manifold with $k\leq \frac{n-2m}{2}$. Let $K$ be a positive continuous function on $\overline{B}_2$ and  let $u\in C^{2m}(\overline{B}_2 \backslash \Lambda)$ be a positive solution of \eqref{HOE}. Then $u \in L^{\frac{n+ 2m}{n - 2m}}(B_2) $ and $u$ is a distributional solution in the entire ball $B_2$, i.e., we have 
\begin{equation}\label{Dis2}
\int_{B_2} u (-\Delta)^m \varphi  dx = \int_{B_2} K u^{\frac{n+2m}{n-2m}} \varphi  dx
\end{equation}
for every  $\varphi \in C_c^\infty(B_2)$. 
\end{proposition}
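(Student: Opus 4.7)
The plan is to carry out two steps: (i) establish $u \in L^p_{\loc}(B_2)$ with $p = \frac{n+2m}{n-2m}$ by a test-function argument that exploits the bound $\overline{\textmd{dim}}_M(\Lambda) < (n-2m)/2$, and (ii) deduce the distributional identity \eqref{Dis2} by passing to the limit in the pointwise equation on $B_2 \backslash \Lambda$.

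For step (i), I would fix $\lambda$ with $\overline{\textmd{dim}}_M(\Lambda) < \lambda < \frac{n-2m}{2}$ (or $\lambda = k$ in the smooth manifold case), so that for every small $r > 0$ the set $\Lambda$ is covered by $N_r$ balls of radius $r$ with $N_r r^\lambda \to 0$. From this covering I would patch together a smooth cutoff $\eta_r \in C_c^\infty(B_{3/2})$ that is $0$ on the $r$-neighborhood $\Lambda_r$, identically $1$ on $B_1 \backslash \Lambda_{2r}$, and satisfies $|D^k \eta_r| \le C r^{-k}$ for $0 \le k \le 2m$. Fix an integer $L \ge (n+2m)/2$. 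Since $\eta_r^L$ and all its derivatives up to order $L - 1 \ge 2m$ vanish near $\Lambda$ and near $\partial B_{3/2}$, iterating integration by parts against the pointwise equation \eqref{HOE} on $B_2 \backslash \Lambda$ (no sign condition needed) yields the clean identity
\[
\int K u^p \eta_r^L \, dx = \int u\, (-\Delta)^m \eta_r^L \, dx.
\]
The product rule, combined with $\eta_r \in [0,1]$, gives $|(-\Delta)^m \eta_r^L| \le C r^{-2m} \eta_r^{L-2m}$ on the transition shell $\Lambda_{2r} \backslash \Lambda_r$ plus a uniformly bounded contribution supported in $B_{3/2} \backslash B_1$; since $\Lambda \subset B_{1/2}$ and $u \in C^{2m}(\overline{B}_2 \backslash \Lambda)$, $u$ is uniformly bounded on the latter annulus, so this outer contribution to the right-hand side is at most some constant $C_0$. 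Applying Hölder with exponents $p, \frac{p}{p-1}$ to the inner contribution gives
\[
r^{-2m} \int u \, \eta_r^{L-2m} \, dx \le C r^{-2m} \Big( \int u^p \eta_r^L \, dx \Big)^{1/p} |\Lambda_{2r}|^{(p-1)/p},
\]
where the specific choice $L \ge (n+2m)/2$ is forced by the requirement $L - 2m - L/p \ge 0$, so that no negative power of $\eta_r$ appears. The covering gives $|\Lambda_{2r}| \le C N_r r^n = o(r^{n-\lambda})$, whence $r^{-2m} |\Lambda_{2r}|^{(p-1)/p} = o(r^\beta)$ with $\beta = \frac{2m(n - 2m - 2\lambda)}{n+2m} > 0$. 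Writing $I_r := \int K u^p \eta_r^L\, dx$, which is finite because $\mathrm{supp}\, \eta_r^L$ is compactly contained in $B_2 \backslash \Lambda$ where $u$ is bounded, we obtain
\[
I_r \le C_0 + o(r^\beta)\, I_r^{1/p}.
\]
Young's inequality absorbs $o(r^\beta) I_r^{1/p}$ into $\frac{1}{2} I_r$, giving $I_r \le C$ uniformly in $r$; monotone convergence as $r \to 0$ then yields $u \in L^p(B_1)$, and combining with $u \in L^\infty(B_2 \backslash B_{3/4})$ produces $u \in L^p(B_2)$.

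For step (ii), given $\varphi \in C_c^\infty(B_2)$, testing the pointwise equation on $B_2 \backslash \Lambda$ against $\eta_r \varphi$ and integrating by parts (again no boundary contributions, since $\eta_r \varphi$ vanishes to infinite order near $\Lambda$) gives
\[
\int K u^p \eta_r \varphi \, dx = \int u\, \eta_r (-\Delta)^m \varphi \, dx + \int u\, [(-\Delta)^m, \eta_r]\varphi \, dx.
\]
The first two terms converge to $\int K u^p \varphi\, dx$ and $\int u (-\Delta)^m \varphi\, dx$ by dominated convergence, since $u, u^p \in L^1(B_2)$ by step (i). The commutator $[(-\Delta)^m, \eta_r]\varphi$ is supported in $\Lambda_{2r} \backslash \Lambda_r$ and involves only derivatives of $\eta_r$ of order at least $1$, so it is dominated pointwise by $C \|\varphi\|_{C^{2m}} r^{-2m}$ there. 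Hence
\[
\Big| \int u\,[(-\Delta)^m, \eta_r]\varphi \, dx \Big| \le C r^{-2m} \int_{\Lambda_{2r}} u \, dx \le C r^{-2m} |\Lambda_{2r}|^{(p-1)/p} \|u\|_{L^p(\Lambda_{2r})} = o(r^\beta) \|u\|_{L^p(\Lambda_{2r})} \to 0,
\]
since $\|u\|_{L^p(\Lambda_{2r})} \to 0$ from $u \in L^p$ and $|\Lambda_{2r}| \to 0$. Passing to the limit gives \eqref{Dis2}.

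\textbf{Main obstacle.} The delicate point is the absorption in step (i): without the sign conditions \eqref{HOE02}, no higher-order maximum principle is available to produce an independent pointwise a priori bound on $u$, so the $L^p$ integrability must be extracted purely from the test-function inequality. This is precisely why the dimension threshold $(n-2m)/2$ arises: it is the exponent for which $2m$ derivatives of the Laplacian balance the Hölder trade-off between the $L^p$-norm of $u$ and the covering volume $|\Lambda_{2r}|$, and the strict inequality $\overline{\textmd{dim}}_M(\Lambda) < (n-2m)/2$ provides the positive gain $\beta > 0$ that drives the absorption. In the borderline smooth-manifold case $k = (n-2m)/2$ one instead uses $|\Lambda_{2r}| \le C r^{n-k}$ directly together with Young's inequality with a small parameter.
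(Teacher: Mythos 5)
Your test-function argument is essentially the standard approach; the paper omits the proof and defers to \cite[Prop.\ 2.1]{DY01}, but the mechanism there is the same capacity-type estimate. Step (i) is correct: the choice $L\ge \frac{n+2m}{2}$ is exactly what makes $(L-2m)p\ge L$, so that H\"older never produces a negative power of $\eta_r$; the Minkowski bound $|\Lambda_{2r}|=o(r^{n-\lambda})$ gives $r^{-2m}|\Lambda_{2r}|^{(p-1)/p}=o(r^\beta)$ with $\beta=\frac{2m(n-2m-2\lambda)}{n+2m}>0$; the absorption then bounds $I_r$ uniformly, and Fatou (not monotone convergence — $\eta_r^L$ need not be monotone in $r$) gives $u^p\in L^1(B_1)$. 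The outer annulus is handled by continuity of $u$ on $\overline B_2\setminus B_{3/4}$.

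There is one genuine, though easily repaired, slip in step (ii). You re-use the cutoff $\eta_r$ of step (i), which you built with an outer cutoff supported in $B_{3/2}$: write $\eta_r=\psi\,\chi_r$ with $\psi$ the fixed outer cutoff and $\chi_r$ the inner one. Then the commutator $[(-\Delta)^m,\eta_r]\varphi$ is supported not only in $\Lambda_{2r}\setminus\Lambda_r$ (where your $o(r^\beta)$ estimate kills it) but also in the fixed annulus where $\nabla\psi\neq0$; that outer piece is independent of $r$ and does \emph{not} vanish. What your limit actually produces is $\int_{B_2}Ku^p\psi\varphi\,dx=\int_{B_2}u(-\Delta)^m(\psi\varphi)\,dx$, i.e.\ the identity for test functions of the form $\psi\varphi$, not for an arbitrary $\varphi\in C_c^\infty(B_2)$. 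The fix is to drop the outer cutoff in step (ii): since $\Lambda\subset B_{1/2}$ and $\varphi$ already has compact support in $B_2$, the inner cutoff $\chi_r$ alone (equal to $0$ on $\Lambda_r$, to $1$ outside $\Lambda_{2r}$) makes $\chi_r\varphi\in C_c^\infty(B_2\setminus\Lambda)$ for $r$ small; with it the commutator lives only near $\Lambda$ and your estimate, together with $u\in L^1(B_2)$ and dominated convergence, gives \eqref{Dis2}. With that adjustment the proof is complete and matches the intended argument.
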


\begin{proof}The proof is very similar to that of \cite[Proposition 2.1]{DY01}, so we omit the details. See also some related arguments in \cite{AGHW,Y}. 
\end{proof}  

Suppose $n >2m$. Let  $G_m(x, y)$ be the Green function of $(-\Delta)^m$ on $B_2$ under the Navier
boundary condition: 
\begin{equation}\label{green}
\begin{cases}
(-\Delta)^m G_m(x, \cdot) =\delta_x ~~ & \textmd{in }~ B_2, \\
G_m(x, \cdot)=-\Delta G_m(x, \cdot) = \cdots =(-\Delta)^{m-1} G_m(x, \cdot) =0 ~~ & \textmd{on}~ \partial B_2,
\end{cases}
\end{equation}
where $\delta_x$ is the Dirac measure to the point  $x \in B_2$. 
Then,  for any $u\in C^{2m}(B_2) \cap C^{2m-2}(\overline{B}_2)$ we have 
\begin{equation}\label{Gr-0578}
u(x)=\int_{B_2} G_m(x, y) (-\Delta)^m u(y) dy + \sum_{i=1}^m  \int_{\partial B_2} H_i(x, y) (-\Delta)^{i-1}u(y) dS_y, 
\end{equation}
where 
$$
H_i(x,y)= - \frac{\partial}{\partial\nu_y} (-\Delta_y)^{m-i} G_m(x, y)~~~~ \textmd{for} ~ x\in B_2, y\in \partial B_2. 
$$ 
In particular, taking $u\equiv1$ in \eqref{Gr-0578} yields  
\begin{equation}\label{Gn-0798}
\int_{\partial B_2} H_1(x,y) d S_y=1~~~~~~~ \textmd{for} ~ \textmd{all} ~ x\in B_2.
\end{equation}
Moreover, a direct computation gives 
\begin{equation}\label{GrFu}
G_m(x,y)=c_{n,m} |x-y|^{2m-n} + A_m(x, y),
\end{equation}
 $c_{n,m}=\frac{\Gamma(\frac{n}{2} - m)}{2^{2m} \pi^{n/2} \Gamma(m)}$, $A_m(x, y)$ is smooth in $B_2 \times B_2$, and 
\begin{equation}\label{GrFu09}
H_i(x, y) \geq 0,~~~~~~ i=1, \dots, m. 
\end{equation}

\begin{proposition}\label{P202}
Under the  assumptions of  Proposition \ref{P201}, we have  
\begin{equation}\label{InRLo}
u(x)=\int_{B_2} G_m(x, y) K(y) u(y)^{\frac{n+2m}{n-2m}} dy + \sum_{i=1}^m  \int_{\partial B_2} H_i(x, y) (-\Delta)^{i-1}u(y) dS_y
\end{equation}
for all $x\in B_2 \backslash \Lambda$. 
\end{proposition}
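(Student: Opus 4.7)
The plan is to define the right-hand side of \eqref{InRLo} as a function $v(x)$ on $B_2$ and prove $u \equiv v$ on $B_2 \setminus \Lambda$ by reducing the question to uniqueness for the homogeneous polyharmonic Navier problem. Concretely, I would set
\[
v(x) := \int_{B_2} G_m(x,y) K(y) u(y)^{\frac{n+2m}{n-2m}}\, dy + \sum_{i=1}^m \int_{\partial B_2} H_i(x,y) (-\Delta)^{i-1} u(y)\, dS_y.
\]
By Proposition \ref{P201}, $u \in L^{\frac{n+2m}{n-2m}}(B_2)$, so the decomposition \eqref{GrFu} together with the local integrability of $|x-y|^{2m-n}$ (which uses $m<n/2$) makes the volume integral converge absolutely for every $x\in B_2$. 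The boundary integrals are well defined since $\Lambda\subset B_{1/2}$ forces $u$ to be classically $C^{2m}$ in a neighborhood of $\partial B_2$.

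First I would verify the distributional identity $(-\Delta)^m v = K u^{\frac{n+2m}{n-2m}}$ on $B_2$: testing against any $\varphi\in C_c^\infty(B_2)$ and applying Fubini together with the defining property \eqref{green} of $G_m$ reproduces $\int_{B_2} K u^{\frac{n+2m}{n-2m}} \varphi\, dx$ from the volume part, while each $H_i(\cdot,y)$ is annihilated by $(-\Delta)^m_x$ inside $B_2$ by construction. Second, I would check that $v$ carries the correct Navier boundary data: symmetry of $G_m$ together with the boundary condition in \eqref{green} gives $(-\Delta)^{i-1}_x$ of the volume integral equal to zero on $\partial B_2$ for $i=1,\dots,m$, while the kernels $H_i$ are designed precisely so that the $j$-th iterated Laplacian of the $i$-th boundary layer recovers $\delta_{ij}$ times the boundary density. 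Hence $(-\Delta)^{i-1} v = (-\Delta)^{i-1} u$ on $\partial B_2$ for every $i$.

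Combining the previous step with \eqref{Dis2} from Proposition \ref{P201} shows that $w := u - v$ is a distributional solution of $(-\Delta)^m w = 0$ on $B_2$. By iterated elliptic regularity, $w$ admits a smooth (indeed analytic) representative on $B_2$. Because $\Lambda\subset B_{1/2}$, both $u$ and $v$ are classically $C^{2m}$ in a neighborhood of $\partial B_2$, so $w$ extends smoothly up to $\overline{B_2}$ and carries the homogeneous Navier data $(-\Delta)^{i-1} w|_{\partial B_2} = 0$ for $i=1,\dots,m$. Setting $w_i := (-\Delta)^{i-1} w$, the function $w_m$ is harmonic in $B_2$ with $w_m=0$ on $\partial B_2$, so $w_m\equiv 0$ by the maximum principle; iterating downward through $-\Delta w_i = w_{i+1}$ forces $w\equiv 0$ on $B_2$ and in particular on $B_2\setminus\Lambda$, giving \eqref{InRLo}.

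The only genuinely delicate point is the distributional identity in the first step, namely that the polyharmonic Newtonian potential $\int_{B_2} G_m(x,y) f(y)\,dy$ with $f\in L^1(B_2)$ satisfies $(-\Delta)^m = f$ in $\mathcal{D}'(B_2)$. Using the splitting \eqref{GrFu} this reduces to the classical Riesz-potential statement for the kernel $|x-y|^{2m-n}$, while the remaining smooth piece $A_m(x,y)$ contributes a function annihilated by $(-\Delta)^m_x$. Once this potential-theoretic computation is in place, the rest of the argument is a short uniqueness argument based on iterated scalar maximum principles.
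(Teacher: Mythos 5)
Your proposal is correct and follows the same strategy as the paper's proof: define $v$ by the right-hand side of \eqref{InRLo}, observe that $w := u - v$ is a distributional solution of $(-\Delta)^m w = 0$ in $B_2$ by Proposition~\ref{P201} and the defining property of the Green kernel, invoke regularity of polyharmonic distributions to make $w$ smooth, check that $w$ carries homogeneous Navier data on $\partial B_2$, and conclude $w\equiv 0$. You simply spell out two steps that the paper leaves implicit --- the distributional verification $(-\Delta)^m v = K u^{\frac{n+2m}{n-2m}}$ and the boundary-data matching via the structure of the kernels $H_i$ --- and replace the paper's one-line uniqueness appeal with the explicit iterated scalar maximum-principle argument, but the route is the same.
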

\begin{proof}
For any $x\in B_2 \backslash \Lambda$,  we define 
$$
v(x) = \int_{B_2} G_m(x, y) K(y) u(y)^{\frac{n+2m}{n-2m}} dy + \sum_{i=1}^m  \int_{\partial B_2} H_i(x, y) (-\Delta)^{i-1}u(y) dS_y. 
$$
Because  $u(y)^{\frac{n+2m}{n-2m}} \in L^1(B_2)$ and the Riesz potential $|x|^{2m-n}$ is weak type $\left( 1, \frac{n}{n-2m} \right)$, we obtain $v\in L^{\frac{n}{n-2m}}_{weak}(B_2) \cap L^1(B_2)$.  Define  $w=u-v$. By Proposition \ref{P201} we have that $w$ satisfies 
$$
(-\Delta)^m w=0 ~~~~~ \textmd{in} ~ B_2
$$
in the distributional sense, i.e., for any $\varphi\in C_c^\infty(B_2)$,
$$
\int_{B_2} w (-\Delta)^m \varphi dx =0. 
$$
It follows from the regularity for polyharmonic functions (see, e.g., Mitrea \cite{Dis-book}) that $w$ is smooth and satisfies  $(-\Delta)^m w=0$   in  $B_2$.  Note that $w=-\Delta w = \cdots =
(-\Delta)^{m-1} w =0$ on $\partial B_2$, hence $w\equiv 0$.  This implies that $u=v$ in $B_2 \backslash \Lambda$.   
\end{proof}

Now we show that $u$ satisfies the integral equation \eqref{Int} in some local sense under the additional assumption \eqref{HOE02}.      
\begin{theorem}\label{T-Lo094}
Suppose that $1 \leq m < n/2$ and $m$ is an integer. Let $\Lambda\subset B_{1/2}$ be a compact set with the upper Minkowski dimension $\overline{\textmd{dim}}_M(\Lambda)$ (not necessarily an integer),  $\overline{\textmd{dim}}_M(\Lambda) < \frac{n-2m}{2}$,  or $\Lambda\subset B_{1/2}$ be a smooth $k$-dimensional closed manifold with $k\leq \frac{n-2m}{2}$.  Let $K$ be a positive  continuous function on $\overline{B}_2$ and let $u\in C^{2m}(B_2 \backslash \Lambda)$ be a positive solution of 
\begin{equation}\label{HOE-409} 
(-\Delta)^m u = K(x) u^{\frac{n+2m}{n-2m}} ~~~~~~  \textmd{in} ~B_2 \backslash \Lambda. 
\end{equation} 
Suppose
\begin{equation}\label{HOE02-409}   
(-\Delta)^s u \geq 0 ~~~~~~  \textmd{in} ~ B_2 \backslash \Lambda,~~ s=1, \dots, m-1.  
\end{equation} 
Then there exists $\tau>0$ (independent of $x\in \Lambda$)  such  that for any $x_0 \in \Lambda$  we have
\begin{equation}\label{T_Lo672}
u(x) = c_{n,m} \int_{B_\tau(x_0)} \frac{K(y) u(y)^{\frac{n+2m}{n-2m}}}{|x - y|^{n-2m}} dy + h_1(x) ~~~~~\textmd{for} ~ x\in B_\tau(x_0) \backslash \Lambda, 
\end{equation}
where $h_1(x)$ is a positive smooth function in $B_\tau(x_0)$.  
\end{theorem}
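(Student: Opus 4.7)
The plan is to begin with the global integral identity of Proposition \ref{P202} and localize the singular part of the Green's function. By the decomposition \eqref{GrFu}, $G_m(x,y) = c_{n,m}|x-y|^{2m-n} + A_m(x,y)$ with $A_m$ smooth on $\overline{B}_2\times\overline{B}_2$. Splitting the volume integral in \eqref{InRLo} between $B_\tau(x_0)$ and its complement and regrouping the singular parts inside $B_\tau(x_0)$, the natural candidate for the remainder is
\[
h_1(x) := \int_{B_2\setminus B_\tau(x_0)} G_m(x,y) K(y) u(y)^{\frac{n+2m}{n-2m}}\, dy + \int_{B_\tau(x_0)} A_m(x,y) K(y) u(y)^{\frac{n+2m}{n-2m}}\, dy + \sum_{i=1}^m \int_{\partial B_2} H_i(x,y) (-\Delta)^{i-1} u(y)\, dS_y.
\]
With this choice, the identity \eqref{T_Lo672} holds on $B_\tau(x_0)\setminus\Lambda$ by construction, so the theorem reduces to verifying that $h_1$ is $C^\infty$ and strictly positive on $B_\tau(x_0)$, with $\tau$ independent of $x_0\in\Lambda$.

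For the smoothness of $h_1$ on $B_\tau(x_0)$, I would differentiate under the integral in each of the three terms. Given $x\in B_\tau(x_0)$, set $r=\tfrac12(\tau-|x-x_0|)>0$; for $x'\in B_r(x)$ and $y\in B_2\setminus B_\tau(x_0)$ one has $|x'-y|\geq r$, so every derivative of $G_m(x',\cdot)$ is uniformly bounded there, and $u^{(n+2m)/(n-2m)}\in L^1(B_2)$ from Proposition \ref{P201} provides the dominating integrability. The $A_m$-integral is smooth in $x$ because $A_m\in C^\infty(\overline{B}_2\times\overline{B}_2)$, and each boundary integral is smooth because $H_i(\cdot,y)$ is smooth in $x$ for $y\in\partial B_2$ and $x\in B_2$.

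For positivity, the argument is a lower-bound-versus-smallness dichotomy. The Navier Green's function on a ball is strictly positive on $B_2\times B_2$ (obtainable by iterating the positive Dirichlet Green's function of $-\Delta$), so the first term in $h_1$ is non-negative. By \eqref{GrFu09} and \eqref{HOE02-409}, the boundary sum dominates $\int_{\partial B_2} H_1(x,y) u(y)\, dS_y$; since $\Lambda\subset B_{1/2}$, $u$ is continuous and bounded below by some $c_0>0$ on $\partial B_2$, and the normalization \eqref{Gn-0798} together with continuity of $H_1$ yields a uniform lower bound $\gamma_0>0$ on this integral for $x\in\overline{B}_{3/4}$. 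For the middle term, $\|A_m\|_\infty$ and $\sup_{B_2} K$ are finite, so
\[
\left|\int_{B_\tau(x_0)} A_m(x,y) K(y) u(y)^{\frac{n+2m}{n-2m}}\, dy\right| \leq C\int_{B_\tau(x_0)} u(y)^{\frac{n+2m}{n-2m}}\, dy.
\]
Since the Minkowski dimension hypothesis forces $\mathcal{L}^n(\Lambda)=0$, absolute continuity of the $L^1$-integral and compactness of $\Lambda$ yield $\sup_{x_0\in\Lambda}\int_{B_\tau(x_0)} u^{(n+2m)/(n-2m)}\, dy\to 0$ as $\tau\to 0$. Choosing $\tau$ so that this supremum is less than $\gamma_0/(2C)$ gives $h_1\geq\gamma_0/2>0$ on $B_\tau(x_0)$ for every $x_0\in\Lambda$.

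The main obstacle is selecting $\tau$ uniformly in $x_0\in\Lambda$; this rests on compactness of $\Lambda$ and the absolute continuity of $\int u^{(n+2m)/(n-2m)}$ with respect to $\mathcal{L}^n$, which is available because $\mathcal{L}^n(\Lambda)=0$. Everything else is routine splitting, differentiation under the integral, and the positivity of the Navier Green's function combined with the sign condition \eqref{HOE02-409}.
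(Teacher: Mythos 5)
Your proposal follows essentially the same route as the paper: use Proposition~\ref{P202}, split $G_m = c_{n,m}|x-y|^{2m-n} + A_m$ on $B_\tau(x_0)$, and obtain positivity of $h_1$ from the nonnegativity of the Green's function terms and sign conditions \eqref{HOE02-409}, the normalization \eqref{Gn-0798} giving $\int_{\partial B_2} H_1(x,\cdot)\,u \geq \inf_{\partial B_2} u > 0$, and choosing $\tau$ small via $u^{(n+2m)/(n-2m)}\in L^1(B_2)$ and absolute continuity of the integral (the uniformity in $x_0$ follows from absolute continuity alone, compactness of $\Lambda$ is not needed). The argument is correct and matches the paper's.
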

 
\begin{proof}
We may assume, without loss of generality, that $u\in C^{2m}(\overline{B}_2 \backslash \Lambda)$ and $u > 0$ in $\overline{B}_2 \backslash \Lambda$. Otherwise, we just consider the equation in a smaller ball. 

From the assumptions on the singular set $\Lambda$  we obtain  $\textmd{Cap}(\Lambda)=0$, where $\textmd{Cap}(\Lambda)$ is the Newtonian capacity of $\Lambda$ (see, e.g.,  \cite{EGbook}).      Since $u>0$ and $-\Delta u \geq 0$ in $B_2 \backslash \Lambda$, the maximum principle (see, e.g.,  \cite[Lemma 2.1]{Chen-Lin95}) leads to   
$$
u(x) \geq c_0:=\inf_{\partial B_2} u >0~~~~~ \textmd{for} ~ \textmd{all} ~ x \in \overline{B}_2 \backslash \Lambda. 
$$
By Proposition \ref{P201} we have $u^{\frac{n+2m}{n-2m}} \in L^1(B_2)$.  Thus, there exists $0< \tau< \frac{1}{4}$ independent of $z\in B_1$ such that  
$$
\int_{B_\tau(z)} |A_m(x, y)| K(y) u(y)^{\frac{n+2m}{n-2m}} dy  < \frac{c_0}{2} ~~~~~ \textmd{for}~  \textmd{all}  ~ x \in B_\tau(z) \subset B_{3/2},
$$
where $A_m(x, y)$ is defined in \eqref{GrFu}. For every  $x_0 \in \Lambda$, using Proposition \ref{P202} we can write 
$$
u(x)=c_{n,m} \int_{B_\tau(x_0)} \frac{K(y) u(y)^{\frac{n+2m}{n-2m}}}{|x - y|^{n-2m}} dy + h_1(x)~~~~~ \textmd{for}  ~ x\in B_\tau(x_0) \backslash \Lambda, 
$$
where
$$
\aligned
h_1(x)  &  =  \int_{B_\tau(x_0)} A_m(x, y) K(y) u(y)^{\frac{n+2m}{n-2m}}dy + c_{n,m} \int_{B_2 \backslash B_\tau(x_0)} G_m(x, y) K(y) u(y)^{\frac{n+2m}{n-2m}}dy \\
&  ~~~ + \sum_{i=1}^m  \int_{\partial B_2} H_i(x, y) (-\Delta)^{i-1}u(y) dS_y \\
&  \geq -\frac{c_0}{2} +  \int_{\partial B_2} H_1(x, y) u(y) dS_y \\
&  \geq  -\frac{c_0}{2} + \inf_{\partial B_2} u =\frac{c_0}{2} >0 ~~~~~~ \textmd{for} ~ x \in B_\tau(x_0), 
\endaligned
$$
where we have used the sign conditions \eqref{HOE02-409} in the first inequality and \eqref{Gn-0798} in the second inequality.  It is easy to check that $h_1$ is smooth in $B_\tau(x_0)$ and satisfies $(-\Delta)^m h_1=0$ in $B_\tau(x_0)$.  The proof of Theorem \ref{T-Lo094} is completed. 
\end{proof} 

\begin{proof}[Proof of Theorem \ref{Thm01}]
It follows from Theorem \ref{IEthm01} and Theorem \ref{T-Lo094} via the covering and rescaling arguments.  
\end{proof}

\section{Preliminary results}\label{Sw46} 
For $x\in \mathbb{R}^n, \lambda >0$ and a function $u$,  we denote
\begin{equation}\label{N0Ta-0021}
\xi^{x, \lambda}= x+ \frac{\lambda^2 (\xi - x)}{|\xi - x|^2} ~~~  \textmd{for} ~  x \neq \xi \in \mathbb{R}^n   ~~~~ \textmd{and} ~~~~\Omega^{x,\lambda} = \{\xi^{x, \lambda}:  \xi\in \Omega\}. 
\end{equation}  
Let $u$ be a positive function,  its Kelvin transformation is defined  as  
\begin{equation}\label{N0Ta-0022}
u^{x, \lambda}(\xi) = \left( \frac{\lambda}{|\xi - x|} \right)^{n-2\sigma} u(\xi^{x, \lambda}). 
\end{equation}
Note that $(\xi^{x,\lambda})^{x, \lambda} =\xi$ and $(u^{x,\lambda})^{x,\lambda} \equiv  u$.  If $x=0$, we use the notations $\xi^{\lambda}=\xi^{0, \lambda}$ and $u^\lambda=u^{0,\lambda}$, i.e.,  
\begin{equation}\label{N0Ta-0039}
\xi^{\lambda}= \frac{\lambda^2  \xi }{|\xi |^2}  ~~~~~ \textmd{and}  ~~~~~  u^{\lambda}(\xi) = \left( \frac{\lambda}{|\xi |} \right)^{n-2\sigma} u \left( \frac{\lambda^2  \xi }{|\xi |^2} \right).  
\end{equation}

Suppose $0 < \sigma < n/2$,  $u \in L^{\frac{n+2\sigma}{n-2\sigma}}(B_2) \cap C(B_2 \backslash \Sigma)$  is a positive solution of \eqref{Int} and  $h\in C^1(B_2)$ is a positive function.   If we extend both $u$ and $K$ to be identically $0$ outside $B_2$, then 
\begin{equation}\label{A-01}
u(x) = \int_{\mathbb{R}^n} \frac{K(y) u(y)^{\frac{n+2\sigma}{n-2\sigma}}}{|x - y|^{n-2\sigma}} dy  + h(x)~~~~~~ \textmd{for}  ~ x\in B_2 \backslash \Sigma. 
\end{equation}
Making a change of variables,  we also have the following two identities (see, e.g., \cite{Li04}), 
\begin{equation}\label{Id-01}
\left( \frac{\lambda}{|\xi - x|} \right)^{n-2\sigma} \int_{|z-x|\geq \lambda}  \frac{K(z) u(z)^{\frac{n+2\sigma}{n-2\sigma}}}{|\xi^{x,\lambda} - z|^{n-2\sigma}} dz = \int_{|z-x| \leq \lambda} \frac{K(z^{x,\lambda}) u^{x,\lambda}(z)^{\frac{n+2\sigma}{n-2\sigma}}}{|\xi - z|^{n-2\sigma}} dz
\end{equation}
and
\begin{equation}\label{Id-02}
\left( \frac{\lambda}{|\xi - x|} \right)^{n-2\sigma} \int_{|z-x|\leq \lambda}  \frac{K(z) u(z)^{\frac{n+2\sigma}{n-2\sigma}}}{|\xi^{x,\lambda} - z|^{n-2\sigma}} dz = \int_{|z-x| \geq \lambda} \frac{K(z^{x,\lambda}) u^{x,\lambda}(z)^{\frac{n+2\sigma}{n-2\sigma}}}{|\xi - z|^{n-2\sigma}} dz.
\end{equation}
Thus, we obtain 
\begin{equation}\label{ABC-01}
u^{x,\lambda}(\xi) = \int_{\mathbb{R}^n} \frac{K(z^{x,\lambda}) u^{x,\lambda}(z)^{\frac{n+2\sigma}{n-2\sigma}}}{|\xi - z|^{n-2\sigma}} dz  + h^{x,\lambda}(\xi)~~~~~~ \textmd{for}  ~  \xi \in (B_2 \backslash \Sigma)^{x, \lambda}. 
\end{equation}
Hence,  for any $x\in B_1$ and $\lambda <1$, we have  for any  $\xi \in B_2 \backslash \left( \Sigma \cup \Sigma^{x,\lambda} \cup B_\lambda(x) \right)$ that 
\begin{equation}\label{ABC-0e571}
\aligned
u(\xi) -u^{x,\lambda}(\xi)  & = \int_{|z-x|\geq \lambda} G(x, \lambda; \xi, z) \Big[ K(z) u(z)^{\frac{n+2\sigma}{n-2\sigma}} - K(z^{x,\lambda}) u^{x,\lambda}(z)^\frac{n+2\sigma}{n-2\sigma} \Big] dz \\
& ~~~~~  + h^{x,\lambda}(\xi) - h(\xi), 
\endaligned
\end{equation} 
where
\begin{equation}\label{Kel=0620-2021}
G(x, \lambda; \xi, z) := \frac{1}{|\xi -z|^{n-2\sigma}} - \left(\frac{\lambda}{|\xi - x|}  \right)^{n-2\sigma} \frac{1}{|\xi^{x,\lambda} -z|^{n-2\sigma}},  ~~ |\xi -x|, |z-x| >\lambda>0.
\end{equation}
It is elementary to check that 
\begin{equation}\label{GIS893-01}
G(x, \lambda; \xi, z) >0~~~~~ \textmd{for} ~ \textmd{all} ~ |\xi -x|, |z-x| >\lambda>0. 
\end{equation}

We also give the following estimates for the integral kernel $G(0, \lambda; \xi, z)$.  Their proofs are elementary,  and we include them  here for completeness.  
\begin{lemma}\label{InK-01} Assume $0 < \sigma < n/2$ and $\lambda>0$. Then for $\xi, z \in \mathbb{R}^n \backslash \overline{B_\lambda}$, we have 
\begin{itemize}
\item [$(1)$]  There exists a positive constant $C_1=C_1(n,\sigma)$ such that for $|\xi -z|< \frac{1}{3}(|\xi| -\lambda)$, 
\begin{equation}\label{GIK0-01}
G(0, \lambda; \xi, z) \geq C_1\frac{1}{|\xi -z|^{n-2\sigma}}. 
\end{equation} 

\item [$(2)$] There exist two positive constants  $C_2=C_2(n,\sigma)$ and  $C_3=C_3(n,\sigma)$ such that for $|\xi -z|\geq  \frac{1}{3}(|\xi| -\lambda)$ and $|\xi| \leq 10 \lambda$, 
\begin{equation}\label{GIK0-02}
C_2 \frac{(|\xi|-\lambda) (|z|^2 - \lambda^2)}{\lambda|\xi- z|^{n-2\sigma+2}} \leq G(0, \lambda; \xi, z) \leq C_3 \frac{(|\xi|-\lambda) (|z|^2 - \lambda^2)}{\lambda|\xi- z|^{n-2\sigma+2}}. 
\end{equation} 
Moreover, if we only assume $\lambda < |\xi| \leq  10\lambda$ and $|z|>\lambda$, then we have the second inequality of \eqref{GIK0-02}. 
\item [$(3)$] $G(0, \lambda; \xi, z)=G(0, \lambda; z, \xi)$. 
\end{itemize}
\end{lemma}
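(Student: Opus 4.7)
The approach starts from the key algebraic identity, verified by direct expansion of $\xi^{0,\lambda} = \lambda^{2}\xi/|\xi|^{2}$,
\[
\left(\frac{|\xi|}{\lambda}\right)^{\!2} |\xi^{0,\lambda} - z|^{2} \;=\; |\xi - z|^{2} \;+\; \frac{(|\xi|^{2} - \lambda^{2})(|z|^{2} - \lambda^{2})}{\lambda^{2}}.
\]
Writing $A := |\xi - z|$, $B := (|\xi|/\lambda)\,|\xi^{0,\lambda} - z|$, and $\beta := n - 2\sigma$, this reads $B^{2} - A^{2} = (|\xi|^{2} - \lambda^{2})(|z|^{2} - \lambda^{2})/\lambda^{2} \geq 0$, and the kernel collapses to $G(0,\lambda;\xi,z) = A^{-\beta} - B^{-\beta}$, so in particular \eqref{GIS893-01} is immediate. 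Item $(3)$ then follows at once from the twin identity $|\xi|\cdot|\xi^{0,\lambda} - z| = |z|\cdot|z^{0,\lambda} - \xi|$ (the same computation with $\xi\leftrightarrow z$), which exhibits both $A$ and $B$ as symmetric functions of $(\xi,z)$.

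For item $(1)$, I will plug the hypothesis $A < \tfrac{1}{3}(|\xi| - \lambda)$ into the identity: $|z| \geq |\xi| - A$ yields $|z| - \lambda \geq \tfrac{2}{3}(|\xi| - \lambda)$, and $|\xi| + \lambda,\,|z| + \lambda \geq 2\lambda$, so $B^{2} - A^{2} \geq \tfrac{8}{3}(|\xi| - \lambda)^{2} > 24\, A^{2}$, i.e.\ $B \geq 5A$. Consequently $G \geq (1 - 5^{-\beta})\,A^{-\beta}$, and $C_{1} := 1 - 5^{-(n-2\sigma)}$ is admissible.

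For item $(2)$ the main tool will be the calculus identity $A^{-\beta} - B^{-\beta} = \beta \int_{A}^{B} t^{-(\beta+1)}\,dt$, giving $\beta(B - A)/B^{\beta+1} \leq G \leq \beta(B - A)/A^{\beta+1}$, combined with $B - A = (B^{2} - A^{2})/(A + B)$. The upper bound is then easy: $A + B \geq A$ together with $|\xi|^{2} - \lambda^{2} \leq 11\lambda(|\xi| - \lambda)$ (valid whenever $|\xi| \leq 10\lambda$) produces
\[
G \;\leq\; \frac{\beta(B^{2} - A^{2})}{A^{\beta+2}} \;\leq\; \frac{11\beta\,(|\xi| - \lambda)(|z|^{2} - \lambda^{2})}{\lambda\, A^{\beta+2}},
\]
so $C_{3} = 11\beta$ works. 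This argument only uses $|\xi| \leq 10\lambda$ and $|z| > \lambda$, so it simultaneously covers the ``moreover'' clause.

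The main obstacle is the lower bound in $(2)$: I must control $B$ from above by a constant multiple of $A$, so that $G \geq \beta(B^{2} - A^{2})/((A + B)\,B^{\beta+1}) \geq c\,(B^{2} - A^{2})/A^{\beta+2}$. Since $|z|$ is unrestricted, the plan is a short split on $|z|$. If $|z| \geq 100\lambda$, then $A \geq |z| - |\xi| \geq 9|z|/10$ while $B \leq (|\xi|/\lambda)(\lambda + |z|) \leq 11|z|$, so $B \lesssim A$. If $|z| \leq 100\lambda$, the hypothesis $A \geq \tfrac{1}{3}(|\xi| - \lambda)$ together with $|z| - \lambda \leq |z| - |\xi| + |\xi| - \lambda \leq 4A$ gives $B^{2} - A^{2} \leq C\,A^{2} + C A^{3}/\lambda$, and separating $A \leq \lambda$ from $A \geq \lambda$ (in the latter sub-case, the crude estimate $B^{2} - A^{2} \leq 10^{6}\lambda^{2} \leq 10^{6} A^{2}$ already suffices) yields $B \leq C A$ as well. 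Once this universal bound is secured, using $|\xi|^{2} - \lambda^{2} \geq \lambda(|\xi| - \lambda)$ finishes the job and produces the desired constant $C_{2}(n,\sigma) > 0$.
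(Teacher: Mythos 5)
Your proof is correct and takes essentially the same route as the paper's: both hinge on the algebraic identity $B^2-A^2=(|\xi|^2-\lambda^2)(|z|^2-\lambda^2)/\lambda^2$ with $A=|\xi-z|$, $B=\frac{|\xi|}{\lambda}|\xi^\lambda-z|$, and a mean-value/integral representation of $A^{-(n-2\sigma)}-B^{-(n-2\sigma)}$, with item $(3)$ following from the same symmetric reformulation. The only real divergence is in the lower bound of $(2)$, where you establish $B\leq CA$ via a case split on $|z|$ (plus a sub-split on $A\lessgtr\lambda$), whereas the paper gets $B\leq 43A$ in one line from the triangle inequality $B\leq\frac{|\xi|^2-\lambda^2}{\lambda}+\frac{|\xi|}{\lambda}|\xi-z|\leq 33A+10A$ (using $|\xi|-\lambda\leq3A$ and $|\xi|\leq 10\lambda$), so your case analysis, while valid, can be dispensed with.
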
 

\begin{remark}\label{InK-02}
By the symmetry of $G(0, \lambda; \xi, z)$,  we can reverse the roles of  $\xi$  and  $z$ in $(1)$ and $(2)$ of Lemma \ref{InK-01} and the corresponding conclusions still hold.   
\end{remark} 

\begin{proof}
(1)   Note that for $|\xi - z| \leq \frac{1}{3}(|\xi| -\lambda)$ and $|\xi| > \lambda$, we have $ |\xi^\lambda -z| \geq |\xi^\lambda - \xi| - |\xi -z| \geq 2|\xi -z|$. 
Thus 
$$
G(0, \lambda; \xi, z) \geq (1- 2^{2\sigma-n})  |\xi -z|^{2\sigma -n}. 
$$

(2) The kernel $G(0, \lambda; \xi, z)$ can be rewritten as
\begin{equation}\label{YGmgt-HK-409=01}
G(0, \lambda; \xi, z) = |\xi -z|^{2\sigma-n} - \bigg| \frac{\lambda}{|\xi|} \xi - \frac{|\xi|}{\lambda} z \bigg|^{2\sigma-n}. 
\end{equation} 
By a direct calculation,  we have the following formula 
\begin{equation}\label{YGmgt-HK-409=02}
\bigg| \frac{\lambda}{|\xi|} \xi - \frac{|\xi|}{\lambda} z \bigg|^{2} - |\xi -z|^{2}  = \frac{(|\xi|^2 - \lambda^2)(|z|^2 - \lambda^2)}{\lambda^2}, ~~~ \xi \neq 0. 
\end{equation} 
This obviously implies that the kernel $G(0, \lambda; \xi, z)$ is positive.  Let $f(t)=t^{-(n-2\sigma)/2}$, then by the mean value theorem and \eqref{YGmgt-HK-409=02} we get 
$$
\aligned
G(0, \lambda; \xi, z) & = f \left( |\xi -z|^2 \right) - f\bigg( \bigg| \frac{\lambda}{|\xi|} \xi - \frac{|\xi|}{\lambda} z \bigg|^2  \bigg)\\
&= \frac{n-2\sigma}{2}  \frac{(|\xi|^2 - \lambda^2)(|z|^2 - \lambda^2)}{\lambda^2 \left(  \theta|\xi -z|^2 + (1-\theta) \left| \frac{\lambda}{|\xi|} \xi - \frac{|\xi|}{\lambda} z \right|^2 \right)^{\frac{n-2\sigma+2}{2}}} 
\endaligned
$$
for some $\theta \in (0, 1)$.  When $|\xi -z|\geq  \frac{1}{3}(|\xi| -\lambda)$ and $\lambda < |\xi| \leq 10 \lambda$ we have
$$
\left| \frac{\lambda}{|\xi|} \xi - \frac{|\xi|}{\lambda} z \right| \leq \left| \frac{\lambda}{|\xi|} \xi - \frac{|\xi|}{\lambda} \xi \right| +  \left| \frac{|\xi|}{\lambda} \xi -  \frac{|\xi|}{\lambda}z  \right| \leq 43 |\xi -z| 
$$
and thus 
$$
G(0, \lambda; \xi, z) \geq  C_2 \frac{(|\xi|-\lambda) (|z|^2 - \lambda^2)}{\lambda|\xi- z|^{n-2\sigma+2}}
$$
with some constant $C_2=C_2(n, \sigma)$. On the other hand, if we only assume $\lambda < |\xi| \leq  10\lambda$ and $|z|>\lambda$, then by  \eqref{YGmgt-HK-409=02} we obtain $|\xi -z| \leq \bigg| \frac{\lambda}{|\xi|} \xi - \frac{|\xi|}{\lambda} z \bigg|$ and so 
$$
G(0, \lambda; \xi, z) \leq  C_1 \frac{(|\xi|-\lambda) (|z|^2 - \lambda^2)}{\lambda|\xi- z|^{n-2\sigma+2}} 
$$
for some constant $C_1=C_1(n, \sigma)$.    

(3) Note that 
\begin{equation}\label{YGmgt-HK-409=03}
\frac{|z|}{\lambda} \frac{|\xi|}{\lambda}  |\xi^\lambda - z^\lambda| = \left| \frac{|z|}{|\xi|} \xi - \frac{|\xi|}{|z|}z \right| = |\xi -z| ~~~~~~ \textmd{for}  ~  \xi,  z \neq 0, 
\end{equation} 
where we have used the basic identity $\left| \frac{|z|}{|\xi|} \xi - \frac{|\xi|}{|z|}z \right|^2 =|\xi -z|^2$. This implies that  
\begin{equation}\label{YGmgt-HK-409=04}
\frac{|\xi|}{\lambda}  |\xi^\lambda - z| = \frac{|z|}{\lambda} |z^\lambda -  \xi| ~~~~~~ \textmd{for}  ~ \xi, z \neq 0, 
\end{equation} 
from which we obtain $G(0, \lambda; \xi, z)=G(0, \lambda; z, \xi)$.  Lemma \ref{InK-01} is proved.  
\end{proof}

\begin{lemma}\label{InK-03} 
Assume $0 < \sigma < n/2$.  Let 
$$
U(y) = \left( \frac{1}{1 + |y - Re|^2}\right)^{\frac{n-2\sigma}{2}}, 
$$
where $e \neq 0$ and $R|e| >10$. Then there exists a positive constant $C=C(n, \sigma, R|e|)$ such that for $\lambda_0=R|e| -2$,  we have
$$
U(y) - U^{\lambda_0} (y) \geq C (|y|^2 - \lambda_0^2) |y|^{-(n-2\sigma+2)} ~~~~~~~~  \textmd{for} ~\textmd{all} ~ |y|\geq \lambda_0
$$
and
$$
\frac{\partial(U - U^{\lambda_0})}{\partial\nu} > C >0 ~~~~~~~~  \textmd{on} ~ \partial B_{\lambda_0}, 
$$
where $\nu$ denotes the unit outer normal vector of $\partial B_{\lambda_0}$. 

For $\lambda_1 = R|e| +2$, we have
$$
U(y) - U^{\lambda_1} (y) < 0 ~~~~~~~~  \textmd{for} ~\textmd{all} ~  |y| > \lambda_1. 
$$
\end{lemma}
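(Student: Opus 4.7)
The plan is to compute $U^\lambda$ in closed form and then reduce the comparison of $U$ and $U^\lambda$ to a purely algebraic inequality. Setting $p = Re$, I would substitute $\xi^\lambda = \lambda^2 \xi/|\xi|^2$ into $|\xi^\lambda - p|^2$ and multiply $1+|\xi^\lambda-p|^2$ through by $|\xi|^2$. Completing the square in $\xi$ should yield the bubble-type formula
$$U^{\lambda}(\xi) = \left(\frac{r_\lambda}{|\xi - q_\lambda|^2 + r_\lambda^2}\right)^{(n-2\sigma)/2},\qquad q_\lambda := \frac{\lambda^2 p}{1+|p|^2},\quad r_\lambda := \frac{\lambda^2}{1+|p|^2}.$$
Hence the sign of $U(\xi)-U^{\lambda}(\xi)$ is the sign of the auxiliary function
$$\phi_\lambda(\xi) := |\xi - q_\lambda|^2 + r_\lambda^2 - r_\lambda(1 + |\xi-p|^2),$$
and expanding $\phi_\lambda$ using the key cancellation $q_\lambda - r_\lambda p = 0$, together with $|q_\lambda|^2 + r_\lambda^2 = \lambda^4/(1+|p|^2)$ and $r_\lambda(1+|p|^2) = \lambda^2$, should collapse it to the clean identity
$$\phi_\lambda(\xi) = (1 - r_\lambda)(|\xi|^2 - \lambda^2).$$

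With this identity all three statements reduce to reading off the sign of $1-r_\lambda$. For $\lambda_0 = R|e|-2$ one computes $1 - r_{\lambda_0} = (4R|e|-3)/(1+R^2|e|^2) > 0$ since $R|e| > 10$, so $\phi_{\lambda_0}(\xi) \geq 0$ on $|\xi| \geq \lambda_0$ and hence $U \geq U^{\lambda_0}$ there. To upgrade this to the quantitative estimate, I would apply the mean value theorem to $f(t) = t^{-(n-2\sigma)/2}$ between $t_1 = 1 + |y-p|^2$ and $t_2 = (|y-q_{\lambda_0}|^2 + r_{\lambda_0}^2)/r_{\lambda_0}$, obtaining
$$U(y) - U^{\lambda_0}(y) = \tfrac{n-2\sigma}{2}\,\theta^{-(n-2\sigma+2)/2}(t_2 - t_1),\qquad t_2 - t_1 = \frac{(1-r_{\lambda_0})(|y|^2 - \lambda_0^2)}{r_{\lambda_0}},$$
for some $\theta$ between $t_1$ and $t_2$. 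Since $|q_{\lambda_0}| < \lambda_0 \leq |y|$ gives $|y - q_{\lambda_0}| \leq 2|y|$, we get $\theta \leq t_2 \leq C|y|^2$, and the first inequality of the lemma drops out.

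For the normal-derivative bound on $\partial B_{\lambda_0}$, I would note that $U \equiv U^{\lambda_0}$ on $\{|\xi|=\lambda_0\}$ (as $\xi^{\lambda_0}=\xi$ there), and that the same representation writes $U(y) - U^{\lambda_0}(y) = g(y)(|y|^2 - \lambda_0^2)$ on a neighborhood of $\partial B_{\lambda_0}$ with $g$ continuous and bounded below by a positive constant. Differentiating in the outward normal direction, with $\partial_\nu(|y|^2)|_{\partial B_{\lambda_0}} = 2\lambda_0$, then yields the strict positivity. Finally, for $\lambda_1 = R|e|+2$ the same identity gives $1 - r_{\lambda_1} = -(4R|e|+3)/(1 + R^2|e|^2) < 0$, so $\phi_{\lambda_1}(\xi) < 0$ whenever $|\xi| > \lambda_1$ and hence $U < U^{\lambda_1}$ in that region. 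The only step demanding any care is the algebraic expansion that produces $\phi_\lambda = (1-r_\lambda)(|\xi|^2-\lambda^2)$; once this identity is established everything else is bookkeeping.
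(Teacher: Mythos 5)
Your proof is correct and is essentially the same argument as the paper's: an algebraic identity for the difference of the reciprocals of the two bases, followed by the mean value theorem for $f(t)=t^{-(n-2\sigma)/2}$, and then reading off the sign of the bracketed factor for $\lambda_0$ versus $\lambda_1$. The only difference is how the identity is packaged. You recast $U^{\lambda}$ explicitly as a bubble centered at $q_\lambda=r_\lambda p$ with scale $r_\lambda=\lambda^2/(1+|p|^2)$ and then obtain $\phi_\lambda(\xi)=(1-r_\lambda)(|\xi|^2-\lambda^2)$, whereas the paper invokes the previously established inversion identity
\begin{equation*}
\Bigl|\tfrac{\lambda}{|y|}y-\tfrac{|y|}{\lambda}Re\Bigr|^{2}-|y-Re|^{2}=\frac{(|y|^{2}-\lambda^{2})(R^{2}|e|^{2}-\lambda^{2})}{\lambda^{2}}
\end{equation*}
(their (3.8)) and writes the difference directly as $\frac{n-2\sigma}{2}\,\frac{(|y|^2-\lambda^2)(R^2|e|^2-\lambda^2+1)}{\lambda^2\eta^{(n-2\sigma+2)/2}}$; the two expressions are algebraically identical since $(1-r_\lambda)/r_\lambda=(1+|p|^2-\lambda^2)/\lambda^2$. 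Your ``bubble maps to bubble'' derivation is arguably the more transparent way to see the cancellation. One small gloss: for the normal-derivative claim it is cleanest to avoid asserting continuity of $g$ and simply note that $U-U^{\lambda_0}$ is smooth near $\partial B_{\lambda_0}$, vanishes there, and is bounded below by $C(|y|-\lambda_0)$ on a shell $\lambda_0\le|y|\le\lambda_0+1$ by the first estimate; the one-sided difference quotient then gives $\partial_\nu(U-U^{\lambda_0})\ge C>0$ directly, which is what the paper does. Also worth noting explicitly is that the bound $\theta\le t_2$ uses $t_1\le t_2$ for $|y|>\lambda_0$, which is exactly $\phi_{\lambda_0}(y)\ge 0$.
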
 

\begin{proof}
Using the mean value theorem and the formula  \eqref{YGmgt-HK-409=02},  we get that for any $\lambda>0$, 
\begin{equation}\label{YGmgt-HK-409=05}
\aligned
U(y) - U^{\lambda} (y) & = \left( \frac{1}{1 + |y - Re|^2}\right)^{\frac{n-2\sigma}{2}} - \left( \frac{1}{\frac{|y|^2}{\lambda^2} + \frac{|y|^2}{\lambda^2}|y^\lambda - Re|^2}\right)^{\frac{n-2\sigma}{2}} \\
& = \frac{n-2\sigma}{2} \cdot \frac{(|y|^2 -\lambda^2)(R^2|e|^2 - \lambda^2 +1)}{\lambda^2 \eta^{(n-2\sigma+2)/2}}, 
\endaligned
\end{equation} 
where $\eta$ is some number between $1 + |y - Re|^2$ and $\frac{|y|^2}{\lambda^2} + \frac{|y|^2}{\lambda^2}|y^\lambda - Re|^2$. When $\lambda=\lambda_0$, it is easy to see that $R^2|e|^2 - \lambda^2 +1 >0$ and $\eta \leq c|y|^2$  for some constant $c=c(R)>0$. Hence there exists $C=C(n, \sigma, R)>0$ such that 
$$
U(y) - U^{\lambda_0} (y) \geq C  (|y|^2 - \lambda_0^2) |y|^{-(n-2\sigma+2)} ~~~~~~~~  \textmd{for} ~\textmd{any} ~ |y|\geq \lambda_0. 
$$
This also implies that 
$$
\frac{\partial(U - U^{\lambda_0})}{\partial\nu} > C >0 ~~~~~~~~  \textmd{on} ~ \partial B_{\lambda_0}
$$
for another constant $C=C(n, \sigma, R)>0$, where $\nu$ means the unit outer normal vector of $\partial B_{\lambda_0}$.   \

When $\lambda=\lambda_1$, we have  $R^2|e|^2 - \lambda^2 +1 <0$ and by \eqref{YGmgt-HK-409=05}, 
$$
U(y) - U^{\lambda_1} (y) < 0 ~~~~~~~~  \textmd{for} ~\textmd{any} ~  |y| > \lambda_1. 
$$
Lemma \ref{InK-03} is proved.  
\end{proof}

\section{Local estimates under the assumption (K1)}\label{S3000} 
In this section,  by using the method of moving spheres introduced by Li-Zhu \cite{Li-Zhu,Li04},  we shall prove Theorem \ref{IEthm01} under the assumption (K1) in the spirit of the works of Chen-Lin \cite{Chen-Lin97,Lin00} and Zhang \cite{Zhang02}.  Unlike \cite{Chen-Lin97,Lin00,Zhang02} dealing directly with second-order differential equations, we study the problem in a framework of integral equations. In particular, analysis techniques for integral equations are needed to overcome the lack of maximum principle.    

\vskip0.10in 

\noindent{\it Proof of Theorem \ref{IEthm01} under the assumption (K1). }  Suppose by contradiction that there exists a sequence $\{x_j\}_{j=_1}^\infty \subset B_1 \backslash \Sigma$ such that 
$$
d_j:=\textmd{dist} (x_j, \Sigma) \to 0~~~~~ \textmd{as}  ~ j\to \infty, 
$$
but
$$
d_j^{\frac{n-2\sigma}{2}} u(x_j) \to \infty~~~~~ \textmd{as}  ~ j\to \infty. 
$$
Without  loss of generality we may assume that $0\in \Sigma$ and $x_j\to 0$ as $j\to \infty$.  Since the following proof is very long, we first explain the idea and the sketch of the proof. 

In {\it Step 1}, by using the blow up analysis we show that $x_j$ can be chosen as the local maximum points of $u$.  

In {\it Step 2},  we show that $\nabla K(0)=0$ under the assumption of $K\in C^1(B_2)$.   If not,  then we assume without loss of generality that $\nabla K(0) =e =(1, 0, \dots, 0)$.  Let $w_j$ be the scaled and shifted function in \eqref{21SS3-01} for some sufficiently large $R>0$ and let $w_j^\lambda$ be the Kelvin transformation of $w_j$  in \eqref{21SS3-022}. Define $\varphi_\lambda(y) = w_j(y) - w_j^\lambda(y)$ and $\Phi_\lambda(y)$ as in \eqref{SL4-312=02} with $\lambda \in [R-2, R+2]$,  then $\varphi_\lambda + \Phi_\lambda$ satisfies the integral inequality \eqref{SL4-312=04}.  More importantly,  $\Phi_\lambda$ is negative and satisfies the estimates in Lemma \ref{LS4-312=01} under the current assumption that $\nabla K(0) =(1, 0, \dots, 0)$.    By Lemmas \ref{LAn-01} and \ref{LS4-312=01},  for $\lambda=\lambda_0=R-2$ we have 
\begin{equation}\label{2021==0623_01}
\varphi_{\lambda} + \Phi_{\lambda} \geq 0~~~~~~ \textmd{in} ~\Pi_j \backslash \overline{B}_{\lambda}, 
\end{equation} 
and for $\lambda_1=R+2$  we have 
\begin{equation}\label{2021==0623_02}
\varphi_{\lambda_1} (y^*)+ \Phi_{\lambda_1} (y^*) < 0  ~~~~~~ \textmd{for} ~ \textmd{some} ~ y^* \in B_{2\lambda_1} \backslash  \overline{B}_{\lambda_1}.    
\end{equation}  
Thus we can start moving the sphere continuously for  $\varphi_{\lambda} + \Phi_{\lambda}$ from $\lambda=\lambda_0$ as long as \eqref{2021==0623_01} holds,   and the sphere must stop at some $\bar{\lambda} \in [\lambda_0, \lambda_1)$.  Furthermore, with the help of the negativity of $\Phi_{\lambda}$ in Lemma \ref{LS4-312=01} and of the lower bounds of the remainder term $J_\lambda$ in Lemma \ref{hhff=01},  the moving sphere procedure may continue beyond $\bar{\lambda}$ where we reach a contradiction. 

In {\it Step 3}, we complete the proof of Theorem \ref{IEthm01} under the assumption of (K1). By {\it Step 2} we know that the origin $0$ is a critical point of $K$.  Without loss of generality we may assume that  $\lim_{j \to \infty} |\nabla K(x_j)|^{-1} \nabla K(x_j) =(1, 0, \dots, 0)$.  We will follow the notation in {\it Step 2}. As in {\it Step 2}, the main idea is still to use the moving sphere method for $\varphi_{\lambda} + \Phi_{\lambda}$, but under the current assumption of $K$, the function $\Phi_{\lambda}$ may not be negative.  However,  we can rewrite $\Phi_{\lambda}$ as $\Phi_{1,\lambda} + \Phi_{2,\lambda}$ where $\Phi_{1,\lambda}$ is negative and $\Phi_{2,\lambda}$ has a good estimate (see Lemmas \ref{L-swjhdg-010} and \ref{Lg_lammfb=01}).  Based on the estimates of $\Phi_{2,\lambda}$, we consider two cases for $\alpha>1$ separately. 

\begin{itemize}
\item [$(1)$]  {\it $\alpha < 2\sigma$ or $\alpha \geq  (n-2\sigma)/2$}. In this case, by choosing a sufficiently small $\varepsilon>0$ we construct the following function
$$
H_\lambda(y)  = - \varepsilon M_j^{-1} (\lambda^{2\sigma-n} - |y|^{2\sigma -n} ) + \Phi_\lambda(y), ~~~~ y\in \Pi_j \backslash \overline{B}_{\lambda} 
$$
which can be negative.  Using the lower estimates of the remainder term $J_\lambda$ in Lemma \ref{hhff=01}, the method of moving spheres can be applied to $\varphi_{\lambda} + H_{\lambda}$ to reach a contradiction.   

\item [$(2)$] {\it $2\sigma \leq \alpha < (n-2\sigma)/2$}.  In this case, $\Phi_{2,\lambda}$ on $\Pi_j \backslash B_{2l_j}$ is too large to construct $H_\lambda$ as in the first case, where $l_j:=u(x_j)^{\frac{2}{n-2\sigma}}|x_j| \to \infty$.  To deal with this difficulty, a different idea is needed. Note that $\Phi_\lambda$ is negative on $B_{2l_j} \backslash \overline{B}_{\lambda}$ and could be non-negative on $\Pi_j \backslash B_{2l_j}$,  thus  when moving the sphere, the trouble is 
\begin{equation}\label{Touble=0012}
\int_{\mathcal{O}_\lambda} G(0, \lambda; y, z) b_\lambda(z) \varphi_\lambda(z) dz, 
\end{equation} 
where $\mathcal{O}_\lambda= \{y \in \Pi_j \backslash B_{2l_j}:  w_j(y) <   w_j^\lambda(y) \}$. To control this integral, we add a function $T_\lambda$ defined in \eqref{Tiyyt=20210624} to both sides of the integral inequality \eqref{SL4-312=04}.  Let $H_\lambda :=\Phi_{1,\lambda} + T_\lambda$, then $\varphi_\lambda + H_\lambda$ satisfies the integral inequality \eqref{318=09sdbfjke}.  Now when the method of moving spheres is applied to $\varphi_\lambda + H_\lambda$ (starting from $\lambda_0=R-2$),  the troublesome integral \eqref{Touble=0012} can be controlled by
$$
M_j^{-\frac{2\alpha}{n-2\sigma}}\int_{\Omega_j \backslash B_{2l_j}} G(0, \lambda; y, z)  |z|^{\alpha-2\sigma-n} dz 
$$
with the help of Lemmas \ref{L-swjhdg-010} and \ref{Lg_lammfb=01}. This will lead to a contradiction.   
\end{itemize}

Now we return to the detailed proof of Theorem \ref{IEthm01} under the assumption (K1).   

\vskip0.10in 

{\it Step 1.  We show that $x_j$ can be chosen as the local maximum points of $u$. Moreover, the functions $u(x_j)^{-1} u( x_j +  u(x_j)^{-\frac{2}{n-2\sigma}} y)$ converge in $C_{loc}^2(\mathbb{R}^n)$, after passing a subsequence,  to a positive function $U_0 \in C^2(\mathbb{R}^n)$ where $U_0$ satisfies   
\begin{equation}\label{S3-W1-0376}
\begin{cases}
U_0(y) = \int_{\mathbb{R}^n} \frac{K(0) U_0(z)^{\frac{n+2\sigma}{n-2\sigma}}}{|y -z |^{n-2\sigma}} dz~~~~~ \textmd{for} ~ y \in \mathbb{R}^n, \\
\max_{\mathbb{R}^n} U_0=U_0(0)=1.
\end{cases}
\end{equation}
}
\vskip0.10in 
Define 
$$
s_j(x) := \left( \frac{d_j}{2} - |x-x_j| \right)^{\frac{n-2\sigma}{2}} u(x), ~~~ |x - x_j| \leq \frac{d_j}{2}. 
$$
Since $u$ is positive and continuous in $\overline{B}_{d_j /2}(x_j)$, we can find a point $\bar{x}_j \in \overline{B}_{d_j/2 }(x_j)$ such that  
$$
s_j(\bar{x}_j) = \max_{|x -x_j| \leq \frac{d_j}{2}} s_j(x) >0. 
$$
Let $2\mu_j:= \frac{d_j}{2} - |\bar{x}_j - x_j|$.   Then
$$
0 < 2\mu_j \leq \frac{d_j}{2}~~~~~~  \textmd{and} ~~~~~~ \frac{d_j}{2} - |x-x_j| \geq \mu_j~~~~\forall ~  |x-\bar{x}_j| \leq \mu_j. 
$$
By the definition of $s_j$, we have
\begin{equation}\label{SLo3-000} 
(2 \mu_j)^{\frac{n-2\sigma}{2}} u( \bar{x}_j ) = s_j(\bar{x}_j) \geq s_j(x) \geq \mu_j^{\frac{n-2\sigma}{2}}u(x)~~~~\forall ~ |x-\bar{x}_j| \leq \mu_j. 
\end{equation}
Hence 
\begin{equation}\label{SLo3-01} 
2^{\frac{n-2\sigma}{2}} u( \bar{x}_j ) \geq u(x)~~~~~~ \forall ~ |x-\bar{x}_j| \leq \mu_j. 
\end{equation}
We also have
\begin{equation}\label{SLo3-02}
(2 \mu_j)^{\frac{n-2\sigma}{2}} u( \bar{x}_j ) = s_j(\bar{x}_j) \geq s_j(x_j) = \left( \frac{d_j}{2} \right)^{\frac{n-2\sigma}{2}} u(x_j) \to \infty ~~~~\textmd{as}  ~ j\to \infty. 
\end{equation} 
Now, we consider 
$$
v_j(y)=\frac{1}{u(\bar{x}_j)} u \left( \bar{x}_j + \frac{y}{u(\bar{x}_j)^{\frac{2}{n-2\sigma}}} \right), ~f_j(y)=\frac{1}{u(\bar{x}_j)} h \left( \bar{x}_j + \frac{y}{u(\bar{x}_j)^{\frac{2}{n-2\sigma}}} \right)~  \textmd{in}~ \Theta_j, 
$$
where 
$$
\Theta_j =\left\{y\in \mathbb{R}^n :   \bar{x}_j + \frac{y}{u(\bar{x}_j)^{\frac{2}{n-2\sigma}}} \in  B_2 \backslash \Sigma \right\}. 
$$
We extend $v_j$ to be identically $0$ outside $\Theta_j$ and $K$ to be identically $0$ outside $B_2$.   Then $v_j$ satisfies $v_j(0)=1$ and 
\begin{equation}\label{Swj01}
v_j(y) = \int_{\mathbb{R}^n} \frac{K(\bar{x}_j + u(\bar{x}_j)^{-\frac{2}{n-2\sigma}} z ) v_j(z)^{\frac{n+2\sigma}{n-2\sigma}}}{|y -z |^{n-2\sigma}} dz + f_j(y)~~~~~
\textmd{for} ~ y\in \Theta_j. 
\end{equation}
Moreover, it follows from \eqref{SLo3-01} and  \eqref{SLo3-02} that  
\begin{equation}\label{38-01234}
v_j(y) \leq 2^{\frac{n-2\sigma}{2}}~~~ \textmd{in} ~ B_{\bar{R}_j}, 
\end{equation}
where 
$$
\bar{R}_j := \mu_j u(\bar{x}_j)^{\frac{2}{n-2\sigma}} \to \infty ~~~ \textmd{as} ~ j \to \infty. 
$$
Clearly $u(\bar{x}_j) \to \infty$ as $j \to \infty$,  and $\|f_j\|_{C^1(B_{\bar{R}_j})} \to 0$ as $ j \to \infty$ since 
\begin{equation}\label{38-012}
\aligned
\|f_j\|_{C^1(B_{\bar{R}_j})} & \leq \frac{1}{u(\bar{x}_j)} \|h\|_{L^\infty(B_{\mu_j}(\bar{x}_j))} + \frac{1}{u(\bar{x}_j)^\frac{n - 2 \sigma + 2}{n - 2 \sigma}} \|\nabla h\|_{L^\infty(B_{\mu_j}(\bar{x}_j))} \\
& \leq \frac{1}{u(\bar{x}_j)} \|h\|_{L^\infty(B_{3/2})} + \frac{1}{u(\bar{x}_j)^\frac{n - 2 \sigma + 2}{n - 2 \sigma}} \|\nabla h\|_{L^\infty(B_{3/2})} \to 0 ~~~ \textmd{as} ~ j \to \infty. 
\endaligned
\end{equation}

{\it Claim 1: There exists a subsequence of $ v_j$,  still denoted by $ v_j$,  such that $ v_j$ in $C_{loc}^2(\mathbb{R}^n)$ converges to a positive function $v \in C^2(\mathbb{R}^n)$ where $v$ satisfies    
\begin{equation}\label{S3-W1}
v(y) = \int_{\mathbb{R}^n} \frac{K(0) v(z)^{\frac{n+2\sigma}{n-2\sigma}}}{|y -z |^{n-2\sigma}} dz~~~~~ \textmd{for} ~ y \in \mathbb{R}^n. 
\end{equation}
}

Since for any $R>0$ we have  $v_j(y)\leq 2^\frac{n-2\sigma}{2}$ in $B_R$ for all large $j$, by regularity results in \cite[Section 2.1]{JLX17}  there exists $v \geq 0$ such that,  up to a subsequence,    
$$
v_j \to v~~~~~ \textmd{in}  ~C_{loc}^{2}(\mathbb{R}^n). 
$$
 Clearly   $v(0)=1$.    
To show  that $v$ satisfies the integral equation \eqref{S3-W1},  we will use some arguments of \cite[Proposition 2.9]{JLX17}.    Write \eqref{Swj01} as  
\begin{equation}\label{S33-00133vv}
v_j(y)=\int_{B_r} \frac{K(\bar{x}_j + u(\bar{x}_j)^{-\frac{2}{n-2\sigma}} z ) v_j(z)^{\frac{n+2\sigma}{n-2\sigma}}}{|y -z |^{n-2\sigma}} dz + F_j(r,y)~~~~~
\textmd{for} ~ y\in \Theta_j, 
\end{equation} 
where
$$
F_j(r,y)=\int_{B_r^c} \frac{K(\bar{x}_j + u(\bar{x}_j)^{-\frac{2}{n-2\sigma}} z ) v_j(z)^{\frac{n+2\sigma}{n-2\sigma}}}{|y -z |^{n-2\sigma}} dz + f_j(y).
$$
Then,  for $y\in B_{r/2}$  we have 
$$
\aligned
F_j(r,y)&=\int_{B_r^c} \frac{K(\bar{x}_j + u(\bar{x}_j)^{-\frac{2}{n-2\sigma}} z ) v_j(z)^{\frac{n+2\sigma}{n-2\sigma}}}{|z |^{n-2\sigma}} \frac{|z|^{n-2\sigma}}{|y-z|^{n-2\sigma}} dz + f_j(y)\\
&\leq C \int_{B_r^c} \frac{K(\bar{x}_j + u(\bar{x}_j)^{-\frac{2}{n-2\sigma}} z ) v_j(z)^{\frac{n+2\sigma}{n-2\sigma}}}{|z |^{n-2\sigma}} dz + \|f_j\|_{L^\infty(B_{r/2})}\\
&\leq  C v_j(0) + \|f_j\|_{L^\infty(B_{r/2})}  
\endaligned
$$
for all large $j$.  Similarly,  for $y\in B_{r/2}$,
$$
|\nabla_y F_j(r,y)|\leq  C(r)  v_j(0) + \|\nabla f_j\|_{L^\infty(B_{r/2})}. 
$$
These together with \eqref{38-01234} and \eqref{38-012} imply that  $\|F_j(r,\cdot)\|_{C^1(B_{r/2})}\leq C(r)$ for all $j$ large.  Thus,  after passing to a subsequence, $F_j(r,\cdot)\to F(r,\cdot)$ in $C^{1/2}(B_{r/2})$. Hence, letting $j \to \infty$ in \eqref{S33-00133vv} we obtain 
\begin{equation}\label{hRywyint}
F(r,y)=v(y)-\int_{B_r} \frac{K(0) v(z)^{\frac{n+2\sigma}{n-2\sigma}}}{|y -z |^{n-2\sigma}} dz ~~~~~ \textmd{for} ~  y\in B_{r/2}. 
\end{equation}
Furthermore,  $F(r,y) \geq 0$ and it is non-increasing in $r$. For  $r >>|y|$,
$$
\aligned
\frac{r^{n-2\sigma}}{(r+|y|)^{n-2\sigma}}(F_j(r,0)-f_j(0)) & \leq F_j(r,y)-f_j(y)\\
& = \int_{B_r^c} \frac{K(\bar{x}_j + u(\bar{x}_j)^{-\frac{2}{n-2\sigma}} z ) v_j(z)^{\frac{n+2\sigma}{n-2\sigma}} }{|y|^{n-2\sigma}} \frac{|y|^{n-2\sigma}}{|y -z |^{n-2\sigma}} dz \\
&\leq \frac{r^{n-2\sigma}}{(r -|y|)^{n-2\sigma}}(F_j(r,0)-f_j(0)).
\endaligned
$$
Let $j$ tend to $\infty$, we get
$$
\frac{r^{n-2\sigma}}{(r+|y|)^{n-2\sigma}}F(r,0) \leq F(r ,y)\\
\leq \frac{r^{n-2\sigma}}{(r -|y|)^{n-2\sigma}}F(r ,0), 
$$
which leads to $\lim_{r\to\infty} F(r,y)=\lim_{r\to\infty} F(r,0)=:C_0\geq  0$.  Sending $r \to +\infty$ in \eqref{hRywyint}  and using Lebesgue's  monotone convergence theorem,  we obtain 
$$
v(y)=\int_{\R^n} \frac{K(0) v(z)^{\frac{n+2\sigma}{n-2\sigma}}}{|y -z |^{n-2\sigma}} dz+C_0~~~~~ \textmd{for} ~ y \in \mathbb{R}^n. 
$$
If $C_0 > 0$, then $v(y) \geq C_0 >0$ for any  $y\in\R^n$ and hence 
$$
1= v(0)\geq \int_{\R^n} \frac{K(0) C_0^{\frac{n+2\sigma}{n-2\sigma}}}{|z |^{n-2\sigma}} dz = +\infty.
$$
This is impossible.  Therefore,  $C_0=0$ and Claim 1 is established.

Since $v(0)=1$,  by the classification results in \cite{CLO06} or \cite{Li04}, $v$ must be of the form 
\begin{equation}\label{Class301}
v(y) =  \left( \frac{a_0}{1+ a_0^2|y - y_0|^2 } \right)^{\frac{n-2\sigma}{2}}
\end{equation}
for  some $y_0 \in \mathbb{R}^n$ and some $a_0 \geq 1$. Obviously, $v$ has an absolute maximum at $y_0$.  It  implies that $v_j(y)$ must have a local maximum at a point $y_j$ near $y_0$ when $j$ is large. Replacing  $\bar{x}_j$ by 
$$
\tilde{x}_j := \bar{x}_j + u(\bar{x}_j)^{-\frac{2}{n-2\sigma}} y_j, 
$$
then $\{\tilde{x}_j\}$ are local maximum points of $u$ for large $j$. Moreover, by \eqref{SLo3-02} we have for large $j$ that 
\begin{equation}\label{Ma-3-8787}
u(\tilde{x}_j) = v_j(y_j) u(\bar{x}_j) \geq \frac{a_0^{(n-2\sigma)/2}}{2} u(\bar{x}_j)  \geq \frac{a_0^{(n-2\sigma)/2}}{2} u(x_j)
\end{equation} 
and $\tilde{x}_j \in B_{\mu_j /2 }(\bar{x}_j) \subset B_{d_j /2}(x_j)$ which implies $\frac{1}{2}d_j \leq \textmd{dist}(\tilde{x}_j, \Sigma) \leq \frac{3}{2}d_j$ and $B_{\mu_j /2 }(\tilde{x}_j) \subset B_{\mu_j} (\bar{x}_j)$. Consequently, 
$$
\textmd{dist}(\tilde{x}_j, \Sigma)^{\frac{n-2\sigma}{2}} u(\tilde{x}_j) \to \infty ~~ \textmd{and} ~~ \tilde{x}_j \to 0 ~~~~~~~ \textmd{as} ~  j\to \infty. 
$$ 
Furthermore, by  \eqref{SLo3-01},  \eqref{Ma-3-8787} and \eqref{SLo3-02} we have 
$$
u(\tilde{x}_j) \geq \frac{1}{2} \left( \frac{a_0}{2} \right)^{\frac{n-2\sigma}{2}} u(x) ~~~~~~ \forall ~ |x-\tilde{x}_j| \leq \frac{\mu_j}{2} 
$$
and 
$$
R_j:= \frac{1}{2} \mu_j u(\tilde{x}_j)^{\frac{2}{n-2\sigma}} \to \infty ~~~ \textmd{as} ~ j \to \infty. 
$$
Using the proof of Claim 1 we know that the functions $u(\tilde{x}_j)^{-1} u( \tilde{x}_j +  u(\tilde{x}_j)^{-\frac{2}{n-2\sigma}} y)$ converge in $C_{loc}^2(\mathbb{R}^n)$, after passing a subsequence,  to a positive function $U_0 \in C^2(\mathbb{R}^n)$ which satisfies \eqref{S3-W1-0376}.  It follows from the classification results in \cite{CLO06} or  \cite{Li04} that, modulo a positive constant,  
\begin{equation}\label{Class37-90401}
U_0(y) = \left( \frac{1}{1+ |y|^2 } \right)^{\frac{n-2\sigma}{2}}. 
\end{equation}
Thus the conclusion of Step 1 is proved.  From now on, we consider $x_j$ as $\tilde{x}_j$.

\vskip0.10in 

{\it Step 2. We show  $\nabla K(0)=0$ assuming $K \in C^1(B_2)$ and $n > 2\sigma$. }

\vskip0.10in 
 
 Suppose $\nabla K(0) \neq 0 $. We may assume without  loss of generality that 
$$
 \nabla K(0) =e =(1, 0, \dots, 0).
$$
Let 
\begin{equation}\label{21SS3-01}
w_j(y)=M_j^{-1} u \left( x_j + M_j^{-\frac{2}{n-2\sigma}} (y-Re) \right), ~h_j(y)=M_j^{-1}  h \left( x_j + M_j^{-\frac{2}{n-2\sigma}} (y-Re) \right) ~  \textmd{in}~ \Omega_j, 
\end{equation} 
where $M_j=u(x_j)$,  $R>10$ is a large positive constant to be determined later,  and 
\begin{equation}\label{hhmm-9e7=5}
\Omega_j =\left\{y\in \mathbb{R}^n :   x_j + M_j^{-\frac{2}{n-2\sigma}} (y-Re)  \in  B_2 \backslash \Sigma \right\}. 
\end{equation} 
By Step 1, $w_j$ converges in $C^2$ norm to the bubble $U_1=U_0(\cdot - Re)$ on every compact subset of $\mathbb{R}^n$. We also extend $w_j$ to be identically $0$ outside $\Omega_j$ and $K$ to be identically $0$ outside $B_2$. For $\lambda >0$, let  
\begin{equation}\label{21SS3-022}
w_j^\lambda(y) =\left( \frac{\lambda}{|y|} \right)^{n-2\sigma} w_j \left( \frac{\lambda^2 y}{|y|^2} \right),~~h_j^\lambda(y) =  \left( \frac{\lambda}{|y|} \right)^{n-2\sigma} h_j \left( \frac{\lambda^2 y}{|y|^2} \right)
\end{equation}
and let 
\begin{equation}\label{21SS3-033}
K_j(y) =K \left( x_j + M_j^{-\frac{2}{n-2\sigma}} (y-Re) \right). 
\end{equation}
Then 
\begin{equation}\label{21SS3-044}
w_j(y) = \int_{\mathbb{R}^n} \frac{K_j(z) w_j(z)^{\frac{n+2\sigma}{n-2\sigma}}}{|y -z |^{n-2\sigma}} dz + h_j(y)~~~~~\textmd{for} ~ y\in \Omega_j. 
\end{equation}
By \eqref{ABC-01},  $w_j^\lambda$ satisfies 
\begin{equation}\label{21SS3-055}
w_j^\lambda(y) = \int_{\mathbb{R}^n} \frac{K_j(z^\lambda) w_j^\lambda(z)^{\frac{n+2\sigma}{n-2\sigma}}}{|y -z |^{n-2\sigma}} dz + h_j^\lambda(y)~~~~~\textmd{for} ~ y \in \Omega_j \backslash   \overline{B}_{\lambda},  
\end{equation}
where $z^\lambda = \frac{\lambda^2 z}{|z|^2}$ is the inversion of $z$ with respect to $\partial B_\lambda$.

The following lemma gives the estimates on the difference between $w_j$ and $w_j^\lambda$ with $\lambda=R-2$ or $R+2$.  

\begin{lemma}\label{LAn-01}
Let $\lambda_0= R-2$ and $\lambda_1 =R +2$. Then there exist $\varepsilon_0=\varepsilon_0(n, \sigma, \min_{B_2\backslash \Sigma}{u}, K, R)>0$ and $j_0=j_0(n, \sigma, \min_{B_2\backslash \Sigma}{u},  K,  R)>1$ such that for all $j \geq j_0$, 
\begin{equation}\label{21SS3&8-001}
w_j(y) - w_j^{\lambda_0}(y) \geq \varepsilon_0 (|y| - \lambda_0) |y|^{2\sigma-1-n} + \varepsilon_0 M_j^{-1} (\lambda_0^{2\sigma-n} - |y|^{2\sigma-n}), ~~~ y \in \Omega_j \backslash  \overline{B}_{\lambda_0}.
\end{equation}
Moreover, there exists $y^* \in B_{2\lambda_1} \backslash  \overline{B}_{\lambda_1}$ such that for all $j \geq j_0$, 
\begin{equation}\label{21SS3&8-003}
w_j(y^*) -  w_j^{\lambda_1}(y^*) \leq -\varepsilon_0. 
\end{equation}
\end{lemma}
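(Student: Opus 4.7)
The plan is to transfer the model estimates of Lemma~\ref{InK-03} from $U_1(y) := (1+|y-Re|^2)^{-(n-2\sigma)/2}$ to the rescaled functions $w_j$, using the $C^2_{\mathrm{loc}}(\mathbb{R}^n)$ convergence $w_j \to U_1$ established in Step~1 as the bridge. Since $U_1$ is exactly the function in Lemma~\ref{InK-03} with $e=(1,0,\dots,0)$ and $R|e|=R>10$, that lemma applied with $\lambda_0 = R-2$ supplies the pointwise bound $U_1 - U_1^{\lambda_0} \geq C(|y|^2-\lambda_0^2)|y|^{-(n-2\sigma+2)}$ together with a strictly positive inward normal derivative on $\partial B_{\lambda_0}$, while with $\lambda_1 = R+2$ it supplies $U_1 - U_1^{\lambda_1} < 0$ on $\{|y| > \lambda_1\}$. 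Two perturbative adjustments are then needed: accounting for the contribution of $h_j$ in \eqref{21SS3-044}, and handling $|y|$ that escape every fixed compact set as $j\to\infty$.

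The bound \eqref{21SS3&8-003} is immediate from pointwise convergence. I would fix any $y^* \in B_{2\lambda_1} \backslash \overline B_{\lambda_1}$ with $U_1(y^*) - U_1^{\lambda_1}(y^*) \leq -2\varepsilon_0 < 0$; since both $y^*$ and its Kelvin image $(y^*)^{\lambda_1}$ lie in a fixed bounded set, the local uniform convergence gives $w_j(y^*) - w_j^{\lambda_1}(y^*) \to U_1(y^*) - U_1^{\lambda_1}(y^*)$, so this quantity is at most $-\varepsilon_0$ for all $j \geq j_0$.

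For \eqref{21SS3&8-001}, I would split $\Omega_j \backslash \overline B_{\lambda_0}$ at a large fixed radius $T$. On the compact annulus $\{\lambda_0 \leq |y| \leq T\}$, uniform convergence upgrades Lemma~\ref{InK-03}(1) (with $|y|^2 - \lambda_0^2 \geq (|y|-\lambda_0)|y|$) into $w_j - w_j^{\lambda_0} \geq 2\varepsilon_0(|y|-\lambda_0)|y|^{2\sigma-n-1}$ for $j$ large, with the boundary rate near $\partial B_{\lambda_0}$ protected by the strictly positive normal derivative of $U_1 - U_1^{\lambda_0}$; the second term $\varepsilon_0 M_j^{-1}(\lambda_0^{2\sigma-n} - |y|^{2\sigma-n})$ is $O(M_j^{-1})$ on this compact set and is absorbed into the slack. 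On $\{|y| > T\} \cap \Omega_j$, I would use $w_j \geq h_j$ from \eqref{21SS3-044} together with a Taylor expansion of $h(x_j + M_j^{-2/(n-2\sigma)}(\,\cdot\,- Re))$ around $x_j$, yielding to leading order $h_j(y) - h_j^{\lambda_0}(y) = M_j^{-1} h(x_j) \lambda_0^{n-2\sigma}(\lambda_0^{2\sigma-n} - |y|^{2\sigma-n}) + o(M_j^{-1})$. Since $h(x_j) \to h(0) > 0$, this produces the required $M_j^{-1}$-shape; the estimate $w_j^{\lambda_0}(y) \leq C|y|^{2\sigma-n}$, which follows from $y^{\lambda_0} \in \overline B_{\lambda_0}$ and uniform boundedness of $w_j$ there, handles the opposite-sign contribution.

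The main technical difficulty is uniformity across the expanding domain $\Omega_j$. Near $\partial B_{\lambda_0}$ both lower-bound terms vanish linearly in $|y| - \lambda_0$, so the boundary derivative from Lemma~\ref{InK-03} and the Taylor expansion of $h_j - h_j^{\lambda_0}$ must be assembled without losing that linear rate. In the intermediate regime where $|y|$ is large but still comparable to $M_j^{2/(n-2\sigma)}$, the naive inequality $w_j \geq h_j$ alone is too weak relative to $\varepsilon_0 M_j^{-1}\lambda_0^{2\sigma-n}$, and one must retain the full integral representation \eqref{21SS3-044}, treating the nonlinear integral as a controlled perturbation of the model equation for $U_1$ via the positivity of the kernel $G$ from \eqref{GIS893-01} and the smallness of $K_j - K(0)$ on compact sets.
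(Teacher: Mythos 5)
Your overall strategy --- use $C^2_{\mathrm{loc}}$ convergence $w_j\to U_1$ plus Lemma~\ref{InK-03} to handle a fixed large ball, then treat $|y|$ large separately --- is the same as the paper's, and the way you obtain \eqref{21SS3&8-003} and the compact-annulus part of \eqref{21SS3&8-001} is correct. The gap is in the region $\{|y|>T\}\cap\Omega_j$. The bound \eqref{21SS3&8-001} requires, for $|y|$ large, that
$$
w_j(y)-w_j^{\lambda_0}(y)\;\ge\;\varepsilon_0(|y|-\lambda_0)|y|^{2\sigma-1-n}+\varepsilon_0 M_j^{-1}(\lambda_0^{2\sigma-n}-|y|^{2\sigma-n})\;\gtrsim\;\varepsilon_0|y|^{2\sigma-n}+\varepsilon_0 M_j^{-1}\lambda_0^{2\sigma-n},
$$
and in particular a lower bound of order $|y|^{2\sigma-n}$. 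Your input is $w_j\ge h_j\gtrsim M_j^{-1}$ and $w_j^{\lambda_0}\le C|y|^{2\sigma-n}$, which only gives $w_j-w_j^{\lambda_0}\ge cM_j^{-1}-C|y|^{2\sigma-n}$. This can be negative: e.g.\ at $|y|\sim M_j^{1/(n-2\sigma)}$ one has $|y|^{2\sigma-n}\sim M_j^{-1}\cdot M_j^{2\sigma/(n-2\sigma)}\gg M_j^{-1}$. You sense there is a problem in this intermediate range, but you misattribute it to the $\varepsilon_0M_j^{-1}\lambda_0^{2\sigma-n}$ piece (which $h_j$ does handle) rather than to the $\varepsilon_0(|y|-\lambda_0)|y|^{2\sigma-1-n}\approx\varepsilon_0|y|^{2\sigma-n}$ piece, and your closing sentence about ``retain the full integral representation \ldots as a controlled perturbation'' does not supply the missing estimate.

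What is actually needed, and what the paper does, is to keep the nonlinear integral over $B_{R_1}$: since $w_j\to U_1$ in $C^2(B_{R_1})$ and $K_j\to K(0)$ uniformly there, for $y\in\Omega_j\setminus B_{R_1}$,
$$
w_j(y)\;\ge\;\int_{B_{R_1}}\frac{K_j(z)\,w_j(z)^{\frac{n+2\sigma}{n-2\sigma}}}{|y-z|^{n-2\sigma}}\,dz
\;\approx\;\int_{B_{R_1}}\frac{K(0)\,U_1(z)^{\frac{n+2\sigma}{n-2\sigma}}}{|y-z|^{n-2\sigma}}\,dz
= U_1(y)-\int_{\{|z|\ge R_1\}}\frac{K(0)\,U_1(z)^{\frac{n+2\sigma}{n-2\sigma}}}{|y-z|^{n-2\sigma}}\,dz,
$$
and the tail is bounded by $C R_1^{-2\sigma}|y|^{2\sigma-n}$, so for $R_1$ fixed large one gets $w_j(y)\ge(1-\varepsilon_2)|y|^{2\sigma-n}+\tfrac{\varepsilon_2}{8}w_j(y)$. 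The self-referential slack $\tfrac{\varepsilon_2}{8}w_j(y)$ is then fed with the crude bound $w_j\ge A_0/M_j$ (from $u\ge A_0>0$ on $B_2\setminus\Sigma$), producing both terms of \eqref{21SS3&8-001}, while $w_j^{\lambda_0}(y)\le(1-2\varepsilon_2)|y|^{2\sigma-n}$ is subtracted. This tail estimate is the decisive step your outline omits. A secondary inaccuracy: the claimed expansion $h_j(y)-h_j^{\lambda_0}(y)=M_j^{-1}h(x_j)\lambda_0^{n-2\sigma}(\lambda_0^{2\sigma-n}-|y|^{2\sigma-n})+o(M_j^{-1})$ is not uniform over $\Omega_j$, since for $|y|\sim M_j^{2/(n-2\sigma)}$ the argument of $h$ moves $O(1)$ away from $x_j$; the cruder two-sided bounds $h_j(y)\ge M_j^{-1}\inf h$ and $h_j^{\lambda_0}(y)\le(\lambda_0/|y|)^{n-2\sigma}M_j^{-1}\sup h$ are what one should use (but, as explained, these alone do not close the argument).
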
 

\begin{proof}
Since $w_j$ converges in $C^2$ norm to $U_1=U_0(\cdot - Re)$ on any compact subset of $\mathbb{R}^n$,  by Lemma \ref{InK-03} there exists $\varepsilon_1=\varepsilon_1(n, \sigma, R)>0$ such that for any fixed $R_1 \gg R$, 
\begin{equation}\label{S09hmyf-407=01}
w_j(y) - w_j^{\lambda_0}(y) \geq \varepsilon_1 (|y| - \lambda_0) |y|^{2\sigma-1-n},~~~~~~ \lambda_0 < |y| \leq R_1 
\end{equation}
for sufficiently large $j$.  Clearly,  Lemma \ref{InK-03} also implies that \eqref{21SS3&8-003} is true.   

Next we show that \eqref{21SS3&8-001} holds for $y \in \Omega_j \backslash B_{R_1}$.  Firstly,  it is easy to see that there exists a small $\varepsilon_2=\varepsilon_2(n, \sigma, R)>0$ such that for any $R_1 \gg R$, 
\begin{equation}\label{S09hmyf-407=02}
|  U_1(y) - |y|^{2\sigma-n}  |  \leq \frac{\varepsilon_2}{2} |y|^{2\sigma-n},~~~~~~ |y| \geq R_1
\end{equation}
and
\begin{equation}\label{S09hmyf-407=03}
 U_1^{\lambda_0}(y)  \leq  (1-3\varepsilon_2)  |y|^{2\sigma-n},~~~~~~ |y| \geq R_1. 
\end{equation}
Consequently,  we obtain that   for large $j$, 
\begin{equation}\label{S09hmyf-407=04}
w_j^{\lambda_0}(y)  \leq  (1- 2\varepsilon_2)  |y|^{2\sigma-n},~~~~~~ |y| \geq R_1. 
\end{equation}
On the other hand,  because $w_j$ converges  to $U_1$ in $C^2(B_{R_1})$,  for any $y\in \Omega_j \backslash B_{R_1}$ we have that when $j$ is sufficiently large, 
\begin{equation}\label{S09hmyf-408=01}
\aligned
\Big( 1 - \frac{\varepsilon_2}{8} \Big) w_j(y) & \geq \Big( 1 - \frac{\varepsilon_2}{8} \Big)  \int_{B_{R_1}} \frac{K_j(z) w_j(z)^{\frac{n+2\sigma}{n-2\sigma}}}{|y - z|^{n-2\sigma}} dz \\
& \geq \Big( 1 - \frac{\varepsilon_2}{4} \Big)  \int_{B_{R_1}} \frac{ K(0)w_j(z)^{\frac{n+2\sigma}{n-2\sigma}}}{|y - z|^{n-2\sigma}} dz \\
& \geq \Big( 1 - \frac{\varepsilon_2}{2} \Big)  \int_{B_{R_1}} \frac{ K(0)U_1(z)^{\frac{n+2\sigma}{n-2\sigma}}}{|y - z|^{n-2\sigma}} dz  \\
& \geq  \Big( 1 - \frac{\varepsilon_2}{2} \Big)  U_1(y) -   \int_{ \{ |z|\geq R_1 \} } \frac{K(0)U_1(z)^{\frac{n+2\sigma}{n-2\sigma}}}{|y-z|^{n-2\sigma}} dz. 
\endaligned
\end{equation} 
A simple computation yields that for $y\in \Omega_j \backslash B_{R_1}$, 
$$
\aligned
\int_{ \{ |z|\geq R_1 \} } \frac{U_1(z)^{\frac{n+2\sigma}{n-2\sigma}}}{|y-z|^{n-2\sigma}} dz & \leq \bigg( \int_{ \{ |z|\geq R_1, |y -z|\geq \frac{|y|}{2} \} }  + \int_{ \{ |z|\geq R_1, |y -z| \leq \frac{|y|}{2} \} } \bigg) \frac{U_1(z)^{\frac{n+2\sigma}{n-2\sigma}}}{|y-z|^{n-2\sigma}} dz \\
& \leq \frac{C}{|y|^{n-2\sigma}} \int_{ \{ |z|\geq R_1 \} } \frac{dz}{(1 + |z|^2)^{(n+2\sigma)/2}}  + \frac{C}{|y|^n} \\
& \leq \frac{C}{|y|^{n-2\sigma}} \cdot \frac{1}{R_1^{2\sigma}}
\endaligned
$$
for some positive constant $C = C(n, \sigma, R)$.  This, together with \eqref{S09hmyf-407=02} and \eqref{S09hmyf-408=01}, gives 
\begin{equation}\label{S09hmyf-408=02} 
\aligned
w_j(y) & \geq \frac{\varepsilon_2}{8} w_j(y) +  ( 1 -  \varepsilon_2) |y|^{2\sigma-n} + \frac{\varepsilon_2}{4} |y|^{2\sigma-n} - \frac{C}{|y|^{n-2\sigma}}  \cdot  \frac{1}{R_1^{2\sigma}}  \\
& \geq \frac{\varepsilon_2}{8} w_j(y) + ( 1 -  \varepsilon_2) |y|^{2\sigma-n}  ~~~~~~ \textmd{for}  ~ y\in \Omega_j \backslash B_{R_1}
\endaligned
\end{equation}
by choosing $R_1\gg R$  large enough and then  fixing it.  Moreover, from  the equation \eqref{Int} we have $u(x) \geq 4^{2\sigma-n} \int_{B_2} K(y) u(y)^{\frac{n+2\sigma}{n-2\sigma}} dy=:A_0 > 0$ for all  $x\in B_2 \backslash \Sigma$, so by the definition of $w_j$ we obtain    
\begin{equation}\label{S09hmyf-408=03} 
w_j(y)  \geq \varepsilon_3 M_j^{-1} (\lambda_0^{2\sigma-n} - |y|^{2\sigma-n}) ~~~~~~ \textmd{for}  ~  y \in \Omega_j \backslash B_{R_1}.  
\end{equation}
with some constant $\varepsilon_3=\varepsilon_3(n, \sigma, A_0, R)$.   It follows from \eqref{S09hmyf-407=04}, \eqref{S09hmyf-408=02} and \eqref{S09hmyf-408=03} that  there exists a small $\varepsilon_0>0$ such that for large $j$, 
$$
w_j(y) - w_j^{\lambda_0}(y)  \geq \varepsilon_0 |y|^{2\sigma-n} + \varepsilon_0 M_j^{-1} (\lambda_0^{2\sigma-n} - |y|^{2\sigma-n})~~~~~~ \textmd{for}   ~  y \in \Omega_j \backslash B_{R_1}. 
$$
This together with \eqref{S09hmyf-407=01} implies that \eqref{21SS3&8-001} holds by choosing $\varepsilon_0$ sufficiently small.   Lemma \ref{LAn-01}  is established.  
\end{proof}

Next we will use the moving sphere method for $\lambda \in [R-2, R+2]$ to derive a contradiction. Denote
\begin{equation}\label{21PAI01}
\Pi_j:=  \left\{y\in \mathbb{R}^n :   x_j + M_j^{-\frac{2}{n-2\sigma}} (y-Re)  \in  B_{1} \backslash \Sigma \right\} \subset \Omega_j 
\end{equation} 
and
\begin{equation}\label{21PAI02}
\Sigma_j:=\left\{y\in \mathbb{R}^n :  x_j + M_j^{-\frac{2}{n-2\sigma}} (y-Re)   \in  \Sigma \right\}. 
\end{equation}
Note that we have $\mathcal{L}^n (\Sigma_j)=0$ due to $\mathcal{L}^n (\Sigma)=0$.   It follows from the same arguments as in Lemma 3.1 of \cite{JX19}  that  for all large $j$, there holds
\begin{equation}\label{3-Local3}
h_j^{\lambda}(y) \leq h_j(y)~~~~~~ \forall ~  y\in \Pi_j \backslash \overline{B}_\lambda,  ~ \lambda\in [R-2, R+2]. 
\end{equation}
Let $\varphi_\lambda(y) = w_j(y) - w_j^\lambda(y)$, where we omit $j$ in the notation for brevity.   By \eqref{ABC-0e571}, \eqref{3-Local3} and $K_j \equiv 0$ on $\Omega_j^c \backslash \Sigma_j$,    we have for $\lambda\in [R-2, R+2]$ and $y\in \Pi_j \backslash \overline{ B}_\lambda$ that 
\begin{equation}\label{yh38-01}
\aligned
\varphi_\lambda(y) & \geq \int_{B_\lambda^c} G(0,\lambda; y, z) \left( K_j(z) w_j(z)^{\frac{n+2\sigma}{n-2\sigma}} - K_j(z^\lambda) w_j^\lambda(z)^{\frac{n+2\sigma}{n-2\sigma}} \right) dz  \\
& =  \int_{B_\lambda^c} G(0,\lambda; y, z) K_j(z)\left( w_j(z)^{\frac{n+2\sigma}{n-2\sigma}} -  w_j^\lambda(z)^{\frac{n+2\sigma}{n-2\sigma}} \right) dz \\
& ~~~~~  + \int_{B_\lambda^c} G(0,\lambda; y, z) \left(  K_j(z) - K_j(z^\lambda) \right) w_j^\lambda(z)^{\frac{n+2\sigma}{n-2\sigma}} dz \\
&= \int_{\Pi_j \backslash B_\lambda} G(0, \lambda; y, z) b_\lambda(z) \varphi_\lambda(z) dz + J_\lambda(y) -\Phi_\lambda(y),
\endaligned
\end{equation}
where 
\begin{equation}\label{SL4-312=01}
b_\lambda(y)= K_j(y) \frac{ w_j(y)^{\frac{n+2\sigma}{n-2\sigma}} -  w_j^\lambda(y)^{\frac{n+2\sigma}{n-2\sigma}} }{w_j(y) - w_j^\lambda(y)},
\end{equation}
\begin{equation}\label{SL4-312=02}
\Phi_\lambda(y)= \int_{\Omega_j \backslash B_\lambda} G(0, \lambda; y, z) \left( K_j(z^\lambda) - K_j(z) \right) w_j^\lambda(z)^{\frac{n+2\sigma}{n-2\sigma}} dz 
\end{equation}
and
\begin{equation}\label{SL4-312=03}
\aligned
J_\lambda(y) & = \int_{\Omega_j \backslash \Pi_j} G(0,\lambda; y, z) K_j(z)\left( w_j(z)^{\frac{n+2\sigma}{n-2\sigma}} -  w_j^\lambda(z)^{\frac{n+2\sigma}{n-2\sigma}} \right) dz\\
& ~~~~~ - \int_{\Omega_j^c} G(0,\lambda; y, z) K_j(z^\lambda) w_j^\lambda(z)^{\frac{n+2\sigma}{n-2\sigma}} dz.
\endaligned
\end{equation}
Note that $b_\lambda(y)$  is always non-negative.  Thus, $\varphi_\lambda $ satisfies the integral inequality 
\begin{equation}\label{SL4-312=04}
\varphi_\lambda(y) +  \Phi_\lambda(y) \geq \int_{\Pi_j \backslash B_\lambda} G(0, \lambda; y, z) b_\lambda(z) \varphi_\lambda(z) dz + J_\lambda(y),  ~~~~~ y \in \Pi_j \backslash \overline{B}_\lambda, 
\end{equation}
where $\lambda \in [R-2, R+2]$. 

Now we show some estimates for $\Phi_\lambda$ on $\Pi_j \backslash \overline{B}_\lambda$ in order to apply the moving sphere method to the function $\varphi_\lambda + \Phi_\lambda$.  We first need to establish the estimates on $K_j(z^\lambda) - K_j(z)$ and on $w_j^\lambda(z)$ for $z \in \Omega_j \backslash B_\lambda$.  For this purpose we define a special domain  in $\Omega_j \backslash B_\lambda$. Let  
$$
D_\lambda= \{z \in \mathbb{R}^n : \lambda < |z| < 2\lambda, z_1 > 2 |z^{\prime}| ~\textmd{where} ~ z^{\prime}=(z_2, \dots, z_n) \}.
$$ 

{\it Claim 2: There exists a constant $C>0$ independent of $\lambda$ such that for all large $j$,   
\begin{equation}\label{S4^314=01}
K_j(z^\lambda) - K_j(z) \leq  -C M_j^{-\frac{2}{n-2\sigma}} (|z| - \lambda), ~~~~~ z \in D_\lambda, 
\end{equation}
\begin{equation}\label{S4^314=02}
|K_j(z^\lambda) - K_j(z)| \leq C M_j^{-\frac{2}{n-2\sigma}} (|z| - \lambda), ~~~~~ z \in \Omega_j \backslash B_\lambda. 
\end{equation}}
\begin{proof}
The first estimate follows from the mean value theorem and  $\lim_{x \to 0} \nabla K (x) = \nabla K(0) = e$. The second estimate follows from the mean value theorem.  
\end{proof}

\begin{lemma}\label{Le-hm-074}
There exist constants $C_1, C_2>0$ independent of $\lambda$ such that for all large $j$, 
\begin{equation}\label{S4_314=03}
C_1 |z|^{2\sigma - n} \leq w_j^\lambda(z) \leq C_2 |z|^{2\sigma - n} ~~~~~~ \textmd{for}  ~ z \in \Omega_j \backslash (B_\lambda \cup D_\lambda)
\end{equation}
and
\begin{equation}\label{S4_314=04}
C_1 \left( \frac{\lambda}{|z|} \right)^{n-2\sigma} \left( \frac{1}{1 + (z_1 - \lambda)^2 + |z^{\prime}|^2}\right)^{\frac{n-2\sigma}{2}} \leq w_j^\lambda(z) \leq 2 ~~~~~~ \textmd{for}  ~ z \in D_\lambda. 
\end{equation}
\end{lemma}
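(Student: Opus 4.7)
The plan is to exploit the $C^2_{\mathrm{loc}}(\mathbb{R}^n)$ convergence $w_j \to U_1$, with $U_1(y) = (1 + |y - Re|^2)^{-(n-2\sigma)/2}$, established in Step 1, together with the geometry of the Kelvin inversion $z \mapsto z^\lambda = \lambda^2 z/|z|^2$. Throughout I will use the identity $w_j^\lambda(z) = \lambda^{n-2\sigma} |z|^{2\sigma - n} w_j(z^\lambda)$ and note that for $|z| > \lambda$ we have $|z^\lambda| = \lambda^2/|z| \leq \lambda$, so $z^\lambda$ ranges over the fixed compact set $\overline{B}_{R+2}$ as $\lambda \in [R-2, R+2]$, where the uniform convergence $w_j \to U_1$ applies.

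I first prove \eqref{S4_314=03}. For $z \in \Omega_j \setminus (B_\lambda \cup D_\lambda)$ I would show $z^\lambda$ is uniformly bounded away from the pole $Re$ of $U_1$. Two cases: if $|z| \geq 2\lambda$, then $|z^\lambda| \leq \lambda/2$, giving $|z^\lambda - Re| \geq R - \lambda/2 \geq (R-2)/2$; if $\lambda \leq |z| < 2\lambda$ and $z_1 \leq 2|z'|$, then $z_1/|z| \leq 2/\sqrt{5}$, so $(z^\lambda)_1 = \lambda^2 z_1/|z|^2 \leq 2\lambda/\sqrt{5}$, forcing $|z^\lambda - Re| \geq c_R > 0$ for $R$ large. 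On the compact region thereby swept out, $U_1$ (and hence $w_j$ for $j$ large, by uniform $C^0$ convergence) is bounded above and below by positive constants depending only on $R, n, \sigma$; since $\lambda^{n-2\sigma}$ is itself bounded above and below, \eqref{S4_314=03} follows.

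The upper bound in \eqref{S4_314=04} is essentially immediate: $(\lambda/|z|)^{n-2\sigma} \leq 1$ when $|z| \geq \lambda$, and $\sup_{\overline{B}_\lambda} w_j \leq 2$ for $j$ large since $\max U_1 = 1$. The key step is the lower bound in $D_\lambda$. For this I would apply the standard conformal identity
\[
|z^\lambda - \phi^\lambda| = \frac{\lambda^2 |z - \phi|}{|z| |\phi|}
\]
with the choice $\phi = (\lambda^2/R) e$, so that $\phi^\lambda = Re$; this yields $|z^\lambda - Re| = (R/|z|) \, |z - (\lambda^2/R)e|$. Because $|(\lambda^2/R)e - \lambda e| = \lambda|\lambda - R|/R$ is uniformly bounded for $\lambda \in [R-2, R+2]$ and $R/|z|$ is bounded on $D_\lambda$, a routine computation gives $1 + |z^\lambda - Re|^2 \leq C_R (1 + (z_1 - \lambda)^2 + |z'|^2)$, whence
\[
U_1(z^\lambda) \geq C_R^{-1} (1 + (z_1 - \lambda)^2 + |z'|^2)^{-(n-2\sigma)/2}.
\]
Combined with $w_j(z^\lambda) \geq U_1(z^\lambda)/2$ for $j$ large (uniform since $z^\lambda \in \overline{B}_\lambda$), this yields the lower bound in \eqref{S4_314=04}.

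I expect no serious obstacle: the only substantive step is the algebraic comparison of $|z^\lambda - Re|$ with $|z - \lambda e|$ via the conformal identity; the remainder is bookkeeping to verify that $C_1, C_2$ are independent of $\lambda \in [R-2, R+2]$ and of $j$ large. The main care needed is to ensure every constant introduced depends only on $R, n, \sigma$ and not on $\lambda$, so that the moving-sphere parameter can subsequently be varied throughout $[R-2, R+2]$ without loss of uniformity.
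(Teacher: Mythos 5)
Your proof is correct and follows essentially the same strategy as the paper's: exploit the $C^2_{\mathrm{loc}}$ convergence $w_j\to U_1$, show $|z^\lambda-Re|\geq c\lambda$ uniformly off $D_\lambda$ to get \eqref{S4_314=03}, and bound $1+|z^\lambda-Re|^2\leq C_R\bigl(1+(z_1-\lambda)^2+|z'|^2\bigr)$ on $D_\lambda$ to get \eqref{S4_314=04}. The only variation is cosmetic: where the paper expands $\bigl|\lambda^2 z_1/|z|^2-\lambda\bigr|^2$ by hand and bounds it above by $(z_1-\lambda)^2+|z'|^2$, you reach the same comparison via the conformal identity $|z^\lambda-\phi^\lambda|=\lambda^2|z-\phi|/(|z||\phi|)$ with $\phi=(\lambda^2/R)e$, which is equally valid.
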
 
\begin{proof}
Since the functions $w_j(y)$ converge in $C_{loc}^2(\mathbb{R}^n)$ to $U_1(y)=(1+ |y - Re|^2)^{-\frac{n-2\sigma}{2}}$,  for $z\in D_\lambda$ and for large $j$ we have 
\begin{equation}\label{SAF4_315=001}
 \frac{1}{2} \left(\frac{\lambda}{|z|} \right)^{n-2\sigma} \left( \frac{1}{1 + \left| \frac{\lambda^2 z_1}{|z|^2} - R \right|^2 +  \left| \frac{\lambda^2 z^{\prime}}{|z|^2} \right|^2}\right)^{\frac{n-2\sigma}{2}} \leq \left(\frac{\lambda}{|z|} \right)^{n-2\sigma} w_j \left(  \frac{\lambda^2 z}{|z|^2} \right)  = w_j^\lambda(z) \leq 2. 
\end{equation}
Note that $|\lambda - R| \leq 2$ and for $ |z| > \lambda$, 
$$
\aligned
\left| \frac{\lambda^2 z_1}{|z|^2} - \lambda \right|^2 & = \frac{\lambda^2}{|z|^2}  \left(\frac{\lambda^2}{|z|^2} z_1^2 - 2 z_1 \lambda + \frac{|z|^2}{\lambda^2} \lambda^2 \right) \\
& \leq   (z_1 - \lambda)^2 + (|z|^2 - \lambda^2) \left( 1 - \frac{z_1^2}{|z|^2}\right)  \\
&  \leq  (z_1 - \lambda)^2 + |z^{\prime}|^2, 
\endaligned
$$
from which we obtain
$$
1 + \left| \frac{\lambda^2 z_1}{|z|^2} - R \right|^2 +  \left| \frac{\lambda^2 z^{\prime}}{|z|^2} \right|^2 \leq 10 (1 + (z_1 - \lambda)^2 + |z^{\prime}|^2 ). 
$$
This together with \eqref{SAF4_315=001} gives the proof of \eqref{S4_314=04}.  

Next we establish  \eqref{S4_314=03}. Firstly one can verify  the following elementary estimate:
\begin{equation}\label{SAF4_315=002}
|z^\lambda - Re|^2 \geq C \lambda^2,  ~~~~~~ z \in \Omega_j \backslash (B_\lambda \cup D_\lambda)
\end{equation}
for some constant $C>0$ independent of $\lambda$. Because $w_j(y)$ converges  in $C_{loc}^2(\mathbb{R}^n)$ to $U_1(y)=(1+ |y - Re|^2)^{-\frac{n-2\sigma}{2}}$, we have for large $j$ that 
\begin{equation}\label{SAF4_315=003}
 \frac{1}{2} \left(\frac{\lambda}{|z|} \right)^{n-2\sigma} \left( \frac{1}{1 + \left| z^\lambda - Re\right|^2}\right)^{\frac{n-2\sigma}{2}} \leq  w_j^\lambda(z) \leq 2  \left(\frac{\lambda}{|z|} \right)^{n-2\sigma} \left( \frac{1}{1 + \left| z^\lambda - Re\right|^2}\right)^{\frac{n-2\sigma}{2}}, 
\end{equation}
where $z \in \Omega_j \backslash B_\lambda$. By \eqref{SAF4_315=002}  we get 
$$
w_j^\lambda(z) \leq 2  \left(\frac{\lambda}{|z|} \right)^{n-2\sigma} \left( \frac{1}{1 + C \lambda^2}\right)^{\frac{n-2\sigma}{2}} \leq C_2 |z|^{2\sigma -n},~~~~~~ z \in \Omega_j \backslash (B_\lambda \cup D_\lambda)
$$
for another constant $C_2 >0$ independent of $\lambda$. On the other hand, using $|z^\lambda - Re|\leq 3\lambda$ for  $|z| \geq  \lambda$  we obtain 
$$
w_j^\lambda(z) \geq  \frac{1}{2} \left(\frac{\lambda}{|z|} \right)^{n-2\sigma} \left( \frac{1}{1 + 9\lambda^2}\right)^{\frac{n-2\sigma}{2}} \geq C_1 |z|^{2\sigma -n},~~~~~~ z \in \Omega_j \backslash (B_\lambda \cup D_\lambda)
$$
for some constant $C_1 >0$ independent of $\lambda$.  Lemma \ref{Le-hm-074} is proved.   
\end{proof} 

Lemma \ref{Le-hm-074} tells us that $w_j^\lambda$ is bigger on $D_\lambda$ and is smaller on $\Omega_j \backslash (B_\lambda \cup D_\lambda)$, while $K_j(z^\lambda) - K_j(z)$ is negative on $D_\lambda$ by Claim 2. These combined with Lemma \ref{InK-01} will lead to a crucial fact that $\Phi_\lambda$ is non-positive. Indeed, the positive parts of the integral in \eqref{SL4-312=02} can be controlled by the negative one.  More precisely, we have 

\begin{lemma}\label{LS4-312=01} 
There  exists a constant $C>0$ independent of  $\lambda$ such that for all large $j$, 
$$ 
\Phi_\lambda (y) \leq 
\begin{cases}
- C M_j^{-\frac{2}{n-2\sigma}} (|y| - \lambda) \lambda^{-n} \log \lambda  ~~~~~~ & \textmd{for} ~ y \in B_{4\lambda} \backslash \overline{B}_\lambda, \\
-C M_j^{-\frac{2}{n-2\sigma}} |y|^{2\sigma -n} \lambda^{1-2\sigma} \log \lambda ~~~~~~ & \textmd{for} ~ y \in \Pi_j \backslash \overline{B}_{4\lambda}
\end{cases}
$$
and
$$ 
|\Phi_\lambda (y)| \leq 
\begin{cases}
C M_j^{-\frac{2}{n-2\sigma}} (|y| - \lambda) \lambda^{2\sigma}  ~~~~~~ & \textmd{for} ~ y \in B_{4\lambda} \backslash \overline{B}_\lambda, \\
C M_j^{-\frac{2}{n-2\sigma}} |y|^{2\sigma -n} \lambda^{n+1} ~~~~~~ & \textmd{for} ~ y \in  \Pi_j  \backslash \overline{B}_{4\lambda}. 
\end{cases}
$$
\end{lemma}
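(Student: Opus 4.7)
The strategy is to split the domain of integration in the definition \eqref{SL4-312=02} of $\Phi_\lambda$ into the \emph{good} region $D_\lambda$, where Claim~2 gives the strictly negative bound \eqref{S4^314=01} on $K_j(z^\lambda)-K_j(z)$, and its complement $\Omega_j\setminus(B_\lambda\cup D_\lambda)$, where only the absolute bound \eqref{S4^314=02} is available. The goal on $D_\lambda$ is to extract a negative contribution of the announced order; on the complement it is to show that the (possibly positive) contribution is of strictly lower order, so the sign of the total is preserved.

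On $D_\lambda$ I will combine \eqref{S4^314=01} with the pointwise lower bound \eqref{S4_314=04} on $w_j^\lambda$ (essentially a bubble concentrated near $\lambda e_1$) and the kernel lower bound \eqref{GIK0-02} from Lemma~\ref{InK-01}. For $y\in B_{4\lambda}\setminus\overline B_\lambda$, since $|y|\le 10\lambda$, \eqref{GIK0-02} is applicable and yields, after estimating $|z|^2-\lambda^2\asymp\lambda(|z|-\lambda)$ and $|y-z|\lesssim\lambda$ on $D_\lambda$,
\[
-\Phi_\lambda(y)\gtrsim M_j^{-2/(n-2\sigma)}\frac{|y|-\lambda}{\lambda^{n-2\sigma+2}}\int_{D_\lambda}(|z|-\lambda)^{2}\bigl(1+(z_1-\lambda)^2+|z'|^2\bigr)^{-(n+2\sigma)/2}\,dz.
\]
Passing to the local coordinates $t=z_1-\lambda$, $w=z'$ on the cone $z_1>2|z'|$ and using $|z|-\lambda\approx t+|w|^2/(2\lambda)$, the remaining integral is bounded below by a positive constant times $\log\lambda$ (the logarithm arises from integrating the boundary-concentrated bubble weighted by $(|z|-\lambda)^2$ in the narrow cone over the scales $1\le t\le\lambda$). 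For $y\in\Pi_j\setminus\overline B_{4\lambda}$ I will replace \eqref{GIK0-02} by the exact identity from the proof of Lemma~\ref{InK-01}(2), whose denominator $\eta\asymp|y|^2$ for $z\in D_\lambda$ gives $G(0,\lambda;y,z)\asymp(|z|-\lambda)|y|^{2\sigma-n}\lambda^{-1}$, producing the analogous bound with $|y|^{2\sigma-n}\lambda^{1-2\sigma}\log\lambda$.

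On $\Omega_j\setminus(B_\lambda\cup D_\lambda)$ I will use \eqref{S4^314=02} together with the decay \eqref{S4_314=03}, so that $w_j^\lambda(z)^{(n+2\sigma)/(n-2\sigma)}\le C|z|^{-(n+2\sigma)}$, and estimate $G$ from above by \eqref{GIK0-02} for $|y|\le 10\lambda$ and by the direct definition \eqref{Kel=0620-2021} (or the symmetric application of Remark~\ref{InK-02}) when $|y|\ge 4\lambda$. Splitting further into the annulus $\lambda<|z|\le 2\lambda$ (where the bubble is absent so integrability at $\partial B_\lambda$ is elementary) and the far region $|z|\ge 2\lambda$ (where the genuine $|z|^{2\sigma-n}$ decay of $w_j^\lambda$ is in force), one obtains, in both ranges of $y$, an estimate strictly smaller by a power of $\lambda^{-1}$ than the $D_\lambda$ contribution. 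Combining these two estimates proves the first (signed) inequality.

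For the absolute-value bounds $|\Phi_\lambda(y)|\le C M_j^{-2/(n-2\sigma)}(|y|-\lambda)\lambda^{2\sigma}$ on $B_{4\lambda}\setminus\overline B_\lambda$ and $|\Phi_\lambda(y)|\le C M_j^{-2/(n-2\sigma)}|y|^{2\sigma-n}\lambda^{n+1}$ on $\Pi_j\setminus\overline B_{4\lambda}$, I apply \eqref{S4^314=02} uniformly, insert the pointwise upper bounds $w_j^\lambda\le 2$ on $D_\lambda$ and $w_j^\lambda\le C|z|^{2\sigma-n}$ elsewhere, and use the upper kernel estimate \eqref{GIK0-02} or its large-$|y|$ analogue obtained from Remark~\ref{InK-02} and the identity \eqref{YGmgt-HK-409=04}. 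The main obstacle is the bookkeeping across the four regimes (two ranges of $y$ crossed with two regions of $z$) and the delicate point that $|z|-\lambda$ vanishes on $\partial B_\lambda$ precisely where $w_j^\lambda$ is largest: one must integrate the bubble against the correct power-weight to realise the positive logarithmic factor without losing its sign to the complement.
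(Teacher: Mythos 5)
Your proposal follows the same overall strategy as the paper: split the integral into the good cone $D_\lambda$ and its complement $\Omega_j\setminus(B_\lambda\cup D_\lambda)$, use Claim~2 together with the bubble bound \eqref{S4_314=04} to make the $D_\lambda$ integrand negative, invoke Lemma~\ref{InK-01} for the kernel in both $y$-ranges (with the symmetry from part~(3) for $|y|>4\lambda$), and control the complement by \eqref{S4^314=02}--\eqref{S4_314=03}. You also correctly identify the crucial mechanism: the weighted integral over the narrow cone produces the $\log\lambda$ factor.

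There is, however, a quantitative inaccuracy that should be fixed before this sketch is reliable. You assert that the complement contribution is \emph{strictly smaller by a power of $\lambda^{-1}$} than the $D_\lambda$ contribution. It is not. In the range $\lambda<|y|\le4\lambda$ the $D_\lambda$ term yields $-CM_j^{-\frac{2}{n-2\sigma}}(|y|-\lambda)\lambda^{-n}\log\lambda$, while the complement yields $+CM_j^{-\frac{2}{n-2\sigma}}(|y|-\lambda)\lambda^{-n}$ (likewise $|y|^{2\sigma-n}\lambda^{1-2\sigma}\log\lambda$ versus $|y|^{2\sigma-n}\lambda^{1-2\sigma}$ when $|y|>4\lambda$); both are of the \emph{same} power of $\lambda$, and the sign is saved only because the negative term carries the extra $\log\lambda$, which dominates once $R$ (hence $\lambda\in[R-2,R+2]$) is chosen sufficiently large. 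Since you already derive the $\log\lambda$ gain from the cone integral, the conclusion still goes through, but the comparison you state is not the one that does the work; you must close the argument by enlarging $R$ rather than by invoking a nonexistent power-savings. A secondary caveat: your complement split (annulus $\lambda<|z|\le2\lambda$ versus $|z|\ge2\lambda$) is workable, but when $|y|\le4\lambda$ you still need to isolate the near-diagonal region $|y-z|<(|y|-\lambda)/3$, both to get the factor $(|y|-\lambda)$ from $|z|-\lambda\lesssim|y|-\lambda$ there and to use the kernel bound of Lemma~\ref{InK-01}(1); the paper's $\mathcal{A}_1/\mathcal{A}_2/\mathcal{A}_3$ decomposition exists precisely to handle this, and your proof will need the equivalent.
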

\begin{proof}
Let 
\begin{equation}\label{SL4-312=05}
Q_\lambda(z) = \left( K_j(z^\lambda) - K_j(z) \right) w_j^\lambda(z)^{\frac{n+2\sigma}{n-2\sigma}} ~~~~~ \textmd{for} ~ z \in\Omega_j \backslash B_\lambda. 
\end{equation}
Then $\Phi_\lambda$ can be written as 
$$
\Phi_\lambda(y)= \int_{\Omega_j \backslash B_\lambda} G(0, \lambda; y, z) Q_\lambda(z) dz.  
$$
By Claim 2 and Lemma \ref{Le-hm-074}  we have 
\begin{equation}\label{SAF4_315=004}
Q_\lambda(z) \leq -C M_j^{-\frac{2}{n-2\sigma}} (|z| - \lambda)\left( \frac{1}{1 + (z_1 - \lambda)^2 + |z^{\prime}|^2}\right)^{\frac{n + 2\sigma}{2}} ~~~~~~ \textmd{for} ~ z\in D_\lambda
\end{equation}
and 
\begin{equation}\label{SAF4_315=005}
|Q_\lambda(z)| \leq 
\begin{cases}
C M_j^{-\frac{2}{n-2\sigma}} (|z| - \lambda) ~~~~~~ & \textmd{for} ~ z\in D_\lambda, \\
C M_j^{-\frac{2}{n-2\sigma}} (|z| - \lambda) |z|^{-2\sigma-n} ~~~~~~ & \textmd{for} ~ z \in \Omega_j \backslash (B_\lambda \cup D_\lambda). 
\end{cases} 
\end{equation}
These lead to 
\begin{equation}\label{SAF4_315=006}
\aligned
\Phi_\lambda(y) & \leq - C M_j^{-\frac{2}{n-2\sigma}} \int_{D_\lambda} G(0, \lambda; y, z) (|z| - \lambda)\left( \frac{1}{1 + (z_1 - \lambda)^2 + |z^{\prime}|^2}\right)^{\frac{n + 2\sigma}{2}}  dz\\
& ~~~~~ + C M_j^{-\frac{2}{n-2\sigma}} \int_{\Omega_j \backslash (B_\lambda \cup D_\lambda)} G(0, \lambda; y, z) (|z| - \lambda) |z|^{-2\sigma-n}dz. 
\endaligned
\end{equation}
We consider the following two cases separately. 

\vskip0.10in 

{\it Case 1: } $\lambda <  |y| \leq 4\lambda$.  When $ z\in D_\lambda$ and $|y - z| < \frac{1}{3}(|y| - \lambda)$, by Lemma \ref{InK-01} we have
$$
G(0, \lambda; y, z) \geq C |y -z|^{2\sigma - n} \geq C \frac{(|y| - \lambda)^2}{(|y| - \lambda)^{n-2\sigma+2}} \geq C \frac{(|y|- \lambda)(|z| -\lambda)}{\lambda^{n-2\sigma+2}}. 
$$
When $ z\in D_\lambda$ and $|y - z| \geq  \frac{1}{3}(|y| - \lambda)$, by Lemma \ref{InK-01} we also have
$$
G(0, \lambda; y, z) \geq C \frac{(|y| - \lambda)(|z|^2 - \lambda^2)}{\lambda |y-z|^{n-2\sigma+2}} \geq C \frac{(|y|- \lambda)(|z| -\lambda)}{\lambda^{n-2\sigma+2}}. 
$$
Thus, in either case we have
\begin{equation}\label{SAF4_315=007}
G(0, \lambda; y, z) \geq C \frac{(|y|- \lambda)(|z| -\lambda)}{\lambda^{n-2\sigma+2}}. 
\end{equation}
To estimate $G(0, \lambda; y, z)$ for $z \in \Omega_j \backslash B_\lambda$, we define three sets 
$$
\mathcal{A}_1 = \{z \in \Omega_j \backslash B_\lambda: |z -y|< (|y| - \lambda)/3 \}, 
$$
$$
\mathcal{A}_2 = \{ z \in \Omega_j \backslash B_\lambda: |z - y| \geq (|y| - \lambda)/3 ~ \textmd{and} ~ |z|\leq 8\lambda \}, 
$$
$$
\mathcal{A}_3 = \{z\in \Omega_j \backslash B_\lambda: |z| \geq  8\lambda \}. 
$$
Then,  from Lemma \ref{InK-01}  we have
\begin{equation}\label{SAF4_315=008}
G(0, \lambda; y, z) \leq
\begin{cases}
C |y -z|^{2\sigma -n}  ~~~~~~ & \textmd{if} ~z\in \mathcal{A}_1, \\
C \frac{(|y| - \lambda)(|z|^2 - \lambda^2)}{\lambda|y -z|^{n-2\sigma+2}} \leq C \frac{(|y| - \lambda)(|z| - \lambda)}{|y -z|^{n-2\sigma+2}}~~~~~~ & \textmd{if} ~z\in \mathcal{A}_2,\\
C \frac{(|y| - \lambda)(|z|^2 - \lambda^2)}{\lambda|y -z|^{n-2\sigma+2}} \leq C \frac{|y| - \lambda}{\lambda} |z|^{2\sigma-n}~~~~~~ & \textmd{if} ~z\in \mathcal{A}_3. 
\end{cases}
\end{equation}
Hence, combining \eqref{SAF4_315=006}, \eqref{SAF4_315=007} and \eqref{SAF4_315=008} we get
\begin{equation}\label{SAF4_315=009}
\aligned
\Phi_\lambda(y) & \leq - C M_j^{-\frac{2}{n-2\sigma}} \int_{D_\lambda} \frac{(|y|- \lambda)(|z| -\lambda)^2}{\lambda^{n-2\sigma+2}} \left( \frac{1}{1 + (z_1 - \lambda)^2 + |z^{\prime}|^2}\right)^{\frac{n + 2\sigma}{2}}  dz\\
& ~~~~~ + C M_j^{-\frac{2}{n-2\sigma}} \int_{\mathcal{A}_1 \backslash D_\lambda} |y -z|^{2\sigma -n}  (|z| - \lambda) |z|^{-2\sigma-n}dz\\
& ~~~~~ + C M_j^{-\frac{2}{n-2\sigma}} \int_{\mathcal{A}_2 \backslash D_\lambda}\frac{(|y| - \lambda)(|z| - \lambda)^2}{|y -z|^{n-2\sigma+2}} |z|^{-2\sigma-n}dz\\
& ~~~~~ + C M_j^{-\frac{2}{n-2\sigma}} \int_{\mathcal{A}_3} \frac{|y| - \lambda}{\lambda} |z|^{2\sigma-n} (|z| - \lambda)|z|^{-2\sigma-n} dz\\
& =: I_1 + I_2 + I_3 + I_4. 
\endaligned
\end{equation}
For $I_1$ we have 
\begin{equation}\label{SAF4_316+1}
\aligned
I_1 & \leq - C M_j^{-\frac{2}{n-2\sigma}} \frac{(|y|- \lambda)}{\lambda^{n-2\sigma+2}} \int_{D_\lambda} \frac{(|z| -\lambda)^2}{\left( 1 + (z_1 - \lambda)^2 + |z^{\prime}|^2 \right)^{\frac{n + 2\sigma}{2}}} dz \\
& \leq  - C M_j^{-\frac{2}{n-2\sigma}} \cdot \frac{(|y|- \lambda)}{\lambda^{n-2\sigma+2}} \cdot \frac{1}{\lambda^{2\sigma-2}} \int_{D_\lambda} \frac{(|z| -\lambda)^2}{ \left( 1 + (z_1 - \lambda)^2 + |z^{\prime}|^2 \right)^{\frac{n + 2}{2}}} dz\\
& \leq  - C M_j^{-\frac{2}{n-2\sigma}} (|y|- \lambda) \frac{\log \lambda}{\lambda^n}, 
\endaligned
\end{equation}
where we used the fact $\sigma \geq 1$ in the second inequality.  Since $|z| - \lambda \leq 4(|y| - \lambda)/3$ when $z\in \mathcal{A}_1$, for $\I_2$ we have  
\begin{equation}\label{SAF4_316+2}
\aligned
I_2 & \leq C M_j^{-\frac{2}{n-2\sigma}} \int_{\mathcal{A}_1 \backslash D_\lambda} |y -z|^{2\sigma -n}  (|z| - \lambda) |z|^{-2\sigma-n}dz \\
& \leq C M_j^{-\frac{2}{n-2\sigma}} (|y| - \lambda) \lambda^{-2\sigma-n} \int_{|z-y|\leq \lambda} |y -z|^{2\sigma -n} dz\\
& \leq C M_j^{-\frac{2}{n-2\sigma}} (|y| - \lambda) \frac{1}{\lambda^{n}}.
\endaligned
\end{equation}
Since $|z|- \lambda \leq 4 |z - y|$ when $z\in \mathcal{A}_2$, for $I_3$ we have  
\begin{equation}\label{SAF4_316+3}
\aligned
I_3 & \leq C M_j^{-\frac{2}{n-2\sigma}} (|y| - \lambda) \int_{\mathcal{A}_2 \backslash D_\lambda}\frac{1}{|y -z|^{n-2\sigma}} |z|^{-2\sigma-n}dz\\
& \leq C M_j^{-\frac{2}{n-2\sigma}} (|y| - \lambda) \frac{1}{\lambda^{n}}.
\endaligned
\end{equation}
For $I_4$ we have 
\begin{equation}\label{SAF4_316+4}
I_4 \leq C M_j^{-\frac{2}{n-2\sigma}}(|y| - \lambda) \frac{1}{\lambda}  \int_{\mathcal{A}_3 } |z|^{1-2n} dz \leq C M_j^{-\frac{2}{n-2\sigma}} (|y| - \lambda) \frac{1}{\lambda^{n}}. 
\end{equation} 
Substituting  these estimates into  \eqref{SAF4_315=009} and taking $R$ sufficiently large,  we obtain 
$$
\Phi_\lambda(y) \leq -C M_j^{-\frac{2}{n-2\sigma}} (|y|- \lambda) \frac{\log \lambda}{\lambda^n} ~~~~~~ \textmd{for} ~ \lambda <  |y| \leq 4\lambda.
$$
Next we estimate $|\Phi_\lambda(y)|$. It is clear that we only need to give the estimation of 
$$
\widetilde{I}_1:=\int_{D_\lambda} G(0, \lambda; y, z) |Q_\lambda(z)| dz. 
$$
By \eqref{SAF4_315=005} and \eqref{SAF4_315=008} we get 
$$
\aligned
\widetilde{I}_1 & \leq C M_j^{-\frac{2}{n-2\sigma}} \int_{D_\lambda} G(0, \lambda; y, z)  (|z| - \lambda)  dz \\
& \leq C M_j^{-\frac{2}{n-2\sigma}} \left(  \int_{\mathcal{A}_1} |y -z|^{2\sigma -n}  (|z| - \lambda)  dz + \int_{\mathcal{A}_2} \frac{(|y| - \lambda)(|z| - \lambda)^2}{|y -z|^{n-2\sigma+2}} dz\right)\\
& \leq C M_j^{-\frac{2}{n-2\sigma}}  (|y| -\lambda)  \int_{|z-y|\leq 12\lambda} |y -z|^{2\sigma-n} dz \\
& \leq C M_j^{-\frac{2}{n-2\sigma}}  (|y| -\lambda) \lambda^{2\sigma}, 
\endaligned
$$
where we used the facts that $|z| - \lambda \leq 4(|y| - \lambda)/3$ for  $z\in \mathcal{A}_1$ and $|z|- \lambda \leq 4 |z - y|$ for $z\in \mathcal{A}_2$ in the third inequality.  This,  together with \eqref{SAF4_316+2}, \eqref{SAF4_316+3} and \eqref{SAF4_316+4},  yields 
$$
|\Phi_\lambda(y)| \leq  C M_j^{-\frac{2}{n-2\sigma}}  (|y| -\lambda) \lambda^{2\sigma} ~~~~~~ \textmd{for} ~ \lambda <  |y| \leq 4\lambda. 
$$

\vskip0.10in 

{\it Case 2:} $|y| > 4\lambda$.  When $z \in D_\lambda$, we have $|y - z| > (|z| - \lambda)/3$. By the symmetry of $G(0, \lambda; y, z)$ and $(2)$ of Lemma \ref{InK-01},  we obtain 
\begin{equation}\label{SAF4_316*T1}
G(0, \lambda; y, z) = G(0, \lambda; z, y) \geq C \frac{(|z| -\lambda)(|y|^2 - \lambda^2)}{\lambda |y -z|^{n-2\sigma+2}} \geq C \frac{|z| -\lambda}{\lambda} |y|^{2\sigma-n} ~~~~~~ \textmd{for} ~ z\in D_\lambda. 
\end{equation}
Moreover, we also have
\begin{equation}\label{SAF4_316*T2}
G(0, \lambda; y, z) \leq C |y -z|^{2\sigma-n} ~~~~~~ \textmd{for} ~ z\in \Omega_j \backslash B_\lambda. 
\end{equation}
Therefore, by \eqref{SAF4_315=006}, 
\begin{equation}\label{SAF4_316*T3}
\aligned
\Phi_\lambda(y) & \leq - C M_j^{-\frac{2}{n-2\sigma}} \int_{D_\lambda} \frac{(|z| -\lambda)^2}{\lambda} |y|^{2\sigma-n} \left( \frac{1}{1 + (z_1 - \lambda)^2 + |z^{\prime}|^2}\right)^{\frac{n + 2\sigma}{2}}  dz\\
& ~~~~~ + C M_j^{-\frac{2}{n-2\sigma}} \int_{\Omega_j \backslash (B_\lambda \cup D_\lambda)} |y -z|^{2\sigma-n} (|z| - \lambda) |z|^{-2\sigma-n}dz\\
& =: \textit{II}_1 + \textit{II}_2. 
\endaligned
\end{equation}
For $\textit{II}_1$ we have 
$$
\aligned
\textit{II}_1 & \leq -C M_j^{-\frac{2}{n-2\sigma}} |y|^{2\sigma-n}  \lambda^{-1}  \int_{D_\lambda} \frac{(|z| -\lambda)^2}{ \left( 1 + (z_1 - \lambda)^2 + |z^{\prime}|^2 \right)^{\frac{n + 2\sigma}{2}} }  dz\\
& \leq -C M_j^{-\frac{2}{n-2\sigma}} |y|^{2\sigma-n}  \lambda^{1 - 2\sigma}  \int_{D_\lambda} \frac{(|z| -\lambda)^2}{ \left( 1 + (z_1 - \lambda)^2 + |z^{\prime}|^2 \right)^{\frac{n + 2}{2}} }  dz\\
& \leq  - C M_j^{-\frac{2}{n-2\sigma}} |y|^{2\sigma-n}  \frac{\log \lambda}{\lambda^{2\sigma-1}}, 
\endaligned
$$
where we used the fact $\sigma \geq 1$ in the second inequality. For $\textit{II}_2$  we have 
$$
\aligned
\textit{II}_2 & \leq C M_j^{-\frac{2}{n-2\sigma}} \int_{\Omega_j \backslash (B_\lambda \cup D_\lambda)} |y -z|^{2\sigma-n}  |z|^{1-2\sigma-n}dz \\
& =  C M_j^{-\frac{2}{n-2\sigma}} \sum_{k=1}^4 \int_{\mathcal{S}_k \backslash D_\lambda}|y -z|^{2\sigma-n}  |z|^{1-2\sigma-n}dz, 
\endaligned
$$ 
where 
$$
\mathcal{S}_1 = \{ z \in \Omega_j \backslash B_\lambda: |z| < |y|/2 \}, 
$$
$$
\mathcal{S}_2 = \{ z \in \Omega_j \backslash B_\lambda: |z| > 2|y| \}, 
$$
$$
\mathcal{S}_3 = \{ z \in \Omega_j \backslash B_\lambda: |z-y| \leq  |y|/2 \}, 
$$
$$
\mathcal{S}_4 = \{ z \in \Omega_j \backslash B_\lambda: |z-y| \geq  |y|/2  ~ \textmd{and} ~ |y|/2 \leq |z| \leq  2|y|\}. 
$$
Direct computations give 
$$
\int_{\mathcal{S}_1} |y -z|^{2\sigma-n}  |z|^{1-2\sigma-n}dz \leq C |y|^{2\sigma - n} \int_{\mathcal{S}_1} |z|^{1-2\sigma-n}dz \leq C \frac{|y|^{2\sigma - n} }{\lambda^{2\sigma-1}}~~~ \textmd{if} ~\sigma > \frac{1}{2}, 
$$
$$
\int_{\mathcal{S}_2} |y -z|^{2\sigma-n}  |z|^{1-2\sigma-n}dz  \leq C \int_{\mathcal{S}_2} |z|^{2\sigma-n}  |z|^{1-2\sigma-n}dz \leq C |y|^{1-n} \leq C  \frac{|y|^{2\sigma - n}}{\lambda^{2\sigma-1}}~~~ \textmd{if} ~\sigma \geq  \frac{1}{2},
$$
$$
\int_{\mathcal{S}_3} |y -z|^{2\sigma-n}  |z|^{1-2\sigma-n}dz  \leq C  |y|^{1-2\sigma-n} \int_{\mathcal{S}_3} |y- z|^{2\sigma-n} dz \leq C |y|^{1-n} \leq C  \frac{|y|^{2\sigma - n}}{\lambda^{2\sigma-1}}~~~ \textmd{if} ~\sigma \geq  \frac{1}{2},
$$
and
$$
\int_{\mathcal{S}_4} |y -z|^{2\sigma-n}  |z|^{1-2\sigma-n}dz  \leq C  |y|^{1-2\sigma-n} \int_{\mathcal{S}_4} |y- z|^{2\sigma-n} dz \leq C |y|^{1-n} \leq C  \frac{|y|^{2\sigma - n}}{\lambda^{2\sigma-1}}~~~ \textmd{if} ~\sigma \geq  \frac{1}{2}. 
$$
These estimates imply that 
\begin{equation}\label{SAF4_316*T4}
\textit{II}_2 \leq C M_j^{-\frac{2}{n-2\sigma}} |y|^{2\sigma - n} \frac{1}{\lambda^{2\sigma-1}}. 
\end{equation}
Hence, by taking $R$ sufficiently large  we have  
$$
\Phi_\lambda(y) \leq  - C M_j^{-\frac{2}{n-2\sigma}} |y|^{2\sigma-n}  \frac{\log \lambda}{\lambda^{2\sigma-1}} ~~~~~~ \textmd{for} ~  |y| >  4\lambda.  
$$
To bound $|\Phi_\lambda(y)|$, we only need to give the estimation of 
$$
\widetilde{\textit{II}}_1:=\int_{D_\lambda} G(0, \lambda; y, z) |Q_\lambda(z)| dz. 
$$
By \eqref{SAF4_315=005} and \eqref{SAF4_316*T2} we get 
$$
\aligned
\widetilde{\textit{II}}_1 & \leq	C M_j^{-\frac{2}{n-2\sigma}}  \int_{D_\lambda} |y -z|^{2\sigma-n} (|z| - \lambda) dz \\
&  \leq C  M_j^{-\frac{2}{n-2\sigma}} |y |^{2\sigma-n} \int_{D_\lambda}  (|z| - \lambda) dz \leq  C  M_j^{-\frac{2}{n-2\sigma}} |y |^{2\sigma-n} \lambda^{n+1}, 
\endaligned
$$ 
where we have used $|y-z|\geq |y|/2$ for $z \in D_\lambda$. Combining this with \eqref{SAF4_316*T4} we obtain
$$
|\Phi_\lambda(y)| \leq C  M_j^{-\frac{2}{n-2\sigma}} |y |^{2\sigma-n} \lambda^{n+1} ~~~~~~ \textmd{for} ~  |y| >  4\lambda. 
$$
The proof of Lemma \ref{LS4-312=01} is completed.  
\end{proof}

In the proof of Lemma \ref{LS4-312=01}, $R$ is chosen so large that $\Phi_\lambda$ is negative for any $\lambda \in [R-2, R+2]$.   Recall $\lambda_0= R-2$ and $\lambda_1 =R +2$.   By Lemmas \ref{LAn-01} and \ref{LS4-312=01} we know  that for large $j$, 
$$
\varphi_{\lambda_0}(y) +  \Phi_{\lambda_0}(y) \geq 0,  ~~~~~~~  y \in \Pi_j \backslash \overline{B}_{\lambda_0}. 
$$
We define
$$
\bar{\lambda}:= \sup\{ \mu \geq \lambda_0 ~ | ~ \varphi_{\lambda}(y) +  \Phi_{\lambda}(y) \geq 0, ~ \forall ~ y \in \Pi_j \backslash \overline{B}_{\lambda}, ~ \forall ~ \lambda_0 \leq  \lambda \leq  \mu \}. 
$$
Then,  $\bar{\lambda}$ is well defined and $\bar{\lambda}  \geq \lambda_0$ for all large $j$. Furthermore, from Lemmas \ref{LAn-01} and \ref{LS4-312=01} we have that  $\bar{\lambda} <  \lambda_1$ for all large $j$. Using the continuity we obtain 
$$
\varphi_{\bar{\lambda}}(y) +  \Phi_{\bar{\lambda}}(y) \geq 0,  ~~~~~~~  y \in \Pi_j \backslash \overline{B}_{\bar{\lambda}}. 
$$
This together with Lemma \ref{LS4-312=01} implies that 
\begin{equation}\label{BYhf=01}
\varphi_{\bar{\lambda}}(y) \geq  0,  ~~~~~~~  y \in \Pi_j \backslash \overline{B}_{\bar{\lambda}}. 
\end{equation}
It follows from \eqref{SL4-312=04} that for $\bar{\lambda} \leq \lambda \leq \lambda_1$ and $y\in \Pi_j \backslash \overline{B}_{\lambda}$, 
\begin{equation}\label{y-01}
\varphi_\lambda(y) +  \Phi_\lambda(y) \geq \int_{\Pi_j \backslash B_\lambda} G(0, \lambda; y, z) b_\lambda(z) \varphi_\lambda(z) dz + J_\lambda(y), 
\end{equation}
where $J_\lambda$ is given by   
$$
\aligned
J_\lambda(y) & = \int_{\Omega_j \backslash \Pi_j} G(0,\lambda; y, z) K_j(z)\left( w_j(z)^{\frac{n+2\sigma}{n-2\sigma}} -  w_j^\lambda(z)^{\frac{n+2\sigma}{n-2\sigma}} \right) dz\\
& ~~~~~ - \int_{\Omega_j^c} G(0,\lambda; y, z) K_j(z^\lambda) w_j^\lambda(z)^{\frac{n+2\sigma}{n-2\sigma}} dz.
\endaligned 
$$

\begin{lemma}\label{hhff=01} 
There  exists a constant $C>0$ depending on $R$ such that for any $\lambda_0 \leq \lambda \leq \lambda_1$ and for all large $j$, 
\begin{equation}\label{y-05}
J_\lambda(y)  \geq 
\begin{cases}
C (|y| - \lambda) u(x_j)^{-1}, ~~~~~   &\textmd{if}~   \lambda \leq |y| \leq \lambda_1  +1, \\
C   u(x_j)^{-1}, ~~~~~ & \textmd{if}~ |y| > \lambda_1 +1,  ~ y \in \Pi_j. 
\end{cases}
\end{equation}
\end{lemma}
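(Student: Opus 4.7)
The plan is to split $J_\lambda(y) = J_1(y) + J_2(y)$ according to the two integrals in \eqref{SL4-312=03}, show that $J_1$ carries the asserted positive main term, and that $J_2$ is dominated by it. Set $t_j := M_j^{2/(n-2\sigma)}$ so that the rescaling $\eta = x_j + t_j^{-1}(z-Re)$ has Jacobian $t_j^n$ and carries $\Omega_j \setminus \Pi_j$ onto the annulus $B_2 \setminus B_1$; in particular $|z| \sim t_j$ on this set. The uniform lower bound $u \geq A_0 > 0$ on $B_2 \setminus \Sigma$ used just before \eqref{S09hmyf-408=03} gives $w_j(z) \geq A_0/M_j$ on $\Omega_j \setminus \Pi_j$. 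On the other hand $|z^\lambda| = \lambda^2/|z| = O(t_j^{-1}) \to 0$, and the $C^2_{\mathrm{loc}}$-convergence $w_j \to U_1$ from Step 1 gives $w_j(z^\lambda) \leq 2U_1(0)$, hence $w_j^\lambda(z) \leq C|z|^{2\sigma-n} \leq CM_j^{-2}$. In particular $w_j(z) \geq 2w_j^\lambda(z)$ for all large $j$, so the integrand of $J_1$ is pointwise bounded below by $c M_j^{-(n+2\sigma)/(n-2\sigma)}\,K_j(z)\,G(0,\lambda;y,z)$.

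Since $K_j \geq \inf K > 0$, proving the lemma for $J_1$ reduces to the lower bounds $\int_{\Omega_j \setminus \Pi_j} G(0,\lambda;y,z)\,dz \gtrsim (|y|-\lambda)\,t_j^{2\sigma}$ when $\lambda \leq |y| \leq \lambda_1+1$, and $\gtrsim t_j^{2\sigma}$ when $|y| > \lambda_1 + 1$ and $y \in \Pi_j$; multiplying by $cM_j^{-(n+2\sigma)/(n-2\sigma)}$ then produces the required $M_j^{-1} = u(x_j)^{-1}$ factor. For the first bound and for the subcase $\lambda_1+1 < |y| \leq 10\lambda$ of the second, Lemma \ref{InK-01}(2) gives $G(0,\lambda;y,z) \geq c(|y|-\lambda)(|z|^2-\lambda^2)/(\lambda|y-z|^{n-2\sigma+2})$, and since $|y-z| \sim |z| \sim t_j$ on the bulk of $\Omega_j \setminus \Pi_j$, integration against the $t_j^n$ volume yields the claim. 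For the subcase $|y| > 10\lambda$ of the second bound I use the symmetry in Lemma \ref{InK-01}(3) together with the explicit representation
\begin{equation*}
G(0,\lambda;y,z) = \frac{n-2\sigma}{2}\cdot\frac{(|y|^2-\lambda^2)(|z|^2-\lambda^2)}{\lambda^2\,\eta_*^{(n-2\sigma+2)/2}}, \quad \eta_* \leq C\bigl(|y-z|^2 + (|y|^2-\lambda^2)(|z|^2-\lambda^2)/\lambda^2\bigr),
\end{equation*}
derived inside the proof of Lemma \ref{InK-01}; restricting to the subset $\{z \in \Omega_j \setminus \Pi_j : |y-z| \gtrsim \max(|y|,|z|)\}$, which still has measure $\gtrsim t_j^n$, a direct computation yields $\gtrsim t_j^{2\sigma}$.

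For $J_2$, on $\Omega_j^c$ one has $|z| \gtrsim t_j$, the same Kelvin computation gives $w_j^\lambda(z) \leq C|z|^{2\sigma-n}$, and the explicit formula above yields $G(0,\lambda;y,z) \leq C(|y|^2-\lambda^2)/(\lambda^2|z|^{n-2\sigma})$ whenever $|z| \gg |y|$. Using $|y| \leq Ct_j$ on $\Pi_j$, the bound $|J_2(y)| \leq C(|y|^2-\lambda^2)\int_{|z|\geq t_j}|z|^{-2n}\,dz \leq Ct_j^{2-n}$ holds, which, thanks to $\sigma \geq 1$, is $o(M_j^{-1})$ and therefore absorbed by the lower bound on $J_1$. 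The delicate point is the $|y| > 10\lambda$ subcase above, where $|y|$ may be as large as the diameter of $\Pi_j$ so that Lemma \ref{InK-01}(2) degrades; the explicit formula for $G$ together with a careful choice of integration sub-region in $\Omega_j \setminus \Pi_j$ is what maintains the uniform $t_j^{2\sigma}$ lower bound and yields the lemma.
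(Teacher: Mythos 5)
You follow the same skeleton as the paper: $u\geq A_0$ on $B_2\setminus\Sigma$ and the decay $w_j^\lambda\lesssim M_j^{-2}$ on $\Omega_j\setminus\Pi_j$ reduce the first integral of $J_\lambda$ to a lower bound on $\int_{\Omega_j\setminus\Pi_j}G(0,\lambda;y,z)\,dz$ over a set of measure $\sim t_j^n$, and the second integral is absorbed. For $\lambda\leq|y|\leq 10\lambda$ your use of Lemma~\ref{InK-01}(2) is sound and matches the paper's~\eqref{K011}.

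The gap is in the subcase $|y|>10\lambda$. Write $A:=|y-z|^2$, $B:=(|y|^2-\lambda^2)(|z|^2-\lambda^2)/\lambda^2$, $p:=(n-2\sigma)/2$. The mean-value identity you invoke reads $G=pB\,\eta_*^{-p-1}$ with $\eta_*\in[A,A+B]$, so the only lower bound it yields is $G\geq pB(A+B)^{-p-1}$. Here $y\in\Pi_j$ can have $|y|$ up to $\sim t_j$ and $|z|\sim t_j$, so $B/A\gtrsim |y|^2/\lambda^2\gg 1$ on the \emph{entire} integration set (not just a sub-region), and the mean-value bound degenerates to $G\gtrsim B^{-p}$, which is smaller than the correct order $A^{-p}=|y-z|^{2\sigma-n}$ by the vanishing factor $(A/B)^p\sim(\lambda/|y|)^{n-2\sigma}$. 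Concretely, for $|y|\sim t_j$ your restricted integral is only $\gtrsim\lambda^{n-2\sigma}\,t_j^{4\sigma-n}$, which since $n>2\sigma$ is smaller than the needed $t_j^{2\sigma}$ by the factor $(\lambda/t_j)^{n-2\sigma}\to0$, and no choice of sub-region of $\Omega_j\setminus\Pi_j$ repairs this because the deficiency is in the pointwise bound on $G$, not in the measure. What is missing is the paper's estimate~\eqref{K022}. For $|y|\geq\lambda_1+1$, $|z|\geq\lambda_1+2$, $\lambda\leq\lambda_1$ one has $B/A\geq c(R)>0$, and then the \emph{exact} formula
\[
G=A^{-p}-(A+B)^{-p}=A^{-p}\Bigl(1-\bigl(1+B/A\bigr)^{-p}\Bigr)\;\geq\;\Bigl(1-(1+c(R))^{-p}\Bigr)\,|y-z|^{2\sigma-n}
\]
gives a lower bound of the correct magnitude uniformly in $B/A$ (indeed it only improves as $B/A$ grows, whereas the mean-value bound deteriorates). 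Substituting this bound on $G$ in your own scheme, together with the measure estimate you already have, yields the lemma.
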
 
\begin{proof} 
For any $z\in \mathbb{R}^n \backslash (\Pi_j \cup \Sigma_j)$  and $\lambda_0 \leq \lambda \leq \lambda_1 $,  we have $|z| \geq \frac{1}{2} u(x_j)^{\frac{2}{n-2\sigma}}$ for large $j$ and hence  
$$
w_j^\lambda(z) \leq \left( \frac{\lambda}{|z|} \right)^{n-2\sigma} \max_{B_{\lambda_1}} w_j \leq C u(x_j)^{-2},  
$$
where $C>0$ depends on $R$.  On the other hand,  by the equation \eqref{Int} we have
\begin{equation}\label{Esi-301}
u(x) \geq 4^{2\sigma-n} \int_{B_2} K(y) u(y)^{\frac{n+2\sigma}{n-2\sigma}} dy=:A_0 > 0 ~~~~~ \textmd{for} ~ \textmd{all}  ~ x\in B_2 \backslash \Sigma,
\end{equation} 
and by the definition of $w_j$, we obtain
\begin{equation}\label{y-03}
w_j(y) \geq \frac{A_0}{u(x_j)} ~~~~~~ \textmd{for} ~ y \in \Omega_j \backslash \Pi_j. 
\end{equation}
Therefore, for large $j$ we have 
$$
w_j(z)^{\frac{n+2\sigma}{n-2\sigma}}  -  w_j^\lambda(z)^{\frac{n+2\sigma}{n-2\sigma}} \geq \frac{1}{2} w_j(z)^{\frac{n+2\sigma}{n-2\sigma}} ~~~~~~ \textmd{in} ~ \Omega_j \backslash \Pi_j. 
$$
Since $G(0,\lambda;y,z) =0$ for  $|y|=\lambda$ and 
$$
y\cdot \nabla_y G(0,\lambda;y,z) \Big|_{|y|=\lambda} = (n-2\sigma) |y-z|^{2\sigma-n-2}(|z|^2 - |y|^2) >0
$$
for $|z| \geq {\lambda}_1 +2$,  and by  the positivity and smoothness of $G$ we obtain
\begin{equation}\label{K011}
\frac{\delta_1}{|y -z|^{n-2\sigma}} (|y| - \lambda) \leq G(0,\lambda; y, z) \leq  \frac{\delta_2}{|y -z|^{n-2\sigma}} (|y| - \lambda)
\end{equation}
for $\lambda_0 \leq \lambda  \leq |y| \leq \lambda_1 +1$ and  $\lambda_1+2 \leq |z| \leq  M <\infty$, where $0< \delta_1\leq \delta_2 < \infty$.  Moreover,   if $M$ is large, then 
$$
0< c \leq y \cdot \nabla_y( |y-z|^{n-2\sigma} G(0,\lambda;y,z)) \leq C < \infty
$$
for all  $|z|\geq M $ and $\lambda_0 \leq \lambda \leq |y| \leq \lambda_1 +1$. 
Hence,  \eqref{K011} also holds for $\lambda_0 \leq \lambda \leq |y| \leq \lambda_1 +1$ and  $|z|\geq M$.  Moreover, by the definition of $G(0,\lambda;y,z)$, we can verify that for $|y|\geq \lambda_1 +1$ and $|z| \geq  \lambda_1 +2$,
\begin{equation}\label{K022}
\frac{\delta_3}{|y -z|^{n-2\sigma}}  \leq G(0,\lambda; y, z) \leq  \frac{1}{|y -z|^{n-2\sigma}} 
\end{equation}
for some $\delta_3\in (0, 1)$.

Denote $\rho_j:=u(x_j)^{\frac{2}{n-2\sigma}} $ and  $A_1:=\min_{B_2} K >0$.   Then,   for $\lambda \leq |y| \leq \lambda_1 +1$ and for  large $j$,   we have
$$
\aligned
J_\lambda(y) & \geq \frac{A_1}{2}\left( \frac{A_0}{u(x_j)} \right)^{\frac{n+2\sigma}{n-2\sigma}} \int_{\Omega_j \backslash \Pi_j}  \frac{\delta_1}{|y -z|^{n-2\sigma}} (|y| - \lambda)  dz \\
&~~~~~ - C \int_{\Omega_j^c} \frac{\delta_2}{|y -z|^{n-2\sigma}} (|y| - \lambda)  \left(  \frac{\lambda}{|z|}\right)^{n+2\sigma} dz \\
& \geq C (|y| - \lambda)  u(x_j)^{-\frac{n+2\sigma}{n-2\sigma}}  \int_{\{\frac{5}{4} \rho_j \leq  |z| \leq \frac{7}{4}\rho_j \} \backslash \Sigma_j} \frac{1}{|y -z|^{n-2\sigma}}  dz \\
& ~~~~~ -C (|y| - \lambda) \int_{\{ |z| \geq \frac{7}{4}\rho_j \} \cup \Sigma_j}  \frac{1}{|y -z|^{n-2\sigma}} \left( \frac{1}{|z|}\right)^{n+2\sigma} dz\\
& \geq C (|y| - \lambda) u(x_j)^{-1} - C (|y| - \lambda) u(x_j)^{-\frac{2n}{n-2\sigma}} \\
& \geq C (|y| - \lambda) u(x_j)^{-1}, 
\endaligned
$$
where we have used $\mathcal{L}^n(\Sigma_j) =0$  and $u(x_j)  \to \infty$ as $j\to \infty$.   Similarly, for $|y| \geq \lambda_1 +1$ and $y \in \Pi_j$, we have
$$
J_\lambda(y)\geq C  u(x_j)^{-1} - C  u(x_j)^{-\frac{2n}{n-2\sigma}}  \geq C  u(x_j)^{-1}. 
$$
 Lemma \ref{hhff=01} is established. 
\end{proof}

By \eqref{BYhf=01}, \eqref{y-01} and \eqref{y-05},  there exists a small $\varepsilon_1 \in (0, \lambda_1-\bar{\lambda})$  (which depends on $j$)  such that 
$$
\varphi_{\bar{\lambda}}(y) +  \Phi_{\bar{\lambda}}(y) \geq J_{\bar{\lambda}}(y) \geq C u(x_j)^{-1} \geq \frac{\varepsilon_1}{|y|^{n-2\sigma}} ~~~~~~ \forall ~ |y| \geq  \lambda_1 + 1,  y \in \Pi_j. 
$$
By the above estimate and the explicit formulas for  $w_j^{\lambda}(y)$ and $\Phi_\lambda(y)$, there exists $\varepsilon_2 \in (0, \varepsilon_1)$ such that for any $\bar{\lambda} \leq \lambda \leq \bar{\lambda} +\varepsilon_2$, 
\begin{equation}\label{1wj}
\aligned
\varphi_\lambda(y) +  \Phi_\lambda(y)  & = [\varphi_{\bar{\lambda}}(y)  +  \Phi_{\bar{\lambda}}(y)] + [\Phi_{\lambda}(y) - \Phi_{\bar{\lambda}}(y)] + [w_j^{\bar{\lambda}}(y) - w_j^\lambda(y) ] \\
& \geq  \frac{\varepsilon_1}{2|y|^{n-2\sigma}}~~~~~~ \forall ~ |y| \geq  \lambda_1 +1,  ~ y\in \Pi_j. 
\endaligned
\end{equation}
This,  together with Lemma \ref{LS4-312=01},  also implies that for any $\bar{\lambda} \leq \lambda \leq \bar{\lambda} +\varepsilon_2$, 
\begin{equation}\label{MHYBF-01}
w_j(y) - w_j^\lambda(y) \geq  \frac{\varepsilon_1}{2|y|^{n-2\sigma}}~~~~~~ \forall ~ |y| \geq  \lambda_1 +1,  ~ y\in \Pi_j. 
\end{equation}
For $\varepsilon \in \left( 0, \varepsilon_2 \right)$ which we choose below, by \eqref{y-01},  \eqref{y-05} and \eqref{MHYBF-01} we have, for $\bar{\lambda} \leq \lambda \leq \bar{\lambda} +\varepsilon $ and for $\lambda \leq |y| \leq \lambda_1 +1$, 
$$
\aligned
\varphi_\lambda(y) +  \Phi_\lambda(y)   & \geq \int_{\lambda \leq |z| \leq \lambda_1 +1}  G(0,\lambda; y, z) K_j(z) \left( w_j(z)^{\frac{n+2\sigma}{n-2\sigma}} - w_j^\lambda(z)^{\frac{n+2\sigma}{n-2\sigma}} \right) dz\\
& ~~~ +     \int_{\lambda_1 +2  \leq |z| \leq \lambda_1 +3}  G(0,\lambda; y, z) K_j(z) \left( w_j(z)^{\frac{n+2\sigma}{n-2\sigma}} - w_j^\lambda(z)^{\frac{n+2\sigma}{n-2\sigma}} \right) dz \\
& \geq -C \int_{\lambda \leq |z| \leq \lambda +\varepsilon}   G(0,\lambda; y, z) (|z| -\lambda) dz\\
& ~~~ + \int_{\lambda+\varepsilon \leq |z| \leq \lambda_1 +1}  G(0,\lambda; y, z) K_j(z) \left( w_j^{\bar{\lambda}}(z)^{\frac{n+2\sigma}{n-2\sigma}} - w_j^\lambda(z)^{\frac{n+2\sigma}{n-2\sigma}} \right) dz \\
& ~~~ + A_1\int_{\lambda_1 +2  \leq |z| \leq \lambda_1 +3 }  G(0,\lambda; y, z) \left( w_j(z)^{\frac{n+2\sigma}{n-2\sigma}} -  w_j^\lambda(z)^{\frac{n+2\sigma}{n-2\sigma}} \right) dz,
\endaligned
$$
where $A_1=\min_{B_2} K >0$, and we also used 
$$
| w_j(z)^{\frac{n+2\sigma}{n-2\sigma}} - (w_j)_\lambda(z)^{\frac{n+2\sigma}{n-2\sigma}}  |\leq C(|z| - \lambda)
$$
in the second inequality. By \eqref{MHYBF-01}  there exists $\hat{\delta} >0$ (which depends on $j$) such that
$$
w_j(z)^{\frac{n+2\sigma}{n-2\sigma}} - w_j^\lambda(z)^{\frac{n+2\sigma}{n-2\sigma}} \geq \hat{\delta} ~~~~~~ \textmd{for}  ~ \lambda_1 +2  \leq |z| \leq \lambda_1 +3.  
$$
Since $\|w_j\|_{C^1(B_{\lambda_1 + 2})}\leq C$ and $\|K_j\|_{L^\infty(B_{\lambda_1 + 2})} \leq C $ for some constant $C$ independent of $j$,  there exists some constant $C>0$ independent of both $\varepsilon$ and $j$  such that for any $\bar{\lambda} \leq \lambda \leq \bar{\lambda} +\varepsilon$, 
$$
K_j(z)| w_j^{\bar{\lambda}}(z)^{\frac{n+2\sigma}{n-2\sigma}} - w_j^\lambda(z)^{\frac{n+2\sigma}{n-2\sigma}}  |\leq C(\lambda - \bar{\lambda})\leq C \varepsilon~~~~~ \forall ~ \lambda \leq |z| \leq  \lambda_1 +1.
$$
For any  $\lambda \leq |y| \leq \lambda_1+1$, one can estimate the integrals of the kernel $G$ (or, see \cite{JX19}) as follows:   
$$
\aligned
\int_{\lambda+\varepsilon  \leq |z| \leq \lambda_1 +1}  G(0,\lambda; y, z) dz &  \leq \left|  \int_{\lambda+\varepsilon \leq |z| \leq \lambda_1 +1}  \left( \frac{1}{|y -z|^{n-2\sigma}} -  \frac{1}{|y^{\lambda} -z|^{n-2\sigma}}\right) dz  \right| \\
& ~~~~ + \int_{\lambda+\varepsilon \leq |z| \leq \lambda_1 +1}  \left|  \left(\frac{\lambda}{|y|}\right)^{n-2\sigma} -1 \right|   \frac{1}{|y^{\lambda} -z|^{n-2\sigma}} dz\\
& \leq C (\varepsilon^{2\sigma-1}  +  |\ln \varepsilon|  +1) (|y| - \lambda) 
\endaligned
$$
and 
$$
\aligned
\int_{ \lambda \leq |z| \leq \lambda +\varepsilon }   G(0,\lambda; y, z) (|z| -\lambda) dz & \leq \left|  \int_{ \lambda \leq |z| \leq \lambda +\varepsilon }   \left( \frac{|z| -\lambda}{|y -z|^{n-2\sigma}} -  \frac{|z| -\lambda}{|y^{\lambda} -z|^{n-2\sigma}}\right) dz  \right| \\
& ~~~ + \varepsilon  \int_{ \lambda \leq |z| \leq \lambda +\varepsilon}  \left|  \left(\frac{\lambda}{|y|}\right)^{n-2\sigma} -1 \right|   \frac{1}{|y^{\lambda} -z|^{n-2\sigma}} dz\\
& \leq C (|y| - \lambda) \varepsilon^{2\sigma/n} + C\varepsilon(|y| - \lambda) \\
& \leq C (|y| - \lambda) \varepsilon^{2\sigma/n}. 
\endaligned
$$
Therefore,   by  \eqref{K011}  we have  for  $\lambda < |y| \leq \lambda_1 + 1$ that 
$$
\aligned
 \varphi_\lambda(y) +  \Phi_\lambda(y) & \geq -C \varepsilon^{2\sigma/n} (|y| - \lambda) + \delta_1\hat{\delta} (|y| - \lambda) \int_{\lambda_1 +2  \leq |z| \leq \lambda_1 +3} \frac{1}{|y-z|} dz\\
&  \geq \left(\delta_1\hat{\delta} c -C \varepsilon^{2\sigma/n} \right) (|y| - \lambda) \geq 0
\endaligned
$$
if $\varepsilon$ is sufficiently small.  This  and  \eqref{1wj}  contradict the definition of $\bar{\lambda}$ .  Hence the proof of Step 2 is finished. 

\vskip0.10in

{\it Step 3. } In this step, we will complete the proof of Theorem \ref{IEthm01} under the assumption of (K1).  By Step 2 we have $\lim_{j\to \infty} |\nabla K(x_j)| = |\nabla K(0)| = 0$. From the assumption (K1) we know that there exists a neighborhood $N$ of $0$ such that \eqref{K1-0} and \eqref{K1-1} hold.  Here we can suppose $N=B_2$, since otherwise we just consider the integral equation on $N$ and then make a rescaling argument. That is,  we  suppose  $K$ satisfies 
\begin{equation}\label{K1hy-0}
c_1 |y |^{\alpha-1} \leq |\nabla K(y)| \leq c_2 |y |^{\alpha-1} ~~~~~~  \textmd{for} ~ y \in B_2, 
\end{equation}
and for any $\varepsilon>0$, there exists $\delta=\delta(\varepsilon)$ such that for $|z - y|< \delta|y|$ we have
\begin{equation}\label{K1hy-1}
|\nabla K(z) - \nabla K(y)| < \varepsilon |y|^{\alpha -1}, 
\end{equation}
where $\alpha >1$, $y, z \in B_2$,  $c_1$ and $c_2$ are two positive constants. 

Without loss of generality, we may assume
\begin{equation}\label{YFmgh=01}
\lim_{j \to \infty} \frac{\nabla K(x_j)}{|\nabla K(x_j)|} = e = (1, 0, \dots, 0). 
\end{equation} 
As in Step 2, let $w_j$ and $h_j$ be defined as in \eqref{21SS3-01} and let  $\Omega_j$ be defined as in \eqref{hhmm-9e7=5}.  Then $w_j$ still satisfies \eqref{21SS3-044} on  $\Omega_j $ where $K_j$ is defined as in \eqref{21SS3-033}. Moreover, on every compact subset of $\mathbb{R}^n$,   $w_j$ converges in $C^2$ norm to $U_1=U_0(\cdot - Re)$ by  Step 1.  We also extend $w_j$ to be identically $0$ outside $\Omega_j$ and $K$ to be identically $0$ outside $B_2$. 

Define $w_j^\lambda$ and $h_j^\lambda$ as in \eqref{21SS3-022}, then $w_j^\lambda$ satisfies \eqref{21SS3-055}.  It is clear that Lemma \ref{LAn-01} still holds.  Let $\varphi_\lambda(y) =w_j(y)  -w_j^\lambda(y)$ with $\lambda \in [R-2, R+2]$,  then $\varphi_\lambda$ satisfies the following integral inequality for  large $j$,  
\begin{equation}\label{Ste3-BFh-01}
\varphi_\lambda(y) +  \Phi_\lambda(y) \geq \int_{\Pi_j \backslash B_\lambda} G(0, \lambda; y, z) b_\lambda(z) \varphi_\lambda(z) dz + J_\lambda(y),  ~~~~~ y \in \Pi_j \backslash \overline{B}_\lambda, 
\end{equation}
where $\Pi_j$, $b_\lambda$, $\Phi_\lambda$  and $J_\lambda$ are respectively given as in \eqref{21PAI01}, \eqref{SL4-312=01}, \eqref{SL4-312=02} and \eqref{SL4-312=03}.

In order to apply the moving sphere method,  as in Step 2,  we need to establish some estimates for $\Phi_\lambda$ on $\Pi_j \backslash \overline{B}_\lambda$.  Define a special domain  $D_\lambda$ in $\Omega_j \backslash B_\lambda$ as follows,   
$$
D_\lambda = \{z \in \mathbb{R}^n : \lambda < |z| < 2\lambda, z_1 > 2 |z^{\prime}| ~\textmd{where} ~ z^{\prime}=(z_2, \dots, z_n) \}.
$$ 
Define $l_j := M_j^{\frac{2}{n-2\sigma}}|x_j|$ where $M_j=u(x_j)$, then $l_j \to \infty$ and so $l_j \gg R$ when $j$ is large.    Under our current assumption \eqref{YFmgh=01},  we have    

\vskip0.10in 

{\it Claim 3: There exists a constant $C>0$ independent of $\lambda$ such that for all large $j$,   
\begin{equation}\label{S4^m317=01}
K_j(z^\lambda) - K_j(z) \leq  -C M_j^{-\frac{2}{n-2\sigma}} |x_j|^{\alpha -1} (|z| - \lambda), ~~~~~ z \in D_\lambda. 
\end{equation}
\begin{equation}\label{S4^m317=02}
|K_j(z^\lambda) - K_j(z)| \leq C M_j^{-\frac{2}{n-2\sigma}} |x_j|^{\alpha -1} (|z| - \lambda), ~~~~~ z \in B_{2l_j} \backslash \overline{B}_\lambda. 
\end{equation}
\begin{equation}\label{S4^m317=03}
|K_j(z^\lambda) - K_j(z)| \leq C M_j^{-\frac{2\alpha}{n-2\sigma}} |z|^\alpha, ~~~~~ z \in \Omega_j \backslash B_{2l_j}. 
\end{equation}
}
\begin{proof}
(1)  For $z \in D_\lambda$,  by the mean value theorem and the assumptions \eqref{K1hy-0}-\eqref{K1hy-1} on $K$  we obtain that for $j$ large, 
$$
\aligned
K_j(z^\lambda) - K_j(z) & = M_j^{-\frac{2}{n-2\sigma}} \nabla K\Big( x_j + M_j^{-\frac{2}{n-2\sigma}}\big( (1-\theta)z^\lambda + \theta z -Re \big)  \Big)  \cdot (z^\lambda  - z) \\
& \leq M_j^{-\frac{2}{n-2\sigma}} \nabla K(x_j)\cdot (z^\lambda -z) + \varepsilon M_j^{-\frac{2}{n-2\sigma}}  |x_j|^{\alpha-1}|z^\lambda -z| \\
& \leq -C M_j^{-\frac{2}{n-2\sigma}} |x_j|^{\alpha-1} (|z| - \lambda)
\endaligned
$$
by taking $\varepsilon$ sufficiently small, where $\theta \in (0, 1)$ and $C>0$ is a constant.

(2) For $z \in B_{2l_j} \backslash \overline{B}_\lambda$,  by the mean value theorem and the assumption \eqref{K1hy-0} on $K$ we obtain that 
$$
\aligned
|K_j(z^\lambda) - K_j(z)| & \leq  M_j^{-\frac{2}{n-2\sigma}} \Big| \nabla K\Big( x_j + M_j^{-\frac{2}{n-2\sigma}}\big( (1-\theta)z^\lambda + \theta z -Re \big)  \Big)  \Big| |z^\lambda  - z| \\
& \leq C M_j^{-\frac{2}{n-2\sigma}} |x_j|^{\alpha-1} (|z| - \lambda)
\endaligned
$$
for some constant $C>0$,  where $\theta \in (0, 1)$.

(3) For $z \in \Omega_j \backslash B_{2l_j}$,  by the mean value theorem and the assumption \eqref{K1hy-0} on $K$ we obtain that 
$$
\aligned
|K_j(z^\lambda) - K_j(z)| & \leq  M_j^{-\frac{2}{n-2\sigma}} \Big| \nabla K\Big( x_j + M_j^{-\frac{2}{n-2\sigma}}\big( (1-\theta)z^\lambda + \theta z -Re \big)  \Big)  \Big| |z^\lambda  - z| \\
&\leq C M_j^{-\frac{2}{n-2\sigma}} \Big| x_j + M_j^{-\frac{2}{n-2\sigma}}\big( (1-\theta)z^\lambda + \theta z -Re \big) \Big|^{\alpha-1}  |z^\lambda  - z| \\
& \leq C M_j^{-\frac{2\alpha}{n-2\sigma}} |z|^{\alpha}
\endaligned
$$
for  some  constant  $C>0$,  where $\theta \in (0, 1)$.   Claim 3 is proved.  
\end{proof} 

It is also clear that Lemma \ref{Le-hm-074} still holds.  If we let 
$$
Q_\lambda(z) = \left( K_j(z^\lambda) - K_j(z) \right) w_j^\lambda(z)^{\frac{n+2\sigma}{n-2\sigma}} ~~~~~ \textmd{for} ~ z \in\Omega_j \backslash B_\lambda, 
$$
then by Claim 3 and Lemma \ref{Le-hm-074} we have
\begin{equation}\label{ST=-kfh01}
Q_\lambda(z) \leq -C M_j^{-\frac{2}{n-2\sigma}} |x_j|^{\alpha-1} (|z| - \lambda)\left( \frac{1}{1 + (z_1 - \lambda)^2 + |z^{\prime}|^2}\right)^{\frac{n + 2\sigma}{2}} ~~~~~~ \textmd{for} ~ z\in D_\lambda
\end{equation}
and 
\begin{equation}\label{ST=-kfh02}
|Q_\lambda(z)| \leq 
\begin{cases}
C M_j^{-\frac{2}{n-2\sigma}} |x_j|^{\alpha-1} (|z| - \lambda) ~~~~~~ & \textmd{for} ~ z\in D_\lambda, \\
C M_j^{-\frac{2}{n-2\sigma}} |x_j|^{\alpha-1} (|z| - \lambda) |z|^{-2\sigma-n} ~~~~~~ & \textmd{for} ~ z \in B_{2l_j} \backslash (B_\lambda \cup D_\lambda), \\
C M_j^{-\frac{2\alpha}{n-2\sigma}} |z|^{\alpha-2\sigma-n}~~~~~~ & \textmd{for} ~ z\in \Omega_j \backslash B_{2l_j}. 
\end{cases} 
\end{equation}
We split $\Phi_\lambda(y)$ into two parts:   
\begin{equation}\label{S-L-01h=}
\aligned
\Phi_\lambda(y) & = \int_{B_{2l_j} \backslash B_\lambda} G(0, \lambda; y, z) Q_\lambda(z) dz + \int_{\Omega_j \backslash B_{2l_j}} G(0, \lambda; y, z) Q_\lambda(z) dz \\
& =: \Phi_{1,\lambda}(y) + \Phi_{2, \lambda}(y). 
\endaligned
\end{equation} 
By the estimates of $Q_\lambda(z)$ on $B_{2l_j} \backslash B_\lambda$,  one can see easily that the estimates of $\Phi_{1,\lambda}$ are very similar to  those of $\Phi_\lambda$ in Lemma \ref{LS4-312=01} of Step 2.  Hence we have the following:
\begin{lemma}\label{L-swjhdg-010} 
There  exists  a  constant $C>0$ independent of  $\lambda$ such that for all large $j$, 
$$ 
\Phi_{1,\lambda} (y) \leq 
\begin{cases}
- C M_j^{-\frac{2}{n-2\sigma}} |x_j|^{\alpha-1}  (|y| - \lambda) \lambda^{-n} \log \lambda  ~~~~~~ & \textmd{for} ~ y \in B_{4\lambda} \backslash \overline{B}_\lambda, \\
-C M_j^{-\frac{2}{n-2\sigma}} |x_j|^{\alpha-1}  |y|^{2\sigma -n} \lambda^{1-2\sigma} \log \lambda ~~~~~~ & \textmd{for} ~ y \in \Pi_j \backslash \overline{B}_{4\lambda}
\end{cases}
$$
and
$$ 
|\Phi_{1,\lambda} (y)| \leq 
\begin{cases}
C M_j^{-\frac{2}{n-2\sigma}} |x_j|^{\alpha-1}  (|y| - \lambda) \lambda^{2\sigma}  ~~~~~~ & \textmd{for} ~ y \in B_{4\lambda} \backslash \overline{B}_\lambda, \\
C M_j^{-\frac{2}{n-2\sigma}} |x_j|^{\alpha-1}  |y|^{2\sigma -n} \lambda^{n+1} ~~~~~~ & \textmd{for} ~ y \in  \Pi_j  \backslash \overline{B}_{4\lambda}. 
\end{cases}
$$
\end{lemma}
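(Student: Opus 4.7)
The plan is to mirror the proof of Lemma \ref{LS4-312=01} essentially verbatim, with the estimates on $Q_\lambda$ from \eqref{ST=-kfh01}--\eqref{ST=-kfh02} on $B_{2l_j}\setminus B_\lambda$ playing the role that \eqref{SAF4_315=004}--\eqref{SAF4_315=005} played in Step 2. The estimates on $Q_\lambda$ in the two steps differ only by the extra prefactor $|x_j|^{\alpha-1}$, and since we have further truncated the domain of integration from $\Omega_j\setminus B_\lambda$ down to $B_{2l_j}\setminus B_\lambda$ (a smaller region, so the upper bounds remain valid), the calculations go through line by line and produce precisely the claimed estimates with the additional $|x_j|^{\alpha-1}$ prefactor.

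For the negativity bound, I will first split
$$\Phi_{1,\lambda}(y) = \int_{D_\lambda} G(0,\lambda;y,z)Q_\lambda(z)\,dz + \int_{(B_{2l_j}\setminus B_\lambda)\setminus D_\lambda} G(0,\lambda;y,z)Q_\lambda(z)\,dz.$$
In the case $\lambda<|y|\le 4\lambda$, I combine the lower bound \eqref{SAF4_315=007} for $G$ on $D_\lambda$ with the negative pointwise estimate \eqref{ST=-kfh01}; this yields the negative leading term of size $-CM_j^{-2/(n-2\sigma)}|x_j|^{\alpha-1}(|y|-\lambda)\lambda^{-n}\log\lambda$ just as in \eqref{SAF4_316+1}. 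Then I decompose the complement of $D_\lambda$ into the three sets $\mathcal{A}_1,\mathcal{A}_2,\mathcal{A}_3$ (intersected with $B_{2l_j}$) and apply the kernel bound \eqref{SAF4_315=008} together with the second line of \eqref{ST=-kfh02}; the three positive error integrals are each controlled by $CM_j^{-2/(n-2\sigma)}|x_j|^{\alpha-1}(|y|-\lambda)\lambda^{-n}$, precisely as in \eqref{SAF4_316+2}--\eqref{SAF4_316+4}. Taking $R$ (and hence $\lambda$) sufficiently large so that the $\log\lambda$ factor dominates the constant produces the first claimed inequality. The case $|y|>4\lambda$ is handled identically using \eqref{SAF4_316*T1} and \eqref{SAF4_316*T2}, yielding $-CM_j^{-2/(n-2\sigma)}|x_j|^{\alpha-1}|y|^{2\sigma-n}\lambda^{1-2\sigma}\log\lambda$ from the $D_\lambda$ integral and analogous lower-order errors from splitting the complement into $\mathcal{S}_1,\dots,\mathcal{S}_4$.

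For the upper bound on $|\Phi_{1,\lambda}(y)|$, I drop negativity and simply apply the first line of \eqref{ST=-kfh02} together with the kernel bounds on $D_\lambda$, exactly as in the estimates of $\widetilde{I}_1$ and $\widetilde{\textit{II}}_1$ in the proof of Lemma \ref{LS4-312=01}; these give $CM_j^{-2/(n-2\sigma)}|x_j|^{\alpha-1}(|y|-\lambda)\lambda^{2\sigma}$ and $CM_j^{-2/(n-2\sigma)}|x_j|^{\alpha-1}|y|^{2\sigma-n}\lambda^{n+1}$, respectively.

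The only point requiring a sanity check is that shrinking the integration domain from $\Omega_j\setminus B_\lambda$ to $B_{2l_j}\setminus B_\lambda$ does not spoil the argument. This is immediate: all off-$D_\lambda$ integrands carry a factor $|z|^{-2\sigma-n}$, so the tails beyond $|z|=2l_j$ only ever contributed lower-order pieces (indeed, those contributions from $\Omega_j\setminus B_{2l_j}$ are precisely what gets separated out into $\Phi_{2,\lambda}$). Hence the bound is simply inherited from the proof of Lemma \ref{LS4-312=01}, and no new analytic obstacle arises; the main (purely bookkeeping) task is to carry the extra factor $|x_j|^{\alpha-1}$ through each of the estimates $I_1,I_2,I_3,I_4,\textit{II}_1,\textit{II}_2,\widetilde{I}_1,\widetilde{\textit{II}}_1$ consistently.
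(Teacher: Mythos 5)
Your proposal is correct and takes essentially the same approach as the paper, whose own proof is the one-line remark ``we only need to replace $M_j^{-2/(n-2\sigma)}$ and $\Omega_j\setminus B_\lambda$ in Lemma \ref{LS4-312=01} by $M_j^{-2/(n-2\sigma)}|x_j|^{\alpha-1}$ and $B_{2l_j}\setminus B_\lambda$.'' You correctly identify the two changes, correctly note that only the first two lines of \eqref{ST=-kfh02} are needed on $B_{2l_j}\setminus B_\lambda$, and correctly observe that shrinking the domain only discards positive error terms (and that $D_\lambda\subset B_{2l_j}$ for large $j$ so the dominant negative term is retained), so the bookkeeping goes through.
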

\begin{proof}
We only need to replace $M_j^{-\frac{2}{n-2\sigma}}$ and $\Omega_j \backslash B_\lambda$ in Lemma \ref{LS4-312=01}  by  $M_j^{-\frac{2}{n-2\sigma}} |x_j|^{\alpha-1}$ and $B_{2l_j} \backslash B_\lambda$, respectively. 
\end{proof}

The estimates of $\Phi_{2,\lambda}$ depend on $\alpha$. For the sake of application, let's define  
$$
g_\lambda(y)  = M_j^{-\frac{2\alpha}{n-2\sigma}}  \int_{\Omega_j \backslash B_{2l_j}} G(0, \lambda; y, z) |z|^{\alpha-2\sigma-n} dz ~~~~~~ \textmd{for} ~ \lambda < |y| \leq  3 L_j, 
$$
where $L_j :=M_j^{\frac{2}{n-2\sigma}}$.  Then  $\Omega_j \subset B_{3L_j}$ for large $j$ and $|\Phi_{2,\lambda}(y)| \leq C g_\lambda(y)$ for some constant $C>0$ depending  only on $K, n, \sigma$.   

\begin{lemma}\label{Lg_lammfb=01} 
For $\lambda < |y| \leq 4\lambda$, we have
\begin{equation}\label{317=yfmh-001}
g_\lambda(y) \leq
\begin{cases}
C M_j^{-\frac{2n}{n-2\sigma}} (|y| - \lambda)\lambda^{-1} ~~~~~~ & \textmd{if} ~ \alpha>n, \\
C M_j^{-\frac{2n}{n-2\sigma}} \log M_j (|y| - \lambda)\lambda^{-1} ~~~~~~ & \textmd{if} ~ \alpha =n,\\
C M_j^{-\frac{2\alpha}{n-2\sigma}} l_j^{\alpha-n} (|y| - \lambda)\lambda^{-1} ~~~~~~ & \textmd{if} ~ 1 < \alpha <n. 
\end{cases}
\end{equation}
For $4\lambda \leq |y| \leq l_j$, we have
\begin{equation}\label{317=yfmh-002}
g_\lambda(y) \leq
\begin{cases}
C M_j^{-\frac{2n}{n-2\sigma}}  ~~~~~~ & \textmd{if} ~ \alpha>n, \\
C M_j^{-\frac{2n}{n-2\sigma}} \log M_j  ~~~~~~ & \textmd{if} ~ \alpha =n,\\
C M_j^{-\frac{2\alpha}{n-2\sigma}} l_j^{\alpha-n}  ~~~~~~ & \textmd{if} ~ 1 < \alpha <n. 
\end{cases}
\end{equation}
For $l_j \leq |y| \leq 3L_j$,   we have
\begin{equation}\label{317=yfmh-003}
g_\lambda(y) \leq
\begin{cases}
C M_j^{-\frac{2n}{n-2\sigma}}  ~~~~~~ & \textmd{if} ~ \alpha>n, \\
C M_j^{-\frac{2n}{n-2\sigma}} \log M_j  ~~~~~~ & \textmd{if} ~ \alpha =n,\\
C M_j^{-\frac{2\alpha}{n-2\sigma}} |y|^{\alpha -n}   ~~~~~~ & \textmd{if} ~ 2 \sigma< \alpha <n, \\
C M_j^{-\frac{2\alpha}{n-2\sigma}} |y|^{2\sigma-n} \log |y|  ~~~~~~ & \textmd{if} ~ \alpha =2 \sigma, \\
C M_j^{-\frac{2\alpha}{n-2\sigma}} l_j^{\alpha-2\sigma} |y|^{2\sigma -n} ~~~~~~ & \textmd{if} ~ 1< \alpha < 2\sigma. 
\end{cases}
\end{equation}
\end{lemma}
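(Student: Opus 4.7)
\textbf{Proof plan for Lemma \ref{Lg_lammfb=01}.} The argument is a case analysis on the size of $|y|$ relative to $\lambda$, $l_j$, and $L_j$, combined with the pointwise kernel estimates of Lemma \ref{InK-01}. Throughout, recall that $\lambda \in [R-2,R+2]$ is bounded, whereas $l_j = M_j^{2/(n-2\sigma)}|x_j| \to \infty$ and $L_j = M_j^{2/(n-2\sigma)} \to \infty$, with $l_j/L_j = |x_j| \to 0$, so $\lambda \ll l_j \ll L_j$ for large $j$, and $\Omega_j \subset B_{3L_j}$. In each case the integral $g_\lambda(y)$ will be reduced (after kernel estimate) to a one-dimensional radial integral $\int r^{\alpha-n-1}\,dr$, whose behaviour depends on whether $\alpha$ is above, equal to, or below the threshold $n$, or (in case 3) the additional threshold $2\sigma$.

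First I would handle the range $\lambda<|y|\le 4\lambda$. Since $|z|\ge 2l_j \gg 10\lambda \ge |y|$, one has $|y-z|\ge |z|/2$, and because $|y|\le 10\lambda$ the second inequality of part~(2) of Lemma \ref{InK-01} applies, giving
$$
G(0,\lambda;y,z)\;\le\; C\,\frac{(|y|-\lambda)(|z|^2-\lambda^2)}{\lambda\,|y-z|^{n-2\sigma+2}}\;\le\; C\,\frac{|y|-\lambda}{\lambda}\,|z|^{2\sigma-n}.
$$
Substituting into $g_\lambda(y)$, the remaining integral is $\int_{2l_j}^{3L_j} r^{\alpha-n-1}\,dr$, which equals $O(L_j^{\alpha-n})$, $O(\log L_j)$, or $O(l_j^{\alpha-n})$ according to $\alpha>n$, $\alpha=n$, or $1<\alpha<n$. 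Using $L_j = M_j^{2/(n-2\sigma)}$ and collecting the prefactor $M_j^{-2\alpha/(n-2\sigma)}$ yields \eqref{317=yfmh-001}. For the range $4\lambda\le|y|\le l_j$, the point $y$ need not be close to $\partial B_\lambda$, so I drop the refined estimate and use only $G(0,\lambda;y,z)\le |y-z|^{2\sigma-n}$. Since still $|z|\ge 2|y|$, we have $|y-z|\ge|z|/2$ and the same radial integral arises, minus the $(|y|-\lambda)/\lambda$ factor, giving \eqref{317=yfmh-002}.

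For the outer range $l_j\le|y|\le 3L_j$, the bound $|z|\lesssim |y|$ is no longer available in part of the domain of integration, so I split $\Omega_j\setminus B_{2l_j}$ into the four regions
$$
\mathcal{S}_1=\{2l_j\le|z|\le |y|/2\},\quad \mathcal{S}_2=\{|z|\ge 2|y|\},\quad \mathcal{S}_3=\{|y-z|\le |y|/2\},\quad \mathcal{S}_4=\{|y-z|\ge |y|/2,\ |y|/2\le|z|\le 2|y|\},
$$
using $G\le|y-z|^{2\sigma-n}$ throughout. On $\mathcal{S}_1$ one has $|y-z|\ge|y|/2$, producing $|y|^{2\sigma-n}\int_{2l_j}^{|y|/2} r^{\alpha-n-1}dr$; on $\mathcal{S}_2$ one has $|y-z|\ge|z|/2$, producing $\int_{2|y|}^{3L_j} r^{\alpha-n-1}dr$; on $\mathcal{S}_3$ the factor $|z|^{\alpha-2\sigma-n}$ is comparable to $|y|^{\alpha-2\sigma-n}$ and the remaining $|y-z|^{2\sigma-n}$ integrates to $C|y|^{2\sigma}$, giving $C|y|^{\alpha-n}$; on $\mathcal{S}_4$ both factors are comparable to powers of $|y|$, again giving $C|y|^{\alpha-n}$. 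Comparing the resulting terms and multiplying by $M_j^{-2\alpha/(n-2\sigma)}$ produces exactly the five subcases listed in \eqref{317=yfmh-003}.

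The main obstacle, which is really a bookkeeping check rather than a conceptual difficulty, is verifying in case 3 that the dominant contribution among $\mathcal{S}_1,\dots,\mathcal{S}_4$ matches the stated bound across all thresholds: for $\alpha>n$ and $\alpha=n$ the $L_j$-level terms from $\mathcal{S}_2$ dominate; for $2\sigma<\alpha<n$ the $|y|^{\alpha-n}$ terms from $\mathcal{S}_2,\mathcal{S}_3,\mathcal{S}_4$ dominate; at the delicate endpoint $\alpha=2\sigma$ an extra $\log|y|$ appears from integrating $\int_{2l_j}^{|y|/2} r^{-1}dr$ in $\mathcal{S}_1$; and for $1<\alpha<2\sigma$ the contribution from $\mathcal{S}_1$ yields $|y|^{2\sigma-n} l_j^{\alpha-n}$, which one rewrites using $l_j^{\alpha-n}\le l_j^{\alpha-2\sigma}\cdot l_j^{2\sigma-n}\le l_j^{\alpha-2\sigma}\cdot|y|^{2\sigma-n}/|y|^{2\sigma-n}$ --- more cleanly, one bounds $l_j^{\alpha-n}\le l_j^{\alpha-2\sigma}|y|^{2\sigma-n}\cdot|y|^{n-2\sigma}l_j^{2\sigma-n}$ and uses $|y|\ge l_j$ to absorb the extra factor --- producing the $l_j^{\alpha-2\sigma}|y|^{2\sigma-n}$ term. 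All other subcases are similarly checked against the target by elementary exponent arithmetic using $l_j\le|y|\le 3L_j$ and $l_j/L_j\to 0$.
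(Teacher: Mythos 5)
Your decomposition and kernel estimates exactly match the paper's proof: the same three ranges of $|y|$, the same use of Lemma \ref{InK-01}(2) in the inner region, the same $|z|\ge 2|y|$ observation in the middle region, and the same four-way split $\mathcal{S}_1,\dots,\mathcal{S}_4$ (the paper's $\mathcal{E}_1,\dots,\mathcal{E}_4$) in the outer region. The only flaw is in your bookkeeping for the $1<\alpha<2\sigma$ endpoint of the outer case: the $\mathcal{S}_1$ contribution is
$$
|y|^{2\sigma-n}\int_{2l_j}^{|y|/2} r^{\alpha-2\sigma-1}\,dr \;\le\; C\,|y|^{2\sigma-n}\,l_j^{\alpha-2\sigma}
$$
(lower endpoint dominates because $\alpha-2\sigma<0$), which \emph{already} is the stated bound, not $|y|^{2\sigma-n}l_j^{\alpha-n}$ as you write. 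The chain of inequalities you then insert to "repair" this (involving $|y|^{2\sigma-n}/|y|^{2\sigma-n}$ and absorbing factors via $|y|\ge l_j$) is circular and unnecessary; nothing there follows cleanly. Since you nevertheless land on the correct final expression and the remaining subcases are handled correctly, this is a local slip in exponent arithmetic rather than a structural gap, but that sentence as written would not survive scrutiny and should be replaced by the direct computation above.
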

\begin{proof}
(1) $\lambda < |y|\leq 4\lambda$. By Lemma \ref{InK-01}  we have
$$
G(0, \lambda; y, z) \leq C \frac{(|y|-\lambda) (|z|^2 - \lambda^2)}{\lambda|y - z|^{n-2\sigma+2}} \leq C \frac{|y| - \lambda}{\lambda} |z|^{2\sigma-n}.  
$$
Hence
$$
\aligned
g_\lambda(y) & \leq C M_j^{-\frac{2\alpha}{n-2\sigma}} (|y| - \lambda)  \lambda^{-1}   \int_{\Omega_j \backslash B_{2l_j}}  |z|^{\alpha-2n} dz\\
& \leq  C M_j^{-\frac{2\alpha}{n-2\sigma}} (|y| - \lambda)  \lambda^{-1}   \int_{2l_j \leq |z| \leq 3L_j}  |z|^{\alpha-2n} dz \\
& \leq 
\begin{cases}
C M_j^{-\frac{2n}{n-2\sigma}} (|y| - \lambda)\lambda^{-1} ~~~~~~ & \textmd{if} ~ \alpha>n, \\
C M_j^{-\frac{2n}{n-2\sigma}} \log M_j (|y| - \lambda)\lambda^{-1} ~~~~~~ & \textmd{if} ~ \alpha =n,\\
C M_j^{-\frac{2\alpha}{n-2\sigma}} l_j^{\alpha-n} (|y| - \lambda)\lambda^{-1} ~~~~~~ & \textmd{if} ~ 1 < \alpha <n. 
\end{cases}
\endaligned
$$
(2) $4\lambda \leq  |y| \leq l_j$.  Since $|z| \geq 2l_j$ and thus $|y| \leq |z|/2$,  we  have  $G(0, \lambda; y, z) \leq C |y -z|^{2\sigma-n} \leq C |z|^{2\sigma-n}$.  Hence 
$$
\aligned
g_\lambda(y) & \leq C M_j^{-\frac{2\alpha}{n-2\sigma}}    \int_{\Omega_j \backslash B_{2l_j}}  |z|^{\alpha-2n} dz\\
& \leq 
\begin{cases}
C M_j^{-\frac{2n}{n-2\sigma}}  ~~~~~~ & \textmd{if} ~ \alpha>n, \\
C M_j^{-\frac{2n}{n-2\sigma}} \log M_j  ~~~~~~ & \textmd{if} ~ \alpha =n,\\
C M_j^{-\frac{2\alpha}{n-2\sigma}} l_j^{\alpha-n}  ~~~~~~ & \textmd{if} ~ 1 < \alpha <n. 
\end{cases}
\endaligned
$$
(3) $l_j \leq |y| \leq 3L_j$.  We separate the region $\Omega_j \backslash B_{2l_j}$ into four parts: 
$$
\mathcal{E}_1 = \{ z \in \Omega_j \backslash B_{2l_j}: |z| < |y|/2 \}, 
$$
$$
\mathcal{E}_2 = \{ z \in \Omega_j \backslash B_{2l_j}: |z| > 2|y| \}, 
$$
$$
\mathcal{E}_3 = \{ z \in \Omega_j \backslash B_{2l_j}: |z-y| \leq  |y|/2 \}, 
$$
$$
\mathcal{E}_4 = \{ z \in \Omega_j \backslash B_{2l_j}: |z-y| \geq  |y|/2  ~ \textmd{and} ~ |y|/2 \leq |z| \leq  2|y|\}. 
$$
Note that some of these sets may be empty. Then
$$
\aligned
g_\lambda(y) & \leq C M_j^{-\frac{2\alpha}{n-2\sigma}}    \int_{\Omega_j \backslash B_{2l_j}} |y -z|^{2\sigma -n} |z|^{\alpha-2\sigma-n} dz \\
& \leq  C M_j^{-\frac{2\alpha}{n-2\sigma}}   \bigg(  |y|^{2\sigma -n} \int_{\mathcal{E}_1}  |z|^{\alpha-2\sigma-n} dz  + \int_{\mathcal{E}_2}  |z|^{2\sigma-n} |z|^{\alpha-2\sigma-n} dz  \\
& ~~~~~~  + |y|^{\alpha-2\sigma-n} \int_{\mathcal{E}_3}  |y - z|^{2\sigma-n} dz + |y|^{2\sigma-n} \int_{\mathcal{E}_4}  |z|^{\alpha-2\sigma-n} dz \bigg) \\ 
& \leq C M_j^{-\frac{2\alpha}{n-2\sigma}}  \bigg( |y|^{2\sigma -n} \int_{2l_j}^{|y|/2} r^{\alpha -2\sigma-1} dr + \int_{2|y|}^{3L_j} r^{\alpha -n-1} dr  \\
& ~~~~~~  + |y|^{\alpha - n} + |y|^{2\sigma-n} \int_{|y|/2}^{2|y|} r^{\alpha-2\sigma-1} dr \bigg) \\
& \leq 
\begin{cases}
C M_j^{-\frac{2n}{n-2\sigma}}  ~~~~~~ & \textmd{if} ~ \alpha>n, \\
C M_j^{-\frac{2n}{n-2\sigma}} \log M_j  ~~~~~~ & \textmd{if} ~ \alpha =n,\\
C M_j^{-\frac{2\alpha}{n-2\sigma}} |y|^{\alpha -n}   ~~~~~~ & \textmd{if} ~ 2 \sigma< \alpha <n, \\
C M_j^{-\frac{2\alpha}{n-2\sigma}} |y|^{2\sigma-n} \log |y|  ~~~~~~ & \textmd{if} ~ \alpha =2 \sigma, \\
C M_j^{-\frac{2\alpha}{n-2\sigma}} l_j^{\alpha-2\sigma} |y|^{2\sigma -n} ~~~~~~ & \textmd{if} ~ 1< \alpha < 2\sigma. 
\end{cases}
\endaligned
$$
The proof of Lemma \ref{Lg_lammfb=01} is finished. 
\end{proof} 

Next we consider two cases for $\alpha>1$ separately.

\vskip0.10in 

{\it Case 1:   $\alpha<2\sigma$ or $\alpha \geq (n-2\sigma)/2$}. We construct a function $H_\lambda$ which is non-positive on $\Pi_j \backslash \overline{B}_\lambda$.  Let  
\begin{equation}\label{318=yhzmfn-01} 
H_\lambda(y)  = - \varepsilon M_j^{-1} (\lambda^{2\sigma-n} - |y|^{2\sigma -n} ) + \Phi_\lambda(y), ~~~~~ y \in \Pi_j \backslash \overline{B}_\lambda
\end{equation} 
for some small $\varepsilon \in (0, \varepsilon_0/4)$ where $\varepsilon_0$ is defined in Lemma \ref{LAn-01}.  It is clear that  Lemma \ref{hhff=01} still holds, and by using Lemma  \ref{hhff=01} we can choose a small $\varepsilon>0$ such that  for any $ \lambda \in [\lambda_0, \lambda_1]$, 
\begin{equation}\label{318-wut4g=001}
J_\lambda(y) - \varepsilon M_j^{-1} (\lambda^{2\sigma-n} - |y|^{2\sigma -n} ) \geq \frac{1}{2} J_\lambda(y),  ~~~~~ y \in \Pi_j \backslash \overline{B}_\lambda. 
\end{equation}
Here we recall that $\lambda_0=R-2$ and $\lambda_1=R+2$.   Then by Lemmas \ref{L-swjhdg-010} and \ref{Lg_lammfb=01} we have for $\lambda\in [\lambda_0, \lambda_1]$ and  for large $j$, 
\begin{equation}\label{318+sqfto84n-01} 
H_\lambda(y)  < 0, ~~~~~ y \in \Pi_j \backslash \overline{B}_\lambda.
\end{equation} 
Moreover, it follows from Lemmas \ref{LAn-01}, \ref{L-swjhdg-010} and \ref{Lg_lammfb=01} that for large $j$, 
$$
\varphi_{\lambda_0}(y) +  H_{\lambda_0}(y) \geq 0,  ~~~~~~~  y \in \Pi_j \backslash \overline{B}_{\lambda_0}. 
$$
By \eqref{Ste3-BFh-01} and \eqref{318-wut4g=001},  $\varphi_{\lambda} +  H_{\lambda}$ satisfies 
\begin{equation}\label{Ste3-aj65gbk69-01}
\varphi_\lambda(y) +  H_\lambda(y)  \geq \int_{\Pi_j \backslash B_\lambda} G(0, \lambda; y, z) b_\lambda(z) \varphi_\lambda(z) dz  + \frac{1}{2}J_\lambda(y) ~~~~~ \textmd{for} ~ y \in \Pi_j \backslash \overline{B}_\lambda, 
\end{equation}
where $\lambda \in [\lambda_0, \lambda_1]$.  We define
$$
\bar{\lambda}:= \sup\{ \mu \geq \lambda_0 ~ | ~ \varphi_{\lambda}(y) +  H_{\lambda}(y) \geq 0, ~ \forall ~ y \in \Pi_j \backslash \overline{B}_{\lambda}, ~ \forall ~ \lambda_0 \leq  \lambda \leq  \mu \}. 
$$
Then,  $\bar{\lambda}$ is well defined and $\bar{\lambda}  \geq \lambda_0$ for all large $j$. Furthermore, by \eqref{318+sqfto84n-01} and Lemma \ref{LAn-01} we have $\bar{\lambda} <  \lambda_1$ for all large $j$. Then, as in Step 2,  applying the moving sphere method one can derive a contradiction to the definition of $\bar{\lambda}$.  The proof of Theorem \ref{IEthm01}  in Case 1 is finished.     

\vskip0.10in 

{\it Case 2:   $2\sigma  \leq \alpha <  (n-2\sigma)/2$}. By \eqref{ST=-kfh02} we can choose a constant $\hat{C}>0$  such that 
\begin{equation}\label{319-hYm==01}
 \hat{C} M_j^{-\frac{2\alpha}{n-2\sigma}} |z|^{\alpha-2\sigma-n} - Q_\lambda(z)  \geq M_j^{-\frac{2\alpha}{n-2\sigma}} |z|^{\alpha-2\sigma-n} ~~~~~~ \textmd{for} ~ z \in \Omega_j \backslash B_{2l_j}. 
\end{equation} 
Let
\begin{equation}\label{Tiyyt=20210624}
T_\lambda(y)  = \hat{C} M_j^{-\frac{2\alpha}{n-2\sigma}}  \int_{\Omega_j \backslash B_{2l_j}} G(0, \lambda; y, z) |z|^{\alpha-2\sigma-n} dz ~~~~~~ \textmd{for} ~ \lambda < |y| \leq 3L_j.
\end{equation} 
As a consequence of Lemma \ref{Lg_lammfb=01}, we have the following estimates for $T_\lambda$.
\begin{lemma}\label{319=ehciu23t}
Suppose $2\sigma  \leq \alpha <  (n-2\sigma)/2$.  For $\lambda < |y| \leq 4\lambda$, we have 
\begin{equation}\label{319=anjfpk2083+01}
T_\lambda(y) \leq C M_j^{-\frac{2}{n-2\sigma}} |x_j|^{\alpha-1} (|y| - \lambda) \lambda^{-n}. 
\end{equation} 
For $4\lambda \leq |y| \leq 2l_j$, we have 
\begin{equation}\label{319=anjfpk2083+02}
T_\lambda(y) \leq C M_j^{-\frac{2}{n-2\sigma}} |x_j|^{\alpha-1} |y|^{2\sigma -n} \lambda^{1-2\sigma}. 
\end{equation} 
For $2l_j \leq |y| \leq 3L_j$,   we have 
\begin{equation}\label{319=anjfpk2083+03}
T_\lambda(y) \leq 
\begin{cases}
C M_j^{-\frac{2\alpha}{n-2\sigma}} |y|^{\alpha -n}   ~~~~~~ & \textmd{if}  ~ 2 \sigma< \alpha < (n-2\sigma)/2, \\
C M_j^{-\frac{4\sigma}{n-2\sigma}} |y|^{2\sigma-n} \log |y|  ~~~~~~ & \textmd{if}  ~ \alpha =2 \sigma. 
\end{cases} 
\end{equation} 
\end{lemma}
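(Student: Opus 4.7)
The plan is to reduce Lemma \ref{319=ehciu23t} to Lemma \ref{Lg_lammfb=01}, since $T_\lambda = \hat{C}\,g_\lambda$ with $g_\lambda$ as defined just before Lemma \ref{Lg_lammfb=01}. The claimed bounds will follow from those already obtained for $g_\lambda$ after rewriting them via the identity $l_j = M_j^{2/(n-2\sigma)}|x_j|$ (so that $l_j^{\alpha-n} M_j^{-2\alpha/(n-2\sigma)} = M_j^{-2n/(n-2\sigma)}|x_j|^{\alpha-n}$), and exploiting $l_j\to\infty$ to absorb any remaining powers.

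For $\lambda < |y| \leq 4\lambda$, since $1 < \alpha < (n-2\sigma)/2 < n$, the third sub-case of the first estimate in Lemma \ref{Lg_lammfb=01} gives
\[
T_\lambda(y) \leq C M_j^{-\frac{2n}{n-2\sigma}} |x_j|^{\alpha-n}(|y|-\lambda)\lambda^{-1}.
\]
To deduce \eqref{319=anjfpk2083+01} I would show that the right-hand side is dominated by $C M_j^{-2/(n-2\sigma)}|x_j|^{\alpha-1}(|y|-\lambda)\lambda^{-n}$; since $\lambda$ stays in the bounded interval $[R-2, R+2]$, this amounts to checking $l_j^{-(n-1)} \leq C$, obvious for large $j$.

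For the middle range $4\lambda \leq |y| \leq 2l_j$, I would split at $|y| = l_j$: on $[4\lambda, l_j]$ Lemma \ref{Lg_lammfb=01} yields $g_\lambda(y) \leq CM_j^{-2n/(n-2\sigma)}|x_j|^{\alpha-n}$, and on $[l_j, 2l_j]$ the same bound follows from the third case of Lemma \ref{Lg_lammfb=01} after using monotonicity $|y|^{\alpha-n}\leq l_j^{\alpha-n}$ (when $2\sigma < \alpha < n$), or after absorbing the extra factor $\log|y| \leq \log(2l_j)$ when $\alpha = 2\sigma$. To reach \eqref{319=anjfpk2083+02} it then suffices to verify
\[
M_j^{-\frac{2n}{n-2\sigma}}|x_j|^{\alpha-n} \leq C M_j^{-\frac{2}{n-2\sigma}}|x_j|^{\alpha-1}|y|^{2\sigma-n}\lambda^{1-2\sigma},
\]
which, at the worst point $|y| = 2l_j$, simplifies (using $(2\sigma-n)/(n-2\sigma) = -1$) to $l_j^{1-2\sigma} \leq C$, respectively $l_j^{1-2\sigma}\log l_j \leq C$ in the borderline case $\alpha = 2\sigma$; both hold for large $j$ because $\sigma \geq 1$.

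The outer range $2l_j \leq |y| \leq 3L_j$ requires essentially no work, as the two sub-cases of \eqref{319=anjfpk2083+03} are literally the matching sub-cases of the third bullet of Lemma \ref{Lg_lammfb=01} under the present hypothesis $2\sigma \leq \alpha < (n-2\sigma)/2$. The main nuisance in the whole argument is not a conceptual one but rather the bookkeeping around the transition point $|y| = l_j$ in the middle range and the careful handling of the logarithmic factor in the borderline case $\alpha = 2\sigma$; once these are organized, everything reduces to a clean application of Lemma \ref{Lg_lammfb=01} combined with $l_j\to\infty$.
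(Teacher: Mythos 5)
Your proposal is correct and follows essentially the same route as the paper: everything reduces to the bounds on $g_\lambda$ in Lemma \ref{Lg_lammfb=01}, combined with the identity $l_j = M_j^{2/(n-2\sigma)}|x_j|$, the limit $l_j \to \infty$, and $\sigma \geq 1$ to absorb the residual factors $l_j^{1-n}$ and $l_j^{1-2\sigma}\log l_j$. The only cosmetic difference is in the middle range $4\lambda \leq |y| \leq 2l_j$, where you split at $|y|=l_j$ and work with the $|y|$-independent bound $M_j^{-2n/(n-2\sigma)}|x_j|^{\alpha-n}$ (keeping the extra $\log l_j$ separately when $\alpha = 2\sigma$), whereas the paper keeps the $|y|$-dependent bound $M_j^{-2\alpha/(n-2\sigma)}|y|^{\alpha-n}\log|y|$, factors $|y|^{\alpha-n} = |y|^{\alpha-2\sigma}|y|^{2\sigma-n}$, and then bounds $|y|^{\alpha-2\sigma}\log|y| \leq (2l_j)^{\alpha-2\sigma}\log(2l_j)$; both reach $l_j^{1-2\sigma}\log l_j \leq C\lambda^{1-2\sigma}$ and the same conclusion.
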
 
\begin{proof}
When $\lambda < |y| \leq 4\lambda$, by \eqref{317=yfmh-001} we obtain  
$$
T_\lambda(y) \leq C M_j^{-\frac{2\alpha}{n-2\sigma}} l_j^{\alpha-1} \cdot l_j^{1-n} (|y| - \lambda)\lambda^{-1} \leq C M_j^{-\frac{2}{n-2\sigma}}  |x_j|^{\alpha-1}  (|y| - \lambda)\lambda^{-n}, 
$$
where we used $l_j=M_j^{\frac{2}{n-2\sigma}} |x_j|$ and $l_j \to \infty$. 

When $4\lambda\leq |y| \leq 3L_j$, by \eqref{317=yfmh-002}  and   \eqref{317=yfmh-003} we have 
$$
T_\lambda(y) \leq 
\begin{cases}
C M_j^{-\frac{2\alpha}{n-2\sigma}} |y|^{\alpha -n}   ~~~~~~ & \textmd{if}  ~ 2 \sigma< \alpha < (n-2\sigma)/2, \\
C M_j^{-\frac{4\sigma}{n-2\sigma}} |y|^{2\sigma-n} \log |y|  ~~~~~~ & \textmd{if}  ~ \alpha =2 \sigma. 
\end{cases} 
$$
In particular, for $4\lambda \leq |y| \leq 2l_j$ we have 
$$
\aligned
T_\lambda(y) & \leq  C M_j^{-\frac{2\alpha}{n-2\sigma}} |y|^{\alpha -n} \log|y| \\
& \leq C M_j^{-\frac{2\alpha}{n-2\sigma}} |y|^{\alpha -2\sigma}  |y|^{2\sigma-n} \log|y| \\
& \leq C M_j^{-\frac{2}{n-2\sigma}} |x_j|^{\alpha -1}  |y|^{2\sigma-n}  l_j^{1-2\sigma} \log l_j\\
& \leq C M_j^{-\frac{2}{n-2\sigma}} |x_j|^{\alpha-1} |y|^{2\sigma -n} \lambda^{1-2\sigma}, 
\endaligned
$$ 
where we used $\sigma > \frac{1}{2}$ and $l_j \to \infty$.  Lemma \ref{319=ehciu23t} is proved.   
\end{proof}

Define 
$$
H_\lambda(y) := \Phi_{1, \lambda}(y) + T_\lambda(y) ~~~~~~ \textmd{for} ~ y \in \Pi_j \backslash \overline{B}_{\lambda}. 
$$
Then from Lemmas  \ref{L-swjhdg-010}  and \ref{319=ehciu23t} we obtain 
\begin{equation}\label{319-43hym=01}
H_\lambda(y) \leq
\begin{cases}
- C M_j^{-\frac{2}{n-2\sigma}} |x_j|^{\alpha-1}  (|y| - \lambda) \lambda^{-n} \log \lambda   & \textmd{for} ~ y \in B_{4\lambda} \backslash \overline{B}_\lambda, \\
-C M_j^{-\frac{2}{n-2\sigma}} |x_j|^{\alpha-1}  |y|^{2\sigma -n} \lambda^{1-2\sigma} \log \lambda   & \textmd{for} ~ y \in B_{2l_j} \backslash B_{4\lambda},\\
C M_j^{-\frac{2\alpha}{n-2\sigma}} |y|^{\alpha -n}    & \textmd{for} ~ y \in \Pi_j \backslash B_{2l_j},   ~ 2 \sigma< \alpha < (n-2\sigma)/2, \\
C M_j^{-\frac{4\sigma}{n-2\sigma}} |y|^{2\sigma-n} \log |y|   & \textmd{for} ~ y \in \Pi_j \backslash B_{2l_j},~  \alpha =2 \sigma 
\end{cases}
\end{equation} 
and 
\begin{equation}\label{319-43hym=03}
|H_{\lambda} (y)| \leq 
\begin{cases}
C M_j^{-\frac{2}{n-2\sigma}} |x_j|^{\alpha-1}  (|y| - \lambda) \lambda^{2\sigma}  ~~~~~~ & \textmd{for} ~ y \in B_{4\lambda} \backslash \overline{B}_\lambda, \\
C M_j^{-\frac{2}{n-2\sigma}} |x_j|^{\alpha-1}  |y|^{2\sigma -n} \lambda^{n+1} ~~~~~~ & \textmd{for} ~ y \in  B_{2l_j}  \backslash  B_{4\lambda}, \\
\theta_j |y|^{2\sigma-n} & \textmd{for} ~ y \in  \Pi_j  \backslash B_{2l_j}, 
\end{cases}
\end{equation}
where $\{ \theta_j \}$ is a positive sequence satisfying $\theta_j \to 0$ as $j \to \infty$.  Remark that in order to obtain  \eqref{319-43hym=01},  we could take $R$ larger and then fix it.   Furthermore, by \eqref{Ste3-BFh-01} and \eqref{319-hYm==01} we see that $\varphi_\lambda$ satisfies 
\begin{equation}\label{318=09sdbfjke}
\aligned
\varphi_\lambda(y) +  H_\lambda(y) & \geq \int_{\Pi_j \backslash B_\lambda} G(0, \lambda; y, z) b_\lambda(z) \varphi_\lambda(z) dz  \\
& ~~~~ + M_j^{-\frac{2\alpha}{n-2\sigma}}\int_{\Omega_j \backslash B_{2l_j}} G(0, \lambda; y, z)  |z|^{\alpha-2\sigma-n} dz + J_\lambda(y),  ~~~~~ y \in \Pi_j \backslash \overline{B}_\lambda, 
 \endaligned
\end{equation}
where $\lambda \in [\lambda_0, \lambda_1]$ with $\lambda_0=R-2$ and $\lambda_1=R+2$.  Moreover, it follows from Lemma \ref{LAn-01} and \eqref{319-43hym=03} that for large $j$,  
$$
\varphi_{\lambda_0}(y) +  H_{\lambda_0}(y) \geq 0,  ~~~~~~~  y \in \Pi_j \backslash \overline{B}_{\lambda_0}. 
$$
We define
$$
\bar{\lambda}:= \sup\{ \mu \geq \lambda_0 ~ | ~ \varphi_{\lambda}(y) +  H_{\lambda}(y) \geq 0, ~ \forall ~ y \in \Pi_j \backslash \overline{B}_{\lambda}, ~ \forall ~ \lambda_0 \leq  \lambda \leq  \mu \}. 
$$
Then,  $\bar{\lambda}$ is well defined and $\bar{\lambda}  \geq \lambda_0$ for all large $j$. Furthermore, by \eqref{319-43hym=01} and Lemma \ref{LAn-01} we have $\bar{\lambda} <  \lambda_1$ for all large $j$. 
Using the continuity we obtain 
$$
\varphi_{\bar{\lambda}}(y) +  H_{\bar{\lambda}}(y) \geq 0,  ~~~~~~~  y \in \Pi_j \backslash \overline{B}_{\bar{\lambda}}. 
$$
This together with Lemma \ref{L-swjhdg-010} and \eqref{319-43hym=01} yields 
\begin{equation}\label{319BYhf=01}
\varphi_{\bar{\lambda}}(y) \geq -H_{\bar{\lambda}}(y) \geq 
\begin{cases} 
0, ~~~~~~~  & y \in B_{2l_j} \backslash (\overline{B}_{\bar{\lambda}} \cup \Sigma_j), \\
 -T_{\bar{\lambda}}(y),  ~~~~~~~  &y \in \Pi_j \backslash B_{2l_j}, 
\end{cases} 
\end{equation}
where $\Sigma_j$ is the singular set of $w_j$. 

Let  
$$
\mathcal{O}_\lambda= \{y \in \Pi_j \backslash B_{2l_j}:  w_j(y) <   w_j^\lambda(y) \}. 
$$
By \eqref{318=09sdbfjke} and \eqref{319BYhf=01}, 
$$
\aligned
\varphi_{\bar{\lambda}}(y) +  H_{\bar{\lambda}}(y) & \geq \int_{\Pi_j \backslash B_{2l_j}} G(0, \bar{\lambda}; y, z) b_{\bar{\lambda}}(z) \varphi_{\bar{\lambda}}(z) dz  \\
& ~~~~ + M_j^{-\frac{2\alpha}{n-2\sigma}}\int_{\Omega_j \backslash B_{2l_j}} G(0, \bar{\lambda}; y, z)  |z|^{\alpha-2\sigma-n} dz + J_{\bar{\lambda}}(y)  \\
& \geq - \int_{ \mathcal{O}_{\bar{\lambda}} } G(0, \bar{\lambda}; y, z) b_{\bar{\lambda}}(z) T_{\bar{\lambda}} (z) dz  + \int_{(\Pi_j\backslash B_{2l_j}) \backslash \mathcal{O}_{\bar{\lambda}}} G(0, \bar{\lambda}; y, z) b_{\bar{\lambda}}(z) \varphi_{\bar{\lambda}}(z) dz \\  
& ~~~~ + M_j^{-\frac{2\alpha}{n-2\sigma}}\int_{\Omega_j \backslash B_{2l_j}} G(0, \bar{\lambda}; y, z)  |z|^{\alpha-2\sigma-n} dz + J_{\bar{\lambda}}(y),  ~~~~~ y \in \Pi_j \backslash \overline{B}_{\bar{\lambda}}. \\
 \endaligned
$$
Note that on $\mathcal{O}_{\bar{\lambda}}$, by Lemma \ref{Le-hm-074} we have 
$$
0 \leq b_{\bar{\lambda}}(z) =K_j(z) \frac{ w_j(z)^{\frac{n+2\sigma}{n-2\sigma}} -  w_j^{\bar{\lambda}}(z)^{\frac{n+2\sigma}{n-2\sigma}} }{w_j(z) - w_j^{\bar{\lambda}}(z)} \leq C w_j^{\bar{\lambda}} (z)^{\frac{4\sigma}{n-2\sigma}} \leq C |z|^{-4 \sigma}. 
$$
Thus,  Lemma \ref{319=ehciu23t} gives 
$$
\aligned
& - \int_{ \mathcal{O}_{\bar{\lambda}} } G(0, \bar{\lambda}; y, z) b_{\bar{\lambda}}(z) T_{\bar{\lambda}} (z) dz + M_j^{-\frac{2\alpha}{n-2\sigma}}\int_{\Omega_j \backslash B_{2l_j}} G(0, \bar{\lambda}; y, z)  |z|^{\alpha-2\sigma-n} dz \\
& ~~~~~ \geq M_j^{-\frac{2\alpha}{n-2\sigma}}  \int_{ \mathcal{O}_{\bar{\lambda}} } G(0, \bar{\lambda}; y, z) \left( |z|^{\alpha-2\sigma-n}  - C|z|^{\alpha -4 \sigma -n} \log |z| \right) dz \geq 0 
\endaligned 
$$
when $j$ is large.  Therefore 
$$
\varphi_{\bar{\lambda}}(y) +  H_{\bar{\lambda}}(y) \geq J_{\bar{\lambda}}(y),  ~~~~~ y \in \Pi_j \backslash \overline{B}_{\bar{\lambda}}. 
$$
Lemma \ref{hhff=01} is clearly still true.  It follows from  Lemma \ref{hhff=01}  that there exists a small $\varepsilon_1 \in (0, \lambda_1-\bar{\lambda})$  (which depends on $j$)  such that 
$$
\varphi_{\bar{\lambda}}(y) +  H_{\bar{\lambda}}(y) \geq J_{\bar{\lambda}}(y) \geq C u(x_j)^{-1} \geq \frac{\varepsilon_1}{|y|^{n-2\sigma}} ~~~~~~ \forall ~ |y| \geq  \lambda_1 + 1,  y \in \Pi_j. 
$$
By the above estimate and the explicit formulas for  $w_j^{\lambda}(y)$ and $H_\lambda(y)$, there exists $\varepsilon_2 \in (0, \varepsilon_1)$ such that for any $\bar{\lambda} \leq \lambda \leq \bar{\lambda} +\varepsilon_2$, 
\begin{equation}\label{16wj-=0934}
\aligned
\varphi_\lambda(y) + H_\lambda(y)  & = [\varphi_{\bar{\lambda}}(y)  +  H_{\bar{\lambda}}(y)] + [H_{\lambda}(y) - H_{\bar{\lambda}}(y)] + [w_j^{\bar{\lambda}}(y) - w_j^\lambda(y) ] \\
& \geq  \frac{\varepsilon_1}{2|y|^{n-2\sigma}}~~~~~~ \forall ~ |y| \geq  \lambda_1 +1,  ~ y\in \Pi_j. 
\endaligned
\end{equation}
This,  together with Lemma \ref{L-swjhdg-010} and \eqref{319-43hym=01},  implies that for any $\bar{\lambda} \leq \lambda \leq \bar{\lambda} +\varepsilon_2$, 
\begin{equation}\label{37dvYBF-01}
w_j(y) - w_j^\lambda(y) \geq -H_\lambda(y) +  \frac{\varepsilon_1}{2|y|^{n-2\sigma}} \geq 
\begin{cases}
\frac{\varepsilon_1}{2|y|^{n-2\sigma}}, ~~~ & y \in B_{2l_j} \backslash \Sigma_j, ~  |y| \geq  \lambda_1 +1,  \\
 -T_\lambda(y), ~~~ & y \in  \Pi_j \backslash B_{2l_j}. 
 \end{cases} 
\end{equation}
For $\varepsilon \in \left( 0, \varepsilon_2 \right)$ which we choose below, by \eqref{318=09sdbfjke}, \eqref{37dvYBF-01} and \eqref{y-05},  we have, for $\bar{\lambda} \leq \lambda \leq \bar{\lambda} +\varepsilon $ and for $\lambda \leq |y| \leq \lambda_1 +1$,  
$$
\aligned
\varphi_\lambda(y) +  H_\lambda(y)   & \geq \int_{\lambda \leq |z| \leq \lambda_1 +1}  G(0,\lambda; y, z) K_j(z) \left( w_j(z)^{\frac{n+2\sigma}{n-2\sigma}} - w_j^\lambda(z)^{\frac{n+2\sigma}{n-2\sigma}} \right) dz\\
& ~~~ +     \int_{\lambda_1 +2  \leq |z| \leq \lambda_1 +3}  G(0,\lambda; y, z) K_j(z) \left( w_j(z)^{\frac{n+2\sigma}{n-2\sigma}} - w_j^\lambda(z)^{\frac{n+2\sigma}{n-2\sigma}} \right) dz \\ 
& ~~~ - \int_{\mathcal{O}_\lambda} G(0, \lambda; y, z) b_\lambda(z) T_\lambda(z) dz  + \int_{(\Pi_j\backslash B_{2l_j}) \backslash \mathcal{O}_{\lambda}} G(0, \lambda; y, z) b_{\lambda}(z) \varphi_{\lambda}(z) dz \\
& ~~~ + M_j^{-\frac{2\alpha}{n-2\sigma}}\int_{\Omega_j \backslash B_{2l_j}} G(0, \lambda; y, z)  |z|^{\alpha-2\sigma-n} dz. 
\endaligned
$$
On  the set $\mathcal{O}_{\lambda}$,  Lemma \ref{Le-hm-074} leads to 
$$
0 \leq b_{\lambda}(z)  \leq C w_j^{\lambda} (z)^{\frac{4\sigma}{n-2\sigma}} \leq C |z|^{-4 \sigma}, 
$$
thus,  using Lemma \ref{319=ehciu23t} yields that for $j$  large, 
$$
\aligned
& - \int_{ \mathcal{O}_{\lambda} } G(0, \lambda; y, z) b_{\lambda}(z) T_{\lambda} (z) dz + M_j^{-\frac{2\alpha}{n-2\sigma}}\int_{\Omega_j \backslash B_{2l_j}} G(0, \lambda; y, z)  |z|^{\alpha-2\sigma-n} dz \\
& ~~~~~ \geq M_j^{-\frac{2\alpha}{n-2\sigma}}  \int_{ \mathcal{O}_{\lambda} } G(0, \lambda; y, z) \left( |z|^{\alpha-2\sigma-n}  - C|z|^{\alpha -4 \sigma -n} \log |z| \right) dz \geq 0. 
\endaligned 
$$
Hence,  for $\bar{\lambda} \leq \lambda \leq \bar{\lambda} +\varepsilon $ and for $\lambda \leq |y| \leq \lambda_1 +1$, we have  
$$
\aligned
\varphi_\lambda(y) +  H_\lambda(y)   &  \geq \int_{\lambda \leq |z| \leq \lambda_1 +1}  G(0,\lambda; y, z) K_j(z) \left( w_j(z)^{\frac{n+2\sigma}{n-2\sigma}} - w_j^\lambda(z)^{\frac{n+2\sigma}{n-2\sigma}} \right) dz\\
& ~~~ +     \int_{\lambda_1 +2  \leq |z| \leq \lambda_1 +3}  G(0,\lambda; y, z) K_j(z) \left( w_j(z)^{\frac{n+2\sigma}{n-2\sigma}} - w_j^\lambda(z)^{\frac{n+2\sigma}{n-2\sigma}} \right) dz \\ 
& \geq -C \int_{\lambda \leq |z| \leq \lambda +\varepsilon}   G(0,\lambda; y, z) (|z| -\lambda) dz\\
& ~~~ + \int_{\lambda+\varepsilon \leq |z| \leq \lambda_1 +1}  G(0,\lambda; y, z) K_j(z) \left( w_j^{\bar{\lambda}}(z)^{\frac{n+2\sigma}{n-2\sigma}} - w_j^\lambda(z)^{\frac{n+2\sigma}{n-2\sigma}} \right) dz \\
& ~~~ + A_1\int_{\lambda_1 +2  \leq |z| \leq \lambda_1 +3 }  G(0,\lambda; y, z) \left( w_j(z)^{\frac{n+2\sigma}{n-2\sigma}} - w_j^\lambda(z)^{\frac{n+2\sigma}{n-2\sigma}} \right) dz,
\endaligned
$$
where $A_1=\min_{B_2} K >0$, and we have  used \eqref{319BYhf=01} and 
$$
| w_j(z)^{\frac{n+2\sigma}{n-2\sigma}} - (w_j)_\lambda(z)^{\frac{n+2\sigma}{n-2\sigma}}  |\leq C(|z| - \lambda)
$$
in the second inequality. By \eqref{37dvYBF-01}  there exists $\hat{\delta} >0$ (which depends on $j$) such that
$$
w_j(z)^{\frac{n+2\sigma}{n-2\sigma}} - w_j^\lambda(z)^{\frac{n+2\sigma}{n-2\sigma}} \geq \hat{\delta} ~~~~~~ \textmd{for}  ~ \lambda_1 +2  \leq |z| \leq \lambda_1 +3.  
$$
Since $\|w_j\|_{C^1(B_{\lambda_1 + 2})}\leq C$ and $\|K_j\|_{L^\infty(B_{\lambda_1 + 2})} \leq C $ for some constant $C$ independent of $j$,  there exists some constant $C>0$ independent of both $\varepsilon$ and $j$  such that for any $\bar{\lambda} \leq \lambda \leq \bar{\lambda} +\varepsilon$, 
$$
K_j(z)| w_j^{\bar{\lambda}}(z)^{\frac{n+2\sigma}{n-2\sigma}} - w_j^\lambda(z)^{\frac{n+2\sigma}{n-2\sigma}}  |\leq C(\lambda - \bar{\lambda})\leq C \varepsilon~~~~~ \forall ~ \lambda \leq |z| \leq  \lambda_1 +1.
$$
For any  $\lambda \leq |y| \leq \lambda_1+1$, we can estimate the kernel $G$ as in Step 2 to obtain   
$$
\int_{\lambda+\varepsilon  \leq |z| \leq \lambda_1 +1}  G(0,\lambda; y, z) dz    \leq C (\varepsilon^{2\sigma-1}  +  |\ln \varepsilon|  +1) (|y| - \lambda) 
$$
and 
$$
\int_{ \lambda \leq |z| \leq \lambda +\varepsilon }   G(0,\lambda; y, z) (|z| -\lambda) dz  \leq  C (|y| - \lambda) \varepsilon^{2\sigma/n}. 
$$
Thus, using  \eqref{K011} we have  for  $\lambda < |y| \leq \lambda_1 + 1$ that 
$$
\aligned
 \varphi_\lambda(y) + H_\lambda(y) & \geq -C \varepsilon^{2\sigma/n} (|y| - \lambda) + \delta_1\hat{\delta} (|y| - \lambda) \int_{\lambda_1 +2  \leq |z| \leq \lambda_1 +3} \frac{1}{|y-z|} dz\\
&  \geq \left(\delta_1\hat{\delta} c -C \varepsilon^{2\sigma/n} \right) (|y| - \lambda) \geq 0
\endaligned
$$
if $\varepsilon$ is sufficiently small.  This  and  \eqref{16wj-=0934}  contradict the definition of $\bar{\lambda}$, and so the proof  in Case 2 is finished.  The proof of Theorem \ref{IEthm01} under the assumption (K1) is completed. 
\hfill$\square$

\vskip0.10in

\section{Local estimates under the assumption (K2)}\label{S5000} 
In this section, by using the method of moving spheres introduced by Li-Zhu \cite{Li-Zhu,Li04},  we shall prove Theorem \ref{IEthm01} under the assumption (K2) in the spirit of the works of Chen-Lin \cite{Chen-Lin97}  and Taliaferro-Zhang \cite{TZ}.   Similar to Section \ref{S3000}, we have to set up a framework to fit the integral equation rather than dealing directly with  differential equations as in \cite{Chen-Lin97,TZ}.  In addition to developing integral  techniques to overcome the lack of maximum principle, the non-locality of integral equation will bring a new difficulty under the present assumption (K2).  

\vskip0.10in

\noindent{\it Proof of Theorem \ref{IEthm01} under the assumption (K2). }  Suppose by contradiction that there exists a sequence $\{x_j\}_{j=_1}^\infty  \subset B_1 \backslash \Sigma$ such that   
$$
d_j:=\textmd{dist} (x_j, \Sigma) \to 0~~~~~ \textmd{as}  ~ j\to \infty, 
$$
but
$$
d_j^{\frac{n-2\sigma}{2}} u(x_j) \to \infty~~~~~ \textmd{as}  ~ j\to \infty. 
$$
Without  loss of generality we may assume that $0\in \Sigma$ and $x_j\to 0$ as $j\to \infty$. 

\vskip0.10in 

{\it Step 1.  We show that $x_j$ can be chosen as the local maximum points of $u$. Moreover, the functions $u(x_j)^{-1} u( x_j +  u(x_j)^{-\frac{2}{n-2\sigma}} y)$ converge in $C_{loc}^2(\mathbb{R}^n)$, after passing a subsequence,  to a positive function $U_0 \in C^2(\mathbb{R}^n)$ where $U_0$ satisfies   
\begin{equation}\label{S3-W1-0376+08g}
\begin{cases}
U_0(y) = \int_{\mathbb{R}^n} \frac{K(0) U_0(z)^{\frac{n+2\sigma}{n-2\sigma}}}{|y -z |^{n-2\sigma}} dz~~~~~ \textmd{for} ~ y \in \mathbb{R}^n, \\
\max_{\mathbb{R}^n} U_0=U_0(0)=1.
\end{cases}
\end{equation}
}

{\it Step 2. For $n > 2\sigma$, we have  $\nabla K(0)=0$ under the assumption of $K \in C^1(B_2)$. }

\vskip0.10in 

The proofs of these two steps are the same as those of Step 1 and Step 2 in Section \ref{S3000}, we omit their proofs.  Since the following proof is long, we first explain the idea and the sketch of the proof.   Let $w_j$ be the scaled function in \eqref{5SqSS3-01} (here we don't need to shift it) and let $w_j^\lambda$ be the Kelvin transformation of $w_j$ in \eqref{5Sq21SS3-022}.  In order to take full advantage of  the property in the assumption (K2) that $c(\cdot)$ is sufficiently small near $0$,  we will restrict the integral equation \eqref{Int} to a small ball.  Thus,  for some small $\tau \in (0, \frac{1}{4})$ we denote 
$$
\Pi_j:=  \left\{y\in \mathbb{R}^n :   x_j + u(x_j)^{-\frac{2}{n-2\sigma}} y  \in  B_\tau  \backslash \Sigma \right\}. 
$$
Note that the set $\Pi_j$ depends on $\tau$. Let $\varphi_\lambda=w_j - w_j^\lambda$ and let $\Phi_\lambda$ be as in \eqref{SL4-31=huy2=02} where $\lambda \in [1/2, 2]$.  Then $\varphi_\lambda + \Phi_\lambda$ satisfies the integral inequality \eqref{SL4-312=04=huy}.   Moreover, by Lemma \ref{S5^326=01} the remainder term $J_\lambda$ has a lower bound which is independent of $\tau$.  After performing the above restriction to \eqref{Int}, we need extra efforts to obtain estimates for $\Phi_\lambda$.  Based on the assumption (K2) about the function $K$, we will complete the proof in three situations. Denote   $M_j=u(x_j)$. 

\begin{itemize}

\item [$(1)$] $2\sigma < n < 2\sigma +2$.  By the assumption (K2) we know that $K\in \mathcal{C}^\alpha(B_2)$ with $\alpha=\frac{n-2\sigma}{2}<1$. 
Furthermore,  $\Phi_\lambda$ satisfies the estimates in Lemma \ref{LemmaS5-327=01}.  By choosing sufficiently small $\varepsilon_1>0$  and $\tau>0$ we can construct  
$$
H_\lambda(y)  = - \varepsilon_1 M_j^{-1} (\lambda^{2\sigma-n} - |y|^{2\sigma -n} ) + \Phi_\lambda(y), ~~~~~~ y \in \Pi_j \backslash \overline{B}_\lambda  
$$
such that $H_\lambda <0$  for all $\lambda \in [1/2, 2]$.  With the help of  Lemma \ref{S5^326=01}, the method of moving spheres can be applied to $\varphi_\lambda + H_\lambda$ to reach a contradiction.  
\item [$(2)$] $n=2\sigma+2$. In this case,  we have $K\in C^1(B_2)$.  By Step 2  we know $\nabla K(0)=0$. Similar to situation $(1)$, $\Phi_\lambda$ also satisfies the estimates in Lemma \ref{LemmaS5-327=01}. The rest of the proof is the same as that of situation $(1)$. 

\item [$(3)$] $n > 2\sigma+2$. By the assumption (K2) we have $K\in \mathcal{C}^\alpha(B_2)$ with $\alpha=\frac{n-2\sigma}{2}>1$.  It follows from Step 2  that $|\nabla K(x_j)| \to 0$.   Furthermore, using the moving sphere method  as in Step 2 of Section \ref{S3000}  and the assumption (K2) we obtain $|\nabla K(x_j)|^{\frac{1}{\alpha-1}} M_j^{\frac{2}{n-2\sigma}} \to 0$ (see Lemma \ref{StepL5-401=01}). This together with the assumption (K2) yields that  the estimates for  $\Phi_\lambda$ in Lemma \ref{LemmaS5-327=01} still hold.  The rest of the proof is the same as that of situation $(1)$. 

\end{itemize}

Now we return to the detailed proof of Theorem \ref{IEthm01} under the assumption (K2).   We denote  $M_j=u(x_j)$ and define  
\begin{equation}\label{5SqSS3-01}
w_j(y)=M_j^{-1} u \left( x_j + M_j^{-\frac{2}{n-2\sigma}} y \right), ~h_j(y)=M_j^{-1}  h \left( x_j + M_j^{-\frac{2}{n-2\sigma}} y \right) ~  \textmd{in}~ \Omega_j, 
\end{equation} 
where
\begin{equation}\label{5Sqhhmm-9e7=5}
\Omega_j =\left\{y\in \mathbb{R}^n :   x_j + M_j^{-\frac{2}{n-2\sigma}} y   \in  B_{2} \backslash \Sigma \right\}. 
\end{equation} 
It follows from Step 1 and the classification results in \cite{CLO06} or \cite{Li04} that, modulo a positive constant, $w_j(y)$ converges in $C^2$ norm to the standard bubble $U_0(y) = (1+ |y|^2)^{-\frac{n-2\sigma}{2}}$  on every compact subset of $\mathbb{R}^n$.  We also suppose that  $w_j$ is to be identically $0$ outside $\Omega_j$ and $K$ is  to be identically $0$ outside $B_{2}$.  For $\lambda >0$, let  
\begin{equation}\label{5Sq21SS3-022}
w_j^\lambda(y) =\left( \frac{\lambda}{|y|} \right)^{n-2\sigma} w_j \left( \frac{\lambda^2 y}{|y|^2} \right),~~h_j^\lambda(y) =  \left( \frac{\lambda}{|y|} \right)^{n-2\sigma}  h_j \left( \frac{\lambda^2 y}{|y|^2} \right)
\end{equation}
and let 
\begin{equation}\label{5Sq21SS3-033}
K_j(y) =K \left( x_j + M_j^{-\frac{2}{n-2\sigma}} y  \right). 
\end{equation}
Then 
\begin{equation}\label{5Sq21SS3-044}
w_j(y) = \int_{\mathbb{R}^n} \frac{K_j(z) w_j(z)^{\frac{n+2\sigma}{n-2\sigma}}}{|y -z |^{n-2\sigma}} dz +  h_j(y)~~~~~\textmd{for} ~ y\in \Omega_j. 
\end{equation}
By \eqref{ABC-01},  $w_j^\lambda$ satisfies  
\begin{equation}\label{5Sq21SS3-055}
w_j^\lambda(y) = \int_{\mathbb{R}^n} \frac{K_j(z^\lambda) w_j^\lambda(z)^{\frac{n+2\sigma}{n-2\sigma}}}{|y -z |^{n-2\sigma}} dz +   h_j^\lambda(y)~~~~~\textmd{for} ~ y \in \Omega_j \backslash   \overline{B}_{\lambda},  
\end{equation}
where $z^\lambda = \frac{\lambda^2 z}{|z|^2}$ is the inversion of $z$ with respect to $\partial B_\lambda$.

As Lemma \ref{LAn-01} we also have 
\begin{lemma}\label{gmhuL5SAn-01}
Let $\lambda_0= \frac{1}{2}$ and $\lambda_1 =2$.  Then there exist $\varepsilon_0>0$ and $j_0>1$ such that for all $j \geq j_0$, 
\begin{equation}\label{21hmuSS3&8-001}
w_j(y) - w_j^{\lambda_0}(y) \geq \varepsilon_0 (|y| - \lambda_0) |y|^{2\sigma-1-n} + \varepsilon_0 M_j^{-1} (\lambda_0^{2\sigma-n} - |y|^{2\sigma-n}), ~~~ y \in \Omega_j \backslash  \overline{B}_{\lambda_0}.
\end{equation}
Moreover, there exists $y^* \in B_{2\lambda_1} \backslash  \overline{B}_{\lambda_1}$ such that for all $j \geq j_0$, 
\begin{equation}\label{21hmuSS3&8-003}
w_j(y^*) -  w_j^{\lambda_1}(y^*) \leq -\varepsilon_0. 
\end{equation}
\end{lemma}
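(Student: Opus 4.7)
The plan is to mirror the proof of Lemma \ref{LAn-01}, modified to account for the fact that here $w_j$ is \emph{not} shifted, so it converges in $C^2_{\loc}(\mathbb{R}^n)$ to the standard bubble $U_0(y) = (1+|y|^2)^{-(n-2\sigma)/2}$ centered at the origin. The key structural observation is that $U_0$ is precisely the profile invariant under the Kelvin transform at radius $\lambda = 1$, so the critical radius lies strictly between $\lambda_0 = 1/2$ and $\lambda_1 = 2$, giving the right sign at each endpoint.

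The first step is to prove an analog of Lemma \ref{InK-03} for $U_0$. By the mean value theorem together with the identity \eqref{YGmgt-HK-409=02}, one computes
\begin{equation*}
U_0(y) - U_0^\lambda(y) = \frac{n-2\sigma}{2}\cdot \frac{(|y|^2 - \lambda^2)(1 - \lambda^2)}{\lambda^2 \eta^{(n-2\sigma+2)/2}},
\end{equation*}
where $\eta$ lies between $1+|y|^2$ and $\tfrac{|y|^2}{\lambda^2}(1+|y^\lambda|^2)$. For $\lambda_0 = 1/2$ the factor $1-\lambda_0^2 = 3/4$ is positive, so one extracts
\begin{equation*}
U_0(y) - U_0^{\lambda_0}(y) \geq c_0 (|y|-\lambda_0)\, |y|^{2\sigma-n-1} \qquad \textmd{for all } |y|\geq \lambda_0,
\end{equation*}
while for $\lambda_1 = 2$ the factor $1-\lambda_1^2$ is strictly negative, producing $U_0(y) - U_0^{\lambda_1}(y)  <  0$ for $|y|>\lambda_1$ with a uniform negative lower bound on any compact subset of $B_{2\lambda_1}\setminus\overline{B}_{\lambda_1}$. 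This immediately yields \eqref{21hmuSS3&8-003} by picking any such $y^*$ and invoking $C^2_{\loc}$ convergence of $w_j$ to $U_0$.

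For \eqref{21hmuSS3&8-001}, I would split $\Omega_j\setminus \overline{B}_{\lambda_0}$ into a bounded piece $\{\lambda_0 < |y|\leq R_1\}$ and an outer piece $\Omega_j\setminus B_{R_1}$ for a large $R_1$ to be fixed. On the bounded piece, $C^2_{\loc}$ convergence combined with the bubble estimate above yields \eqref{21hmuSS3&8-001} directly for large $j$. On the outer piece, I would replicate the strategy of Lemma \ref{LAn-01}: using that $U_0^{\lambda_0}(y) = (\lambda_0/|y|)^{n-2\sigma}U_0(\lambda_0^2 y/|y|^2) \to \lambda_0^{n-2\sigma}|y|^{2\sigma-n}$ with $\lambda_0^{n-2\sigma}<1$, one fixes $\varepsilon_2>0$ small so that $w_j^{\lambda_0}(y)\leq (1-2\varepsilon_2)|y|^{2\sigma-n}$ for $|y|\geq R_1$. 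A matching lower bound $w_j(y)\geq (1-\varepsilon_2)|y|^{2\sigma-n}$ is produced by starting from the integral representation \eqref{5Sq21SS3-044}, restricting integration to $B_{R_1}$, and using that $\int_{\mathbb{R}^n} K(0) U_0(z)^{(n+2\sigma)/(n-2\sigma)} |y-z|^{2\sigma-n}\, dz = U_0(y)$ together with tail estimates that become negligible for $R_1$ large. The second summand $\varepsilon_0 M_j^{-1}(\lambda_0^{2\sigma-n}-|y|^{2\sigma-n})$ in \eqref{21hmuSS3&8-001} arises from the uniform lower bound $u\geq A_0>0$ on $B_2\setminus \Sigma$ established via \eqref{Int} (see \eqref{Esi-301}), which gives $w_j\geq A_0 M_j^{-1}$ throughout $\Omega_j$, while $w_j^{\lambda_0}(y)\leq C|y|^{2\sigma-n}$ decays.

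The only point requiring mild care — but not a genuine obstacle — is ensuring that the asymptotic lower bound for $w_j$ outside $B_{R_1}$ is uniform in $j$, so that $\varepsilon_0$ can be chosen independently of $j$; this is achieved by first fixing $R_1$ large so the tail integrals $\int_{|z|\geq R_1}$ are controllably small, then using $C^2$ convergence on $B_{R_1}$ to transfer the mass estimate from $U_0$ to $w_j$ for all sufficiently large $j$. Combining the two regions and taking $\varepsilon_0$ smaller than both the bounded-region constant and the outer-region constant closes the proof.
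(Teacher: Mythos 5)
Your proposal is correct and matches the approach the paper intends (the paper gives no separate proof and simply refers to Lemma \ref{LAn-01}). You correctly adapt the key formula \eqref{YGmgt-HK-409=05} to the unshifted bubble, noting that the factor $1-\lambda^2$ is positive at $\lambda_0=1/2$ and negative at $\lambda_1=2$ since the Kelvin-critical radius of $U_0$ is $\lambda=1$; the remaining splitting into $\{\lambda_0<|y|\le R_1\}$ (where $C^2_{\loc}$ convergence plus the analog of Lemma \ref{InK-03}, including its boundary-derivative estimate, close the inequality) and $\Omega_j\setminus B_{R_1}$ (where the integral representation \eqref{5Sq21SS3-044} with tail control and the uniform lower bound $w_j\ge A_0 M_j^{-1}$ yield the two summands) is exactly the argument of Lemma \ref{LAn-01}.
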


For some small $\tau \in (0, \frac{1}{4})$ to be determined later,  we denote 
\begin{equation}\label{21=huyPAI01}
\Pi_j:=  \left\{y\in \mathbb{R}^n :   x_j + M_j^{-\frac{2}{n-2\sigma}} y  \in  B_\tau  \backslash \Sigma \right\} \subset \Omega_j 
\end{equation} 
and
\begin{equation}\label{21=huyPAI02}
\Sigma_j:=\left\{y\in \mathbb{R}^n :  x_j + M_j^{-\frac{2}{n-2\sigma}} y   \in  \Sigma \right\}. 
\end{equation}
Note that we have $\mathcal{L}^n (\Sigma_j)=0$ due to $\mathcal{L}^n (\Sigma)=0$.   It follows from the same arguments as in Lemma 3.1 of \cite{JX19}  that  for all large $j$, there holds
\begin{equation}\label{3-Local3=huy}
h_j^{\lambda}(y) \leq h_j(y)~~~~~~ \forall ~  y\in \Pi_j \backslash \overline{B}_\lambda,  ~ \lambda\in [1/2,  2]. 
\end{equation}
Let $\varphi_\lambda(y) = w_j(y) - w_j^\lambda(y)$, where we omit $j$ in the notation for brevity.   By \eqref{ABC-0e571}, \eqref{3-Local3=huy} and $K_j \equiv 0$ on $\Omega_j^c \backslash \Sigma_j$,    we have for $\lambda\in [1/2,  2]$ and $y\in \Pi_j \backslash \overline{ B}_\lambda$ that 
\begin{equation}\label{yh38-01=huy}
\aligned
\varphi_\lambda(y) & \geq \int_{B_\lambda^c} G(0,\lambda; y, z) \left( K_j(z) w_j(z)^{\frac{n+2\sigma}{n-2\sigma}} - K_j(z^\lambda) w_j^\lambda(z)^{\frac{n+2\sigma}{n-2\sigma}} \right) dz  \\
&= \int_{\Pi_j \backslash B_\lambda} G(0, \lambda; y, z) b_\lambda(z) \varphi_\lambda(z) dz + J_\lambda(y) -\Phi_\lambda(y),
\endaligned
\end{equation}
where 
\begin{equation}\label{SL4-31=huy2=01}
b_\lambda(y)= K_j(y) \frac{ w_j(y)^{\frac{n+2\sigma}{n-2\sigma}} -  w_j^\lambda(y)^{\frac{n+2\sigma}{n-2\sigma}} }{w_j(y) - w_j^\lambda(y)},
\end{equation}
\begin{equation}\label{SL4-31=huy2=02}
\Phi_\lambda(y)= \int_{\Omega_j \backslash B_\lambda} G(0, \lambda; y, z) \left( K_j(z^\lambda) - K_j(z) \right) w_j^\lambda(z)^{\frac{n+2\sigma}{n-2\sigma}} dz 
\end{equation}
and
\begin{equation}\label{SL4-312=03=huy}
\aligned
J_\lambda(y) & = \int_{\Omega_j \backslash \Pi_j} G(0,\lambda; y, z) K_j(z)\left( w_j(z)^{\frac{n+2\sigma}{n-2\sigma}} -  w_j^\lambda(z)^{\frac{n+2\sigma}{n-2\sigma}} \right) dz\\
& ~~~~~ - \int_{\Omega_j^c} G(0,\lambda; y, z) K_j(z^\lambda) w_j^\lambda(z)^{\frac{n+2\sigma}{n-2\sigma}} dz.
\endaligned
\end{equation}
Notice that $b_\lambda(y)$  is always non-negative.  Thus, $\varphi_\lambda + \Phi_\lambda$ satisfies the integral inequality   
\begin{equation}\label{SL4-312=04=huy}
\varphi_\lambda(y) +  \Phi_\lambda(y) \geq \int_{\Pi_j \backslash B_\lambda} G(0, \lambda; y, z) b_\lambda(z) \varphi_\lambda(z) dz + J_\lambda(y),  ~~~~~ y \in \Pi_j \backslash \overline{B}_\lambda, 
\end{equation}
where $\lambda \in [1/2,  2]$. 

Now, we give an estimate for $J_\lambda$ defined in \eqref{SL4-312=03=huy}.  Suppose $\lambda_0=1/2$ and $\lambda_1=2$. 
\begin{lemma}\label{S5^326=01} 
There exists a constant $\beta_0>0$ independent of  $\tau$ such that for any $\lambda_0 \leq \lambda \leq \lambda_1$ and for all large $j$,    
\begin{equation}\label{S5^326y-05} 
J_\lambda(y)  \geq 
\begin{cases}
\beta_0 (|y| - \lambda) u(x_j)^{-1}, ~~~~~   &\textmd{if}~   \lambda \leq |y| \leq \lambda_1  +1, \\
\beta_0  u(x_j)^{-1}, ~~~~~ & \textmd{if}~ |y| > \lambda_1 +1,  ~ y \in \Pi_j. 
\end{cases}
\end{equation}
\end{lemma}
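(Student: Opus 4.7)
The proof will closely mirror that of Lemma \ref{hhff=01} in Section \ref{S3000}, with one additional point of vigilance: I must arrange the estimates so that the resulting constant $\beta_0$ does not degenerate as the auxiliary parameter $\tau$ shrinks. First, by applying the integral equation \eqref{Int} and the crude bound $|x - y| \leq 4$ for $x, y \in B_2$, one obtains the universal lower bound $u(x) \geq A_0 := 4^{2\sigma - n}\int_{B_2} K(y) u(y)^{(n+2\sigma)/(n-2\sigma)} dy > 0$ on $B_2 \setminus \Sigma$, which after rescaling gives $w_j(z) \geq A_0 / M_j$ on $\Omega_j \setminus \Sigma_j$. On the other hand, since $w_j$ converges in $C^2_{\loc}(\mathbb{R}^n)$ to the bubble $U_0$, one has $\max_{B_{\lambda_1+1}} w_j \leq C$, and consequently $w_j^\lambda(z) \leq C (\lambda/|z|)^{n-2\sigma} \leq C M_j^{-2}$ on $\Omega_j \setminus \Pi_j$ (where $|z| \gtrsim M_j^{2/(n-2\sigma)}$ for $j$ large). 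These two bounds combine to give
\begin{equation*}
w_j(z)^{(n+2\sigma)/(n-2\sigma)} - w_j^\lambda(z)^{(n+2\sigma)/(n-2\sigma)} \geq \tfrac12 (A_0/M_j)^{(n+2\sigma)/(n-2\sigma)}
\end{equation*}
on $\Omega_j \setminus \Pi_j$ for all large $j$, independent of $\tau$.

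The crucial step for $\tau$-independence is to identify a subregion of $\Omega_j \setminus \Pi_j$ that is controlled uniformly in $\tau$. Set $\rho_j := M_j^{2/(n-2\sigma)} \to \infty$ and consider the annulus $\mathcal{A}_j := \{\tfrac54 \rho_j \leq |z| \leq \tfrac74 \rho_j\}$. A point $z \in \mathcal{A}_j$ corresponds to $x = x_j + M_j^{-2/(n-2\sigma)} z$ with $|x| \in [\tfrac54 - |x_j|, \tfrac74 + |x_j|]$, and since $x_j \to 0$ this places $x$ in the annulus $\{1 \leq |x| \leq 2\}$ for $j$ large; in particular $\mathcal{A}_j \subset \Omega_j \setminus \Pi_j$ for \emph{every} $\tau \in (0, 1/4)$ once $j$ is sufficiently large. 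This is the key observation that prevents $\tau$ from entering the final constant.

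Second, I would reuse verbatim the Green-function estimates \eqref{K011} and \eqref{K022}: for $\lambda_0 \leq \lambda \leq |y| \leq \lambda_1 + 1$ and $|z|$ comparable to $\rho_j$,
\begin{equation*}
G(0, \lambda; y, z) \geq \delta_1 |y - z|^{2\sigma - n}(|y| - \lambda),
\end{equation*}
while for $|y| \geq \lambda_1 + 1$ one has $G(0, \lambda; y, z) \geq \delta_3 |y - z|^{2\sigma - n}$, both with constants depending only on $n, \sigma$. Using $\min_{B_2} K =: A_1 > 0$ and restricting integration to $\mathcal{A}_j \setminus \Sigma_j$ (which has almost full measure since $\mathcal{L}^n(\Sigma_j) = 0$), together with the elementary bound $\int_{\mathcal{A}_j} |y - z|^{2\sigma - n} dz \geq c \rho_j^{2\sigma}$ valid for $|y| \leq 2 \rho_j$, one obtains
\begin{equation*}
\int_{\Omega_j \setminus \Pi_j} G(0, \lambda; y, z) K_j(z) \bigl[w_j^{(n+2\sigma)/(n-2\sigma)} - (w_j^\lambda)^{(n+2\sigma)/(n-2\sigma)}\bigr] dz \geq c_0 (|y| - \lambda) M_j^{-1}
\end{equation*}
for the first piece, with $c_0 = c_0(n, \sigma, A_0, A_1) > 0$ independent of $\tau$. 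The second term of $J_\lambda$, supported on $\Omega_j^c$, is controlled exactly as in Lemma \ref{hhff=01}: there $w_j^\lambda(z) \leq C |z|^{2\sigma - n}$ and the resulting integral is bounded by $C (|y| - \lambda) M_j^{-2n/(n-2\sigma)}$, which is $o(M_j^{-1})$. Subtracting, the bound $J_\lambda(y) \geq \beta_0 (|y| - \lambda) M_j^{-1}$ follows with $\beta_0 := c_0/2$ for $j$ large. The case $|y| \geq \lambda_1 + 1$ is completely parallel, using the $\tau$-independent lower bound on $G$ above.

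The main subtlety is Step 2: recognising that the annulus $\mathcal{A}_j$, scaled back to the original variable, lives in a fixed region $\{|x| \sim 1\}$ that survives any small $\tau$. Once this is in place, the $\tau$-dependence simply never appears in the constants, and the rest of the proof reproduces the argument of Lemma \ref{hhff=01} with only cosmetic changes (there being no large parameter $R$ to track here).
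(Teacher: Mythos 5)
Your proof is correct and follows essentially the same route as the paper's. The paper integrates the positive part of $J_\lambda$ over the $\tau$-dependent annulus $\{\tfrac{3\tau}{2}\rho_j \le |z| \le \tfrac{7}{4}\rho_j\}$ and relies on the observation that the resulting lower bound is $\tau$-independent; your explicit choice of the fixed annulus $\{\tfrac{5}{4}\rho_j \le |z| \le \tfrac{7}{4}\rho_j\} \subset \Omega_j \setminus \Pi_j$ makes this cleaner. One small precision: on all of $\Omega_j \setminus \Pi_j$ one only has $|z| \ge \tfrac{\tau}{2}\rho_j$, so the bound should read $w_j^\lambda(z) \le C\tau^{-(n-2\sigma)}M_j^{-2}$ (and the threshold $j_0$ for the inequality $w_j^p - (w_j^\lambda)^p \ge \tfrac12 w_j^p$ accordingly depends on $\tau$, as the paper notes), though this does not affect the argument since the final constant $\beta_0$ remains $\tau$-free.
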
 
\begin{proof} 
The proof is similar to that of Lemma \ref{hhff=01}. The main difference here  is that the set $\Pi_j$ defined in \eqref{21=huyPAI01} depends on $\tau$, but we require that the constant $\beta_0$ in \eqref{S5^326y-05} cannot depend on $\tau$. For the sake of completeness, we also include the  proof.

For any $z\in \mathbb{R}^n \backslash (\Pi_j \cup \Sigma_j)$  and $\lambda_0 \leq \lambda \leq \lambda_1 $,  we have $|z| \geq \frac{\tau}{2} u(x_j)^{\frac{2}{n-2\sigma}}$ for all large $j$ and thus   
$$
w_j^\lambda(z) \leq \left( \frac{\lambda}{|z|} \right)^{n-2\sigma} \max_{B_{\lambda_1}} w_j \leq C \tau^{-(n-2\sigma)} u(x_j)^{-2},  
$$
where $C>0$ depends  only on $n$ and $\sigma$.   On the other hand,  by the equation \eqref{Int} we have
\begin{equation}\label{S5^326Esi-301}
u(x) \geq 4^{2\sigma-n} \int_{B_2} K(y) u(y)^{\frac{n+2\sigma}{n-2\sigma}} dy=:A_0 > 0 ~~~~~ \textmd{for} ~ \textmd{all}  ~ x\in B_2 \backslash \Sigma,
\end{equation} 
and by the definition of $w_j$, we obtain
\begin{equation}\label{S5^326y-03}
w_j(y) \geq \frac{A_0}{u(x_j)} ~~~~~~ \textmd{for} ~ y \in \Omega_j \backslash \Pi_j. 
\end{equation}
Therefore,  for any $\tau \in (0, 1/4)$ there exists $j_0$  such that for all $ j\geq j_0$ we have   
$$
w_j(z)^{\frac{n+2\sigma}{n-2\sigma}}  -  w_j^\lambda(z)^{\frac{n+2\sigma}{n-2\sigma}} \geq \frac{1}{2} w_j(z)^{\frac{n+2\sigma}{n-2\sigma}} ~~~~~~ \textmd{in} ~ \Omega_j \backslash \Pi_j. 
$$
As in Lemma \ref{hhff=01}, we still have for $\lambda_0 \leq \lambda  \leq |y| \leq \lambda_1 +1$ and $ |z| \geq \lambda_1+2  $ that 
\begin{equation}\label{S5^326K011}
\frac{\delta_1}{|y -z|^{n-2\sigma}} (|y| - \lambda) \leq G(0,\lambda; y, z) \leq  \frac{\delta_2}{|y -z|^{n-2\sigma}} (|y| - \lambda)
\end{equation}
 where $\delta_1, \delta_2>0$  depend only on $n$ and $\sigma$.     Moreover, we can verify that for $|y|\geq \lambda_1 +1$ and $|z| \geq  \lambda_1 +2$,
\begin{equation}\label{K022=06dh9}
\frac{\delta_3}{|y -z|^{n-2\sigma}}  \leq G(0,\lambda; y, z) \leq  \frac{1}{|y -z|^{n-2\sigma}} 
\end{equation}
for some $\delta_3\in (0, 1)$ depending only on $n$ and $\sigma$.

Denote $\rho_j:=u(x_j)^{\frac{2}{n-2\sigma}} $ and  $A_1:=\min_{B_2} K >0$.   Then,   for $\lambda \leq |y| \leq \lambda_1 +1$ and for  large $j$,   we have
$$
\aligned
J_\lambda(y) & \geq \frac{A_1}{2}\left( \frac{A_0}{u(x_j)} \right)^{\frac{n+2\sigma}{n-2\sigma}} \int_{\Omega_j \backslash \Pi_j}  \frac{\delta_1}{|y -z|^{n-2\sigma}} (|y| - \lambda)  dz \\
&~~~~~ - C \int_{\Omega_j^c} \frac{\delta_2}{|y -z|^{n-2\sigma}} (|y| - \lambda)  \left(  \frac{\lambda}{|z|}\right)^{n+2\sigma} dz \\
& \geq \frac{A_1A_0^{\frac{n+2\sigma}{n-2\sigma}}}{2}   (|y| - \lambda)  u(x_j)^{-\frac{n+2\sigma}{n-2\sigma}}  \int_{\{\frac{3\tau}{2} \rho_j \leq  |z| \leq \frac{7}{4}\rho_j \} \backslash \Sigma_j} \frac{1}{|y -z|^{n-2\sigma}}  dz \\
& ~~~~~ -C (|y| - \lambda) \int_{\{ |z| \geq \frac{7}{4}\rho_j \} \cup \Sigma_j}  \frac{1}{|y -z|^{n-2\sigma}} \left( \frac{1}{|z|}\right)^{n+2\sigma} dz\\
& \geq C (|y| - \lambda) u(x_j)^{-1} - C (|y| - \lambda) u(x_j)^{-\frac{2n}{n-2\sigma}} \\
& \geq \beta_0 (|y| - \lambda) u(x_j)^{-1}
\endaligned
$$
for some constant $\beta_0>0$  independent of  $\tau$, where we have used $\mathcal{L}^n(\Sigma_j) =0$  and $u(x_j)  \to \infty$.  Similarly, for $|y| \geq \lambda_1 +1$ and $y \in \Pi_j$, we have
$$
J_\lambda(y)\geq C  u(x_j)^{-1} - C  u(x_j)^{-\frac{2n}{n-2\sigma}}  \geq \beta_0  u(x_j)^{-1}
$$
for another constant $\beta_0>0$ independent of  $\tau$. Lemma \ref{S5^326=01} is established. 
\end{proof}  

To estimate $\Phi_\lambda$  with  $\lambda \in [1/2, 2]$,  we denote    
\begin{equation}\label{SL5-326=001} 
Q_\lambda(z) = \left( K_j(z^\lambda) - K_j(z) \right) w_j^\lambda(z)^{\frac{n+2\sigma}{n-2\sigma}} ~~~~~ \textmd{for} ~ z \in\Omega_j \backslash B_\lambda. 
\end{equation}
Since $w_j(y)$ converges  in $C_{loc}^2(\mathbb{R}^n)$ to $U_0(y)=(1+ |y |^2)^{-\frac{n-2\sigma}{2}}$, we have for large $j$ that 
\begin{equation}\label{SL5-326=003}
0 \leq w_j^\lambda(z) \leq 2  \left(\frac{\lambda}{|z|} \right)^{n-2\sigma} \left( \frac{1}{1 + \left| z^\lambda\right|^2}\right)^{\frac{n-2\sigma}{2}} \leq C |z|^{2\sigma-n}  ~~~~~~ \textmd{for} ~ z \in \Omega_j \backslash B_\lambda, 
\end{equation}
where $C>0$ depends only on $n$ and $\sigma$.    In order to describe  the behavior of $Q_\lambda$  on $\Omega_j \backslash B_\lambda$ we define the following set 
\begin{equation}\label{SL5-326=004}
\mathcal{P}_j:=  \left\{y\in \mathbb{R}^n :   x_j + M_j^{-\frac{2}{n-2\sigma}} y  \in  B_{2\tau}  \backslash \Sigma \right\} \subset \Omega_j. 
\end{equation}  

We next complete the proof of  Theorem \ref{IEthm01} under the assumption (K2) in three steps. 

\vskip0.10in  

{\it Step 3. We prove Theorem \ref{IEthm01} under the assumption (K2) when  $2\sigma < n < 2\sigma +2$. } By  the  assumption  on  $K$ and \eqref{SL5-326=003} we have
\begin{equation}\label{SL5-327=001}
|Q_\lambda(z)| \leq
\begin{cases}
 c(\tau) M_j^{-1}  (|z| - \lambda)^\alpha  |z|^{-2\sigma-n}~~~~~~~~ & \textmd{for}  ~ z \in \mathcal{P}_j \backslash B_\lambda, \\
 C M_j^{-1}  |z|^{\alpha-2\sigma-n} ~~~~~~~~ &  \textmd{for}  ~ z \in\Omega_j \backslash \mathcal{P}_j, 
\end{cases} 
\end{equation} 
where $ c(\cdot)$ is a nonnegative continuous function with $c(0)=0$.   Recall also that $\alpha=\frac{n-2\sigma}{2}$.  By the definition of $Q_\lambda$, $\Phi_\lambda$ given in \eqref{SL4-31=huy2=02} can be written as 
\begin{equation}\label{SL5-326=002}
\Phi_\lambda(y)= \int_{\Omega_j \backslash B_\lambda} G(0, \lambda; y, z) Q_\lambda(z) dz. 
\end{equation} 
We  have  the  following  estimates  for $\Phi_\lambda$.  
\begin{lemma}\label{LemmaS5-327=01} 
For any $\lambda \in [1/2, 2]$,  we have 
\begin{equation}\label{SL5-327=003}
|\Phi_\lambda (y)| \leq 
\begin{cases}
\Big( c(\tau)  M_j^{-1} +  \tau^{-\frac{n+2\sigma}{2}} M_j^{-1 -\frac{n + 2\sigma}{n-2\sigma}} \Big)  (|y| - \lambda) ~ & \textmd{for} ~ y \in B_{4\lambda} \backslash B_\lambda, \\
c(\tau)  M_j^{-1}  (|y|^{\alpha -n} + |y|^{2\sigma-n} \log |y|) + C  \tau^{-\frac{n+2\sigma}{2}} M_j^{-1-\frac{n + 2\sigma}{n-2\sigma}}  ~ & \textmd{for} ~ y \in  \Pi_j  \backslash B_{4\lambda},  
\end{cases}
\end{equation} 
where $ c(\cdot)$ is a nonnegative continuous function satisfying  $c(0)=0$ and $C>0$ is a constant  depending only on $n$, $\sigma$ and $K$.  
\end{lemma}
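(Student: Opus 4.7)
My plan is to mimic the structure of Lemma \ref{LS4-312=01} but adapted to the much weaker $Q_\lambda$-bound \eqref{SL5-327=001}, which here comes in two very different pieces coming from $\mathcal{P}_j\setminus B_\lambda$ (small prefactor $c(\tau)$) and from $\Omega_j\setminus\mathcal{P}_j$ (all the large-$|z|$ mass). Accordingly, I would split
$$
\Phi_\lambda(y)=\int_{\mathcal{P}_j\backslash B_\lambda} G(0,\lambda;y,z)Q_\lambda(z)\,dz+\int_{\Omega_j\backslash\mathcal{P}_j}G(0,\lambda;y,z)Q_\lambda(z)\,dz=:\Phi_\lambda^{(1)}(y)+\Phi_\lambda^{(2)}(y),
$$
and estimate each piece using the relevant case of Lemma \ref{InK-01}.

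For $\Phi_\lambda^{(2)}$, the key observation is that every $z\in\Omega_j\setminus\mathcal{P}_j$ obeys $|z|\ge c\tau M_j^{\frac{2}{n-2\sigma}}$ and $\Omega_j\subset B_{CM_j^{2/(n-2\sigma)}}$. Using the kernel bound $G(0,\lambda;y,z)\le C(|y|-\lambda)\lambda^{-1}|z|^{2\sigma-n}$ when $\lambda<|y|\le 4\lambda$ (from Lemma \ref{InK-01}(2)) and $G\le C|y-z|^{2\sigma-n}\le C|z|^{2\sigma-n}$ when $|y|\ge 4\lambda$, the $z$-integral reduces to $\int_{|z|\ge c\tau M_j^{2/(n-2\sigma)}}|z|^{\alpha-2n}\,dz$. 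With $\alpha=(n-2\sigma)/2$ we have $\alpha-n=-\frac{n+2\sigma}{2}$ and $\frac{2(\alpha-n)}{n-2\sigma}=-\frac{n+2\sigma}{n-2\sigma}$, and so this piece contributes exactly the $\tau^{-(n+2\sigma)/2}M_j^{-1-(n+2\sigma)/(n-2\sigma)}$ term of \eqref{SL5-327=003}, multiplied by $(|y|-\lambda)$ in the first region and by $1$ in the second.

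For $\Phi_\lambda^{(1)}$, the prefactor $c(\tau)M_j^{-1}$ is already in the form that we want, so it suffices to show that the remaining integral
$$
\int_{\mathcal{P}_j\backslash B_\lambda}G(0,\lambda;y,z)(|z|-\lambda)^\alpha|z|^{-2\sigma-n}\,dz
$$
is bounded by $C(|y|-\lambda)$ for $\lambda<|y|\le 4\lambda$ and by $C(|y|^{\alpha-n}+|y|^{2\sigma-n}\log|y|)$ for $|y|\ge 4\lambda$, uniformly in $j$. In the first region I would split $\mathcal{P}_j\setminus B_\lambda$ into the same three pieces $\mathcal{A}_1,\mathcal{A}_2,\mathcal{A}_3$ used in Lemma \ref{LS4-312=01}, using $|z|-\lambda\lesssim|y|-\lambda$ on $\mathcal{A}_1$, $|z|-\lambda\lesssim|y-z|$ on $\mathcal{A}_2$, and the $|z|^{2\sigma-n}$ decay on $\mathcal{A}_3$; each resulting integral is bounded by $(|y|-\lambda)$ times a $\lambda$-dependent constant, which is harmless since $\lambda\in[1/2,2]$. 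In the second region I would split analogously to $\mathcal{S}_1,\ldots,\mathcal{S}_4$ from Case~2 of Lemma \ref{LS4-312=01}: radial integrations produce pieces like $\int^{|y|}r^{\alpha-2\sigma-1}\,dr$, which yields either $|y|^{\alpha-2\sigma}$ or $\log|y|$ depending on whether $\alpha\ne 2\sigma$ or $\alpha=2\sigma$, and combining with the $|y|^{2\sigma-n}$ weight gives the stated $|y|^{\alpha-n}+|y|^{2\sigma-n}\log|y|$ bound that is valid for every $\alpha\in(0,1)$.

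The main technical obstacle I anticipate is the $|y|\ge 4\lambda$ case of $\Phi_\lambda^{(1)}$: one must verify that the single combined bound $|y|^{\alpha-n}+|y|^{2\sigma-n}\log|y|$ really does dominate all contributions uniformly in $\alpha=(n-2\sigma)/2\in(0,1)$ so that the same estimate can be fed into the subsequent moving-sphere argument. Beyond that book-keeping, the proof is a direct $Q_\lambda$-bound/$G$-bound combination with no new ideas beyond those already deployed in the proof of Lemma \ref{LS4-312=01}.
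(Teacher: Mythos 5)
Your proposal is correct and essentially reproduces the paper's own argument: you decompose $\Phi_\lambda$ over $\mathcal{P}_j\setminus B_\lambda$ and $\Omega_j\setminus\mathcal{P}_j$, bound the far part by $\int_{|z|\gtrsim\tau M_j^{2/(n-2\sigma)}}|z|^{\alpha-2n}\,dz$ using $\alpha-n=-(n+2\sigma)/2$, and treat the near part with exactly the $\mathcal{A}_1,\mathcal{A}_2,\mathcal{A}_3$ splitting (for $\lambda<|y|\le 4\lambda$, using $\alpha+2\sigma=(n+2\sigma)/2>1$) and the $\mathcal{S}_1,\dots,\mathcal{S}_4$ splitting (for $|y|>4\lambda$) that the paper uses, landing on the same $|y|^{\alpha-n}+|y|^{2\sigma-n}\log|y|$ bound in all three subcases $\alpha\gtrless 2\sigma$. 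Only a small remark: as proved here (Step 3, $2\sigma<n<2\sigma+2$) one always has $\alpha<1\le\sigma<2\sigma$, so only the $\alpha<2\sigma$ branch is actually active; the sum $|y|^{\alpha-n}+|y|^{2\sigma-n}\log|y|$ is kept so the same statement can be re-invoked in Steps 4–5 where $\alpha\ge 1$ and the other cases occur.
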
 
\begin{proof}
It follows from \eqref{SL5-327=001} that 
\begin{equation}\label{SL5-327=004}
\aligned
|\Phi_\lambda (y)| & \leq  c(\tau) M_j^{-1} \int_{\mathcal{P}_j \backslash B_\lambda} G(0, \lambda; y, z) (|z| - \lambda)^\alpha |z|^{-2\sigma-n} dz \\
& ~~~~  + C M_j^{-1} \int_{\Omega_j \backslash \mathcal{P}_j} G(0, \lambda; y, z)  |z|^{\alpha-2\sigma-n}   dz. 
\endaligned
\end{equation}
We consider the following two cases separately. 

\vskip0.10in  

{\it Case 1: } $\lambda <  |y| < 4\lambda$.   As in the proof of Case 1 of Lemma \ref{LS4-312=01}, we consider three sets 
$$
\mathcal{A}_1 = \{z \in \mathcal{P}_j \backslash B_\lambda: |z -y|< (|y| - \lambda)/3 \}, 
$$
$$
\mathcal{A}_2 = \{ z \in \mathcal{P}_j \backslash B_\lambda: |z - y| \geq (|y| - \lambda)/3 ~ \textmd{and} ~ |z|\leq 8\lambda \}, 
$$
$$
\mathcal{A}_3 = \{z\in \mathcal{P}_j \backslash B_\lambda: |z| \geq  8\lambda \}. 
$$
Then,  from Lemma \ref{InK-01}  we have
\begin{equation}\label{SL5-327=005}
G(0, \lambda; y, z) \leq
\begin{cases}
C |y -z|^{2\sigma -n}  ~~~~~~ & \textmd{if} ~z\in \mathcal{A}_1, \\
C \frac{(|y| - \lambda)(|z|^2 - \lambda^2)}{\lambda|y -z|^{n-2\sigma+2}} \leq C \frac{(|y| - \lambda)(|z| - \lambda)}{|y -z|^{n-2\sigma+2}}~~~~~~ & \textmd{if} ~z\in \mathcal{A}_2,\\
C \frac{(|y| - \lambda)(|z|^2 - \lambda^2)}{\lambda|y -z|^{n-2\sigma+2}} \leq C (|y| - \lambda) |z|^{2\sigma-n}~~~~~~ & \textmd{if} ~z\in \mathcal{A}_3. 
\end{cases}
\end{equation}
Thus,
$$
\aligned
\int_{\mathcal{A}_1} G(0, \lambda; y, z) (|z| - \lambda)^\alpha |z|^{-2\sigma-n} dz & \leq C  \int_{\mathcal{A}_1} |y -z|^{2\sigma- n} (|z| - \lambda)^\alpha |z|^{-2\sigma-n}  dz \\
& \leq C (|y| - \lambda), 
\endaligned
$$
where we used $|z| - \lambda \leq 4(|y| - \lambda)/3$ for $z \in \mathcal{A}_1$ and $\alpha+2\sigma =\frac{n+2\sigma}{2} >1$.  For the integral over $\mathcal{A}_2$,  we have
$$
\aligned
\int_{\mathcal{A}_2} G(0, \lambda; y, z) (|z| - \lambda)^\alpha |z|^{-2\sigma-n} dz & \leq C  \int_{\mathcal{A}_2} \frac{(|y| - \lambda)(|z| - \lambda)^{1+\alpha}}{|y -z|^{n-2\sigma+2}}  |z|^{-2\sigma-n}  dz \\
& \leq C (|y| - \lambda), 
\endaligned
$$
where we used $|z| - \lambda \leq 4|z -y|$ for $z \in \mathcal{A}_2$ and $\alpha+2\sigma>1$.  For the integral over $\mathcal{A}_3$,  we have 
$$
\aligned
\int_{\mathcal{A}_3} G(0, \lambda; y, z) (|z| - \lambda)^\alpha |z|^{-2\sigma-n} dz & \leq C  (|y| - \lambda) \int_{\mathcal{A}_3} |z|^{\alpha-2n}  dz \\
& \leq C (|y| - \lambda). 
\endaligned
$$
These estimates give that 
\begin{equation}\label{SL5-327=006} 
\int_{\mathcal{P}_j \backslash B_\lambda} G(0, \lambda; y, z) (|z| - \lambda)^\alpha |z|^{-2\sigma-n} dz \leq C (|y| - \lambda)
\end{equation} 
for some constant $C>0$ depending only on $n$ and $\sigma$.  On the other hand,  by Lemma \ref{InK-01} we obtain for large $j$ that  
\begin{equation}\label{SL5-327=007} 
\aligned
\int_{\Omega_j \backslash \mathcal{P}_j} G(0, \lambda; y, z)  |z|^{\alpha-2\sigma-n} dz  & \leq C (|y| - \lambda)  \int_{|z| \geq \tau M_j^{\frac{2}{n-2\sigma}}}  |z|^{\alpha-2n} dz \\
&  \leq C  \tau^{-\frac{n+2\sigma}{2}} M_j^{-\frac{n + 2\sigma}{n-2\sigma}} (|y| - \lambda). 
\endaligned
\end{equation} 
This together with \eqref{SL5-327=004} and \eqref{SL5-327=006} implies that Lemma \ref{LemmaS5-327=01} holds for $y \in B_{4\lambda} \backslash B_\lambda$. 

\vskip0.10in  

{\it Case 2:  $y \in  \Pi_j  \backslash B_{4\lambda}$.}  As in the proof of Case 2 of Lemma \ref{LS4-312=01}, we define four sets: 
$$
\mathcal{S}_1 = \{ z \in \mathcal{P}_j  \backslash B_\lambda: |z| < |y|/2 \}, 
$$
$$
\mathcal{S}_2 = \{ z \in \mathcal{P}_j  \backslash B_\lambda: |z| > 2|y| \}, 
$$
$$
\mathcal{S}_3 = \{ z \in \mathcal{P}_j  \backslash B_\lambda: |z-y| \leq  |y|/2 \}, 
$$
$$
\mathcal{S}_4 = \{ z \in \mathcal{P}_j  \backslash B_\lambda: |z-y| \geq  |y|/2  ~ \textmd{and} ~ |y|/2 \leq |z| \leq  2|y|\}. 
$$
Notice that $G(0, \lambda; y, z)  \leq C|y -z|^{2\sigma -n}$.  By direct calculations we can get the following estimates: 
$$ 
\int_{\mathcal{S}_1} |y -z|^{2\sigma-n}  |z|^{\alpha -2\sigma-n} dz  
\leq 
\begin{cases}
C |y|^{\alpha -n} ~~~ & \textmd{if} ~ \alpha > 2\sigma,\\
C |y|^{2\sigma -n}\log |y| ~~~ & \textmd{if} ~ \alpha = 2\sigma,\\
C |y|^{2\sigma -n} ~~~  & \textmd{if} ~ \alpha < 2\sigma, 
\end{cases}  
$$ 
$$
\int_{\mathcal{S}_2} |y -z|^{2\sigma-n}  |z|^{\alpha -2\sigma-n} dz \leq C |y|^{\alpha -n}, 
$$
$$
\int_{\mathcal{S}_3} |y -z|^{2\sigma-n}  |z|^{\alpha -2\sigma-n} dz \leq C |y|^{\alpha -2\sigma-n} \int_{\mathcal{S}_3} |y -z|^{2\sigma-n} dz \leq C |y|^{\alpha -n} 
$$
and 
$$
\int_{\mathcal{S}_4} |y -z|^{2\sigma-n}  |z|^{\alpha -2\sigma-n} dz\leq C |y|^{\alpha -n}. 
$$
Therefore, we have 
\begin{equation}\label{SL5-327=008} 
\int_{\mathcal{P}_j \backslash B_\lambda} G(0, \lambda; y, z) (|z| - \lambda)^\alpha |z|^{-2\sigma-n} dz \leq C (|y|^{\alpha -n} + |y|^{2\sigma-n} \log |y|). 
\end{equation}
On the other hand, for  $y\in \Pi_j \backslash B_\lambda$ we have $|y| \leq \frac{5\tau}{4} M_j^{\frac{2}{n-2\sigma}}$ and for  $z\in \Omega_j \backslash \mathcal{P}_j$  we have $|z| \geq \frac{7\tau}{4} M_j^{\frac{2}{n-2\sigma}} $ when $j$ is large. Thus,   
\begin{equation}\label{SL5-327=009} 
\int_{\Omega_j \backslash \mathcal{P}_j} G(0, \lambda; y, z)  |z|^{\alpha-2\sigma-n} dz   \leq C \int_{|z| \geq \tau M_j^{\frac{2}{n-2\sigma}}}  |z|^{\alpha-2n} dz \leq C  \tau^{-\frac{n+2\sigma}{2}} M_j^{-\frac{n + 2\sigma}{n-2\sigma}}. 
\end{equation} 
This together with \eqref{SL5-327=004} and \eqref{SL5-327=008} implies that Lemma \ref{LemmaS5-327=01} holds for $y \in \Pi_j \backslash B_{4\lambda}$.   Thus the proof of Lemma \ref{LemmaS5-327=01} is finished.  
\end{proof}

For $\lambda \in [1/2, 2]$, we construct  $H_\lambda$  as 
\begin{equation}\label{SL5-327=010} 
H_\lambda(y)  = - \varepsilon_1 M_j^{-1} (\lambda^{2\sigma-n} - |y|^{2\sigma -n} ) + \Phi_\lambda(y) ~~~~~ \textmd{for} ~ y \in \Pi_j \backslash \overline{B}_\lambda. 
\end{equation} 
Here  $\varepsilon_1$ is chosen so that $0< \varepsilon_1 \ll \varepsilon_0$ and 
\begin{equation}\label{SL5-328=01} 
- \varepsilon_1 M_j^{-1} (\lambda^{2\sigma-n} - |y|^{2\sigma -n} ) + J_\lambda(y) \geq \frac{1}{2}J_\lambda(y)  ~~~~~  \textmd{for} ~ y\in \Pi_j \backslash \overline{B}_\lambda, 
\end{equation}
where $\varepsilon_0$ is defined  in Lemma \ref{gmhuL5SAn-01}.  It follows from Lemma  \ref{S5^326=01} that \eqref{SL5-328=01} is true by taking sufficiently small $\varepsilon_1$.  Notice that when $\tau$ is small, $c(\tau)$ is correspondingly small.  Hence, by Lemma \ref{LemmaS5-327=01} we can choose $\tau$ to be small enough such that for any $\lambda \in [1/2, 2]$
\begin{equation}\label{SL5-328=02} 
H_\lambda <0  ~~~~~~~ \textmd{in} ~   \Pi_j \backslash \overline{B}_\lambda
\end{equation} 
when $j$ is sufficiently large.  Moreover, by \eqref{SL4-312=04=huy} and \eqref{SL5-328=01} we know that $\varphi_\lambda + H_\lambda$   
satisfies the following integral inequality 
\begin{equation}\label{SL5-328=03}
\varphi_\lambda(y) +  H_\lambda(y) \geq \int_{\Pi_j \backslash B_\lambda} G(0, \lambda; y, z) b_\lambda(z) \varphi_\lambda(z) dz + \frac{1}{2}J_\lambda(y),  ~~~~~ y \in \Pi_j \backslash \overline{B}_\lambda 
\end{equation}
with  $\lambda \in [1/2,  2]$.  

Now we apply the method of moving spheres to $\varphi_\lambda + H_\lambda$ for $\lambda \in [\lambda_0, \lambda_1]$ where $\lambda_0=1/2$ and $\lambda_1=2$.  First, when $\lambda=\lambda_0$, by Lemmas \ref{gmhuL5SAn-01}  and \ref{LemmaS5-327=01} we have $\varphi_{\lambda_0} + H_{\lambda_0} \geq 0$ in $\Pi_j \backslash \overline{B}_{\lambda_0}$ for all large $j$.   Define
$$
\bar{\lambda}:= \sup\{ \mu \geq \lambda_0 ~ | ~ \varphi_{\lambda}(y) +  H_{\lambda}(y) \geq 0, ~ \forall ~ y \in \Pi_j \backslash \overline{B}_{\lambda}, ~ \forall ~ \lambda_0 \leq  \lambda \leq  \mu \}. 
$$
Then,  $\bar{\lambda}$ is well defined and $\bar{\lambda}  \geq \lambda_0$ for all large $j$. Furthermore, by \eqref{SL5-328=02} and Lemma \ref{gmhuL5SAn-01} we see that  $\bar{\lambda} <  \lambda_1$ for all large $j$. Then, as in Step 2 of Section \ref{S3000},  applying the moving sphere method one can derive a contradiction to the definition of $\bar{\lambda}$. Step 3 is established. 

\vskip0.10in  

{\it Step 4. We prove Theorem \ref{IEthm01} under the assumption (K2) when $n  =  2\sigma +2$. }  In  this  case,  $K \in C^1(B_2)$ and thus  by Step 2 we have $\nabla K(0)=0$.   For simplicity,  we may assume $K\in C^1(\overline{B}_2)$.   Based on  these properties of $K$ we have for $\lambda \in [1/2, 2]$ that
\begin{equation}\label{SL5-328=04}
|K(x_j + M_j^{-\frac{2}{n-2\sigma}}z^\lambda) - K(x_j + M_j^{-\frac{2}{n-2\sigma}}z) | \leq
\begin{cases}
 c(\tau) M_j^{-1}  (|z| - \lambda)  & \textmd{for}  ~ z \in \mathcal{P}_j \backslash B_\lambda, \\
 C M_j^{-1}  |z|  &  \textmd{for}  ~ z \in\Omega_j \backslash \mathcal{P}_j. 
\end{cases} 
\end{equation} 
Thus, by  \eqref{SL5-326=003} we get 
\begin{equation}\label{SL5-328=05}
|Q_\lambda(z)| \leq
\begin{cases}
 c(\tau) M_j^{-1}  (|z| - \lambda)  |z|^{-2\sigma-n}~~~~~~~~ & \textmd{for}  ~ z \in \mathcal{P}_j \backslash B_\lambda, \\
 C M_j^{-1}  |z|^{1-2\sigma-n} ~~~~~~~~ &  \textmd{for}  ~ z \in\Omega_j \backslash \mathcal{P}_j, 
\end{cases} 
\end{equation} 
where $ c(\cdot)$ is a nonnegative continuous function with $c(0)=0$.  This indicates that Lemma \ref{LemmaS5-327=01} is still true. 
The rest of the proof of this step is the same as that of Step 3.   Step 4 is established. 

\vskip0.10in 

{\it Step 5. We prove Theorem \ref{IEthm01} under the assumption (K2) when $n >  2\sigma +2$.}    Under the current case, our assumption  is $K\in \mathcal{C}^{\alpha}(B_2)$ with $\alpha=\frac{n-2\sigma}{2}>1$.   In particular,  this implies that $K \in C^1(B_2)$ and,  by Step 2 we obtain $\nabla K(0)=0$.  Thus $0$ is a critical point of $K$,  for $n \geq 2\sigma +4 $, by the assumption on $K$ there exists a neighborhood $N$ of $0$ such that \eqref{Khy2-1} holds.  Without loss of generality we may assume $N=B_2$, since otherwise we can consider the integral equation \eqref{Int} on $N$ and then make a rescaling argument.  Namely,  for $n \geq 2\sigma +4 $ we assume that $K$ satisfies 
\begin{equation}\label{Khy2-10-086}
|\nabla^i K(y)| \leq c(|y|) |\nabla K(y)|^{\frac{\alpha-i}{\alpha-1}}, ~~~ 2\leq i \leq [\alpha], ~~ y \in B_2 
\end{equation}
for some  nonnegative continuous function $c(\cdot)$ with $c(0)=0$. 

Let $Z_j=|\nabla K(x_j)|$,  then  $\lim_{j \to \infty} Z_j=|\nabla K(0)|=0$.  Moreover,  we have the following result. 
\begin{lemma}\label{StepL5-401=01}
\begin{equation}\label{SL5-329=01}
Z_j^{\frac{1}{\alpha-1}}  M_j^{\frac{2}{n-2\sigma}} \to 0.  
\end{equation} 
\end{lemma}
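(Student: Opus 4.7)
The plan is to argue by contradiction. Suppose that $l_j := Z_j^{1/(\alpha-1)} M_j^{2/(n-2\sigma)}$ does not tend to $0$; after extraction we may assume $l_j \geq c_0 > 0$ for all $j$, and (after a rotation) that $\nabla K(x_j)/Z_j \to e = (1,0,\dots,0)$. We will then run a moving-sphere argument on a shifted bubble-scale rescaling, following Step 2 of Section \ref{S3000}, while carrying the additional factor $Z_j$ through every $\nabla K$-related estimate.

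Concretely, we fix $R > 10$ and, as in Step 2 of Section \ref{S3000}, set $w_j(y) = M_j^{-1} u(x_j + M_j^{-2/(n-2\sigma)}(y-Re))$ and $K_j(y) = K(x_j + M_j^{-2/(n-2\sigma)}(y-Re))$, so that $w_j \to U_0(\cdot - Re)$ in $C^2_{\mathrm{loc}}(\mathbb{R}^n)$; we define the Kelvin transform $w_j^\lambda$ and the difference $\varphi_\lambda = w_j - w_j^\lambda$ for $\lambda \in [R-2, R+2]$. The central estimate replacing Claim~2 of Section \ref{S3000} will be
$$ K_j(z^\lambda) - K_j(z) \leq -c\, Z_j M_j^{-2/(n-2\sigma)} (|z|-\lambda) \quad\text{on } D_\lambda, $$
together with the symmetric upper bound $|K_j(z^\lambda) - K_j(z)| \leq C Z_j M_j^{-2/(n-2\sigma)}(|z|-\lambda)$ on $\Omega_j \setminus B_\lambda$. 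To prove these we Taylor-expand $K$ about $x_j$: the leading term $M_j^{-2/(n-2\sigma)} \nabla K(x_j) \cdot (z^\lambda - z)$ has magnitude $\sim Z_j M_j^{-2/(n-2\sigma)}(|z|-\lambda)$ and the correct sign on $D_\lambda$ because $\nabla K(x_j)/Z_j \to e$, while the higher-order Taylor errors are controlled by the Hölder continuity from $K \in \mathcal{C}^\alpha(B_2)$ when $1 < \alpha < 2$, and by the explicit bound \eqref{Khy2-1} when $\alpha \geq 2$; in both cases the vanishing factor $c(|y|)$ at $y = 0$, together with $x_j \to 0$, makes these errors strictly lower order than the leading $Z_j M_j^{-2/(n-2\sigma)}$ term.

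With this estimate, the analogues of Lemmas \ref{LAn-01} and \ref{LS4-312=01} produce $\Phi_\lambda(y) \leq -c\,Z_j M_j^{-2/(n-2\sigma)} \cdot (\text{positive profile})$ and the starting inequality $\varphi_{\lambda_0} + \Phi_{\lambda_0} \geq 0$, while the argument of Lemma \ref{hhff=01} still yields $J_\lambda \geq cM_j^{-1}$. Using $\alpha = (n-2\sigma)/2$, the hypothesis $l_j \geq c_0$ is equivalent to
$$ Z_j M_j^{-2/(n-2\sigma)} \geq c_0^{\alpha - 1} M_j^{-1}, $$
so $|\Phi_\lambda|$ and $J_\lambda$ are of the same order; note also that $Z_j M_j^{-2/(n-2\sigma)} \to 0$, which ensures that $|\Phi_{\lambda_0}|$ is negligible compared with the bubble-scale positive contribution $\varepsilon_0(|y|-\lambda_0)|y|^{2\sigma-1-n}$ of $\varphi_{\lambda_0}$ on compact regions. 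The moving-sphere procedure applied to $\varphi_\lambda + \Phi_\lambda$ thus starts, is stopped at some $\bar\lambda \in [\lambda_0, \lambda_1)$, and is then pushed past $\bar\lambda$ using the positivity $J_{\bar\lambda} > 0$ exactly as in Step 2 of Section \ref{S3000}, delivering the desired contradiction.

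The principal obstacle is the verification of the Claim~2 analogue — in particular, showing that the higher-derivative Taylor errors are strictly subordinate to the leading $Z_j M_j^{-2/(n-2\sigma)}$ term uniformly in $z \in D_\lambda$. This is where the $\mathcal{C}^\alpha$-structure in (K2) and the vanishing of $c(\cdot)$ at $0$ are indispensable, and it is the reason that the case split $1 < \alpha < 2$ versus $\alpha \geq 2$ must be treated separately.
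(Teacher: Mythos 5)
There is a genuine gap in your central estimate. You claim that
$$|K_j(z^\lambda) - K_j(z)| \leq C\, Z_j\, M_j^{-\frac{2}{n-2\sigma}}(|z|-\lambda) \qquad\text{on all of } \Omega_j\setminus B_\lambda,$$
and justify it by arguing that the higher-order Taylor errors are ``strictly lower order'' because $c(\cdot)$ vanishes at $0$ and $x_j\to 0$. But this reasoning only controls the error when the evaluation point $\xi = x_j + M_j^{-2/(n-2\sigma)}\bigl((1-\theta)z^\lambda + \theta z - Re\bigr)$ is close to $x_j$, i.e.\ when $M_j^{-2/(n-2\sigma)}|z|$ is small. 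Near the outer boundary of $\Omega_j$ one has $|z|\sim M_j^{2/(n-2\sigma)}$, so the evaluation point sits at unit distance from $0$; there $c(\cdot)$ is only bounded (not small), and with $\alpha=(n-2\sigma)/2$ the H\"older error contributes
$$\bigl(M_j^{-\frac{2}{n-2\sigma}}|z|\bigr)^{\alpha-1}\cdot M_j^{-\frac{2}{n-2\sigma}}(|z|-\lambda) \sim M_j^{-1}|z|^\alpha \sim 1,$$
whereas your claimed bound is $Z_j M_j^{-2/(n-2\sigma)}(|z|-\lambda)\sim Z_j\to 0$. So the estimate fails by an unbounded factor as soon as $|z|\gg Z_j^{1/(\alpha-1)}M_j^{2/(n-2\sigma)}$. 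The correct bound in that outer region is $|K_j(z^\lambda)-K_j(z)|\lesssim M_j^{-1}|z|^\alpha$, which is of an entirely different form.

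This is not a cosmetic issue: it forces a decomposition of $\Phi_\lambda$ into an inner part (over a ball of radius $\sim Z_j^{1/(\alpha-1)}M_j^{2/(n-2\sigma)}$, where your estimate is valid) and an outer tail (where the $M_j^{-1}|z|^\alpha$ bound must be used), as the paper does. The tail $\Phi_{2,\lambda}$ need not be negative, so your intended conclusion that $\Phi_\lambda$ itself is negative --- which you use to deduce $\varphi_{\bar\lambda}\geq 0$ at the critical radius and to run the moving-sphere continuation --- is unjustified. The paper repairs this by adding a correction $-\varepsilon_1 M_j^{-1}(\lambda^{2\sigma-n}-|y|^{2\sigma-n})$ to form $H_\lambda$, which is made negative by combining the good sign of $\Phi_{1,\lambda}$ with smallness of $\Phi_{2,\lambda}$; the correction is then absorbed into $J_\lambda$. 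Moreover, to make $\Phi_{2,\lambda}$ small relative to $J_\lambda\sim M_j^{-1}$ requires the split radius to be large, which in turn requires knowing first that $Z_j^{1/(\alpha-1)}M_j^{2/(n-2\sigma)}$ is bounded --- hence the paper's two-stage proof (boundedness, then convergence to zero). Your one-shot contradiction $l_j\geq c_0>0$ does not distinguish these cases and offers no mechanism to control the tail when $l_j\to\infty$. You would need to restore the decomposition, the $H_\lambda$ construction, and the two-stage structure to make the argument go through.
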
 
\begin{proof}
We first prove that there exists a constant $C>0$ such that
\begin{equation}\label{Gt7ejw301h=01}
Z_j^{\frac{1}{\alpha-1}}  M_j^{\frac{2}{n-2\sigma}}\leq C ~~~~~~ \textmd{for} ~  \textmd{all}  ~ j.  
\end{equation} 
If not, then up to a subsequence 
\begin{equation}\label{SL5-331=01}
 Z_j^{\frac{1}{\alpha-1}}  M_j^{\frac{2}{n-2\sigma}} \to +\infty.  
\end{equation} 
The following proof is similar to that of Step 2  in Section \ref{S3000}, we  mainly show the modifications that need to be made.   Without loss of generality, we may assume
\begin{equation}\label{SLg75-401=01}
\lim_{j \to \infty} \frac{\nabla K(x_j)}{|\nabla K(x_j)|} = e = (1, 0, \dots, 0). 
\end{equation} 
As in Step 2 of Section \ref{S3000},   we define  $w_j$ and $h_j$ as in \eqref{21SS3-01} and define  $\Omega_j$  as in \eqref{hhmm-9e7=5}.  Then $w_j$ still satisfies \eqref{21SS3-044} on  $\Omega_j $ where $K_j$ is defined as in \eqref{21SS3-033}. Moreover, on every compact subset of $\mathbb{R}^n$,   $w_j$ converges in $C^2$ norm to $U_1=U_0(\cdot - Re)$ by  Step 1.  We also extend $w_j$ to be identically $0$ outside $\Omega_j$ and $K$ to be identically $0$ outside $B_2$. 

Define $w_j^\lambda$ and $h_j^\lambda$ as in \eqref{21SS3-022}, then $w_j^\lambda$ satisfies \eqref{21SS3-055}.  It is clear that Lemma \ref{LAn-01} still holds.  Let $\varphi_\lambda(y) =w_j(y)  -w_j^\lambda(y)$ with $\lambda \in [R-2, R+2]$.  Recall that  $R>10$ is a large positive constant to be determined later.   Then $\varphi_\lambda$ satisfies the following integral inequality for  large $j$,  
\begin{equation}\label{SLg75-401=02}
\varphi_\lambda(y) +  \Phi_\lambda(y) \geq \int_{\Pi_j \backslash B_\lambda} G(0, \lambda; y, z) b_\lambda(z) \varphi_\lambda(z) dz + J_\lambda(y),  ~~~~~ y \in \Pi_j \backslash \overline{B}_\lambda, 
\end{equation}
where $\Pi_j$, $b_\lambda$, $\Phi_\lambda$  and $J_\lambda$ are respectively given as in \eqref{21PAI01}, \eqref{SL4-312=01}, \eqref{SL4-312=02} and \eqref{SL4-312=03}. 

Now  we need to establish some estimates for $\Phi_\lambda$ on $\Pi_j \backslash \overline{B}_\lambda$.  Define a special domain  $D_\lambda$ in $\Omega_j \backslash B_\lambda$ as follows,   
$$
D_\lambda = \{z \in \mathbb{R}^n : \lambda < |z| < 2\lambda, z_1 > 2 |z^{\prime}| ~\textmd{where} ~ z^{\prime}=(z_2, \dots, z_n) \}.
$$ 
Let  $\tilde{l}_j := Z_j^{\frac{1}{\alpha-1}}  M_j^{\frac{2}{n-2\sigma}} $,  then by \eqref{SL5-331=01} we know $\tilde{l}_j \to +\infty$ and so $\tilde{l}_j \gg R$ when $j$ is large.     Under our current assumptions on $K$,  we have

\vskip0.10in

{\it Claim 1: There exists a constant $C_0>0$ independent of $\lambda$ such that for all large $j$,   
\begin{equation}\label{SLg75-401=03}
K_j(z^\lambda) - K_j(z) \leq  -C_0 M_j^{-\frac{2}{n-2\sigma}} |\nabla K(x_j)| (|z| - \lambda), ~~~~~ z \in D_\lambda,
\end{equation}
\begin{equation}\label{SLg75-401=04}
|K_j(z^\lambda) - K_j(z)| \leq C_0 M_j^{-\frac{2}{n-2\sigma}} |\nabla K(x_j)| (|z| - \lambda), ~~~~~ z \in B_{2\tilde{l}_j} \backslash \overline{B}_\lambda,
\end{equation}
\begin{equation}\label{SLg75-401=05}
|K_j(z^\lambda) - K_j(z)| \leq C_0 M_j^{-1} |z|^\alpha, ~~~~~ z \in \Omega_j \backslash B_{2\tilde{l}_j}.
\end{equation}
}
\begin{proof}
(1) For $z \in D_\lambda$ and $2\sigma +2 < n < 2\sigma +4$,   by the mean value theorem and the assumptions on $K$ we obtain that 
$$
\aligned
K_j(z^\lambda) - K_j(z) & = M_j^{-\frac{2}{n-2\sigma}} \nabla K\Big( x_j + M_j^{-\frac{2}{n-2\sigma}}\big( (1-\theta)z^\lambda + \theta z -Re \big)  \Big)  \cdot (z^\lambda  - z) \\
& \leq M_j^{-\frac{2}{n-2\sigma}} \nabla K(x_j)\cdot (z^\lambda -z) + \sup_{0\leq t \leq Z_j^{\frac{1}{\alpha-1}}  } c(t)  M_j^{-\frac{2}{n-2\sigma}} |\nabla K(x_j)|  |z^\lambda -z| \\
& \leq -C_0 M_j^{-\frac{2}{n-2\sigma}} |\nabla K(x_j)|   (|z| - \lambda)
\endaligned
$$
for all large $j$,  where $\theta \in (0, 1)$ and $C_0>0$ is a constant.  

For $z \in D_\lambda$ and $n \geq  2\sigma +4$,   by the mean value theorem and the assumptions on $K$ we obtain that 
$$
\aligned
K_j(z^\lambda) - K_j(z) & = M_j^{-\frac{2}{n-2\sigma}} \nabla K\Big( x_j + M_j^{-\frac{2}{n-2\sigma}}\big( (1-\theta)z^\lambda + \theta z -Re \big)  \Big)  \cdot (z^\lambda  - z) \\
& \leq M_j^{-\frac{2}{n-2\sigma}} \nabla K(x_j)\cdot (z^\lambda -z) + C_1 \sum_{i=2}^{[\alpha]} M_j^{-\frac{2i}{n-2\sigma}}  |\nabla^i  K(x_j)|  |z|^{i-1}|z^\lambda - z| \\
& ~~~~~  +  \sup_{0\leq t \leq |x_j| + Z_j^{\frac{1}{\alpha-1}}  } c(t)  M_j^{-\frac{2\alpha}{n-2\sigma}} |z|^{\alpha-1}   |z^\lambda -z| \\
& \leq  -C_1 M_j^{-\frac{2}{n-2\sigma}} |\nabla K(x_j)|   (|z| - \lambda) \\
& ~~~~~  + c(|x_j|) \sum_{i=2}^{[\alpha]} M_j^{-\frac{2i}{n-2\sigma}}  |\nabla K(x_j)|^{\frac{\alpha-i}{\alpha-1}}   |z|^{i-1}|z^\lambda - z|  \\
& \leq -C_0 M_j^{-\frac{2}{n-2\sigma}} |\nabla K(x_j)|   (|z| - \lambda)
\endaligned
$$
for all large $j$,  where $\theta \in (0, 1)$,  $C_0, C_1, C_2>0$ are different  constants. 

(2) When $z \in B_{2\tilde{l}_j} \backslash \overline{B}_\lambda$, the proof is similar to that of (1),  and we only need to notice that $M_j^{-\frac{2}{n-2\sigma}} |\nabla K(x_j)\cdot (z^\lambda -z)|\leq C M_j^{-\frac{2}{n-2\sigma}} |\nabla K(x_j)|(|z| - \lambda)$. 

(3) For $z \in \Omega_j \backslash B_{2\tilde{l}_j}$ and $2\sigma +2 < n < 2\sigma +4$,   by the mean value theorem and the assumptions on $K$ we obtain that 
$$
\aligned
|K_j(z^\lambda) - K_j(z)| &  \leq M_j^{-\frac{2}{n-2\sigma}} |\nabla K(x_j)|(|z| -\lambda) + C   M_j^{-\frac{2\alpha}{n-2\sigma}} |z|^{\alpha-1} (|z| - \lambda)  \\
& \leq C_0 M_j^{-1} |z|^{\alpha} 
\endaligned
$$
for some constant  $C_0>0$. 

For $z \in \Omega_j \backslash B_{2\tilde{l}_j}$ and $n \geq  2\sigma +4$,   by the mean value theorem and the assumptions on $K$ we obtain that 
$$
\aligned
|K_j(z^\lambda) - K_j(z)| & \leq M_j^{-\frac{2}{n-2\sigma}} |\nabla K(x_j)| (|z| - \lambda) + C \sum_{i=2}^{[\alpha]} M_j^{-\frac{2i}{n-2\sigma}}  |\nabla^i  K(x_j)|  |z|^{i-1}|z^\lambda - z| \\
& ~~~~~  +  C  M_j^{-\frac{2\alpha}{n-2\sigma}} |z|^{\alpha-1}   |z^\lambda -z| \\
& \leq C_0 M_j^{-1} |z|^\alpha 
\endaligned
$$
for some constant $C_0$. Thus Claim 1 is proved. 
\end{proof}

It is also clear that Lemma \ref{Le-hm-074} still holds.  If we let 
$$
Q_\lambda(z) = \left( K_j(z^\lambda) - K_j(z) \right) w_j^\lambda(z)^{\frac{n+2\sigma}{n-2\sigma}} ~~~~~ \textmd{for} ~ z \in\Omega_j \backslash B_\lambda, 
$$
then by Claim 1 above  and Lemma \ref{Le-hm-074} we have
\begin{equation}\label{SLg75-401=06}
Q_\lambda(z) \leq -C M_j^{-\frac{2}{n-2\sigma}} |\nabla K(x_j)| (|z| - \lambda) \left( \frac{1}{1 + (z_1 - \lambda)^2 + |z^{\prime}|^2}\right)^{\frac{n + 2\sigma}{2}} ~~~~~~ \textmd{for} ~ z\in D_\lambda
\end{equation}
and 
\begin{equation}\label{SLg75-401=07}
|Q_\lambda(z)| \leq 
\begin{cases}
C  M_j^{-\frac{2}{n-2\sigma}} |\nabla K(x_j)| (|z| - \lambda)  ~~~~~~ & \textmd{for} ~ z\in D_\lambda, \\
C M_j^{-\frac{2}{n-2\sigma}} |\nabla K(x_j)|  (|z| - \lambda) |z|^{-2\sigma-n} ~~~~~~ & \textmd{for} ~ z \in B_{2\tilde{l}_j} \backslash (B_\lambda \cup D_\lambda), \\
C M_j^{-1}  |z|^{\alpha-2\sigma-n}~~~~~~ & \textmd{for} ~ z\in \Omega_j \backslash B_{2\tilde{l}_j}. 
\end{cases} 
\end{equation}
We split $\Phi_\lambda(y)$ into two parts:   
\begin{equation}\label{SLg75-401=08}
\aligned
\Phi_\lambda(y) & = \int_{B_{2\tilde{l}_j} \backslash B_\lambda} G(0, \lambda; y, z) Q_\lambda(z) dz + \int_{\Omega_j \backslash B_{2\tilde{l}_j}} G(0, \lambda; y, z) Q_\lambda(z) dz \\
& =: \Phi_{1,\lambda}(y) + \Phi_{2, \lambda}(y). 
\endaligned
\end{equation} 

By the estimates of $Q_\lambda(z)$ on $B_{2\tilde{l}_j} \backslash B_\lambda$,  one can see easily that the estimates of $\Phi_{1,\lambda}$ are very similar to  those  of $\Phi_\lambda$ in Lemma \ref{LS4-312=01} of Step 2 of Section \ref{S3000}.  Hence we have the following:

\vskip0.10in 

{\it Claim 2: 
There  exists  a  constant $C>0$ independent of  $\lambda$ such that for all large $j$, 
$$ 
\Phi_{1,\lambda} (y) \leq 
\begin{cases}
- C M_j^{-\frac{2}{n-2\sigma}} |\nabla K(x_j)| (|y| - \lambda) \lambda^{-n} \log \lambda  ~~~~~~ & \textmd{for} ~ y \in B_{4\lambda} \backslash \overline{B}_\lambda, \\
-C M_j^{-\frac{2}{n-2\sigma}} |\nabla K(x_j)| |y|^{2\sigma -n} \lambda^{1-2\sigma} \log \lambda ~~~~~~ & \textmd{for} ~ y \in \Pi_j \backslash \overline{B}_{4\lambda}
\end{cases}
$$
and
$$ 
|\Phi_{1,\lambda} (y)| \leq 
\begin{cases}
C M_j^{-\frac{2}{n-2\sigma}} |\nabla K(x_j)| (|y| - \lambda) \lambda^{2\sigma}  ~~~~~~ & \textmd{for} ~ y \in B_{4\lambda} \backslash \overline{B}_\lambda, \\
C M_j^{-\frac{2}{n-2\sigma}} |\nabla K(x_j)|  |y|^{2\sigma -n} \lambda^{n+1} ~~~~~~ & \textmd{for} ~ y \in  \Pi_j  \backslash \overline{B}_{4\lambda}. 
\end{cases}
$$
}
\begin{proof}
We only need to replace $M_j^{-\frac{2}{n-2\sigma}}$ and $\Omega_j \backslash B_\lambda$ in Lemma \ref{LS4-312=01}   by  $M_j^{-\frac{2}{n-2\sigma}} |\nabla K(x_j)|$ and $B_{2\tilde{l}_j} \backslash B_\lambda$, respectively.  
\end{proof}

Recall that under the current assumptions we have $1< \alpha=(n-2\sigma)/2 <n$.  For the estimates of  $\Phi_{2,\lambda}$,  by an argument similar to Lemma \ref{Lg_lammfb=01} of Section \ref{S3000} we have

\vskip0.10in

{\it Claim 3:  There  exists  a  constant $C>0$ independent of  $\lambda$ such that for all large $j$,
$$ 
|\Phi_{2,\lambda} (y)| \leq 
\begin{cases}
 C M_j^{-1} {\tilde{l}_j}^{\alpha-n} (|y| - \lambda)\lambda^{-1} ~~~~~~ & \textmd{for} ~ y \in B_{4\lambda} \backslash \overline{B}_\lambda, \\
C M_j^{-1} {\tilde{l}_j}^{\alpha-n}~~~~~~ & \textmd{for} ~ y \in \Pi_j  \backslash B_{4\lambda}, ~~ \alpha \ne 2 \sigma \\
C M_j^{-1} {\tilde{l}_j}^{2\sigma-n} \log \tilde{l}_j  ~~~~~~ & \textmd{for} ~ y \in \Pi_j \backslash B_{4\lambda},  ~~ \alpha =2 \sigma. \\
\end{cases}  
$$
}

\vskip0.10in

It is clear that  Lemma \ref{LAn-01} and Lemma \ref{hhff=01} still hold.  We construct a function $H_\lambda$ which is non-positive on $\Pi_j \backslash \overline{B}_\lambda$.  Let  
\begin{equation}\label{SLg75-401=09} 
H_\lambda(y)  = - \varepsilon_1 M_j^{-1} (\lambda^{2\sigma-n} - |y|^{2\sigma -n} ) + \Phi_\lambda(y), ~~~~~ y \in \Pi_j \backslash \overline{B}_\lambda
\end{equation} 
for some small $\varepsilon_1 \in (0, \varepsilon_0/4)$ where $\varepsilon_0$ is defined in Lemma \ref{LAn-01}.  By  Lemma  \ref{hhff=01} we can choose $\varepsilon_1>0$ sufficiently small so  that  for any $ \lambda \in [\lambda_0, \lambda_1]$, 
\begin{equation}\label{SLg75-402=01}
J_\lambda(y) - \varepsilon_1  M_j^{-1} (\lambda^{2\sigma-n} - |y|^{2\sigma -n} ) \geq \frac{1}{2} J_\lambda(y),  ~~~~~ y \in \Pi_j \backslash \overline{B}_\lambda. 
\end{equation}
Here we recall that $\lambda_0=R-2$ and $\lambda_1=R+2$.   Then by Claim 2 and Claim 3 above we have for $\lambda\in [\lambda_0, \lambda_1]$ and  for large $j$, 
\begin{equation}\label{SLg75-402=02} 
H_\lambda(y)  < 0, ~~~~~ y \in \Pi_j \backslash \overline{B}_\lambda.
\end{equation} 
Moreover, it follows from Lemma \ref{LAn-01},  Claim 2 and Claim 3 above that for large $j$, 
$$
\varphi_{\lambda_0}(y) +  H_{\lambda_0}(y) \geq 0,  ~~~~~~~  y \in \Pi_j \backslash \overline{B}_{\lambda_0}. 
$$
By \eqref{SLg75-401=02} and \eqref{SLg75-402=01},  $\varphi_{\lambda} +  H_{\lambda}$ satisfies 
\begin{equation}\label{SLg75-402=03}
\varphi_\lambda(y) +  H_\lambda(y)  \geq \int_{\Pi_j \backslash B_\lambda} G(0, \lambda; y, z) b_\lambda(z) \varphi_\lambda(z) dz  + \frac{1}{2}J_\lambda(y) ~~~~~ \textmd{for} ~ y \in \Pi_j \backslash \overline{B}_\lambda, 
\end{equation}
where $\lambda \in [\lambda_0, \lambda_1]$.  We define
$$
\bar{\lambda}:= \sup\{ \mu \geq \lambda_0 ~ | ~ \varphi_{\lambda}(y) +  H_{\lambda}(y) \geq 0, ~ \forall ~ y \in \Pi_j \backslash \overline{B}_{\lambda}, ~ \forall ~ \lambda_0 \leq  \lambda \leq  \mu \}. 
$$
Then,  $\bar{\lambda}$ is well defined and $\bar{\lambda}  \geq \lambda_0$ for all large $j$. Furthermore, by \eqref{SLg75-402=02} and Lemma \ref{LAn-01} we have $\bar{\lambda} <  \lambda_1$ for all large $j$. Then, as in Step 2 of Section \ref{S3000},  applying the moving sphere method one can derive a contradiction to the definition of $\bar{\lambda}$.   This proves that the sequence $Z_j^{\frac{1}{\alpha-1}}  M_j^{\frac{2}{n-2\sigma}}$  is bounded.

Next we show that $Z_j^{\frac{1}{\alpha-1}}  M_j^{\frac{2}{n-2\sigma}} \to 0$.   If this does not hold, then there exists a constant $\varepsilon_2>0$  such that, after passing to a subsequence if necessary, 
\begin{equation}\label{SLg75-402=04}
Z_j^{\frac{1}{\alpha-1}}  M_j^{\frac{2}{n-2\sigma}} \geq  \varepsilon_2 >0.  
\end{equation} 
The following proof is similar to that of \eqref{Gt7ejw301h=01}, we will use the same notation and give only the corresponding modifications.  
Let $l_0 \gg R $ be a large positive constant which is to be determined after fixing  $R$.   Under our current assumptions on $K$,   similar to  Claim 1  we have    

\vskip0.10in 

{\it Claim 4: There exists a constant $C>0$ independent of both $l_0$  and $\lambda$ such that for all large $j$,   
\begin{equation}\label{SmmLg75-402=01}
K_j(z^\lambda) - K_j(z) \leq  -C M_j^{-1} (|z| - \lambda), ~~~~~ z \in D_\lambda. 
\end{equation}
\begin{equation}\label{SmmLg75-402=02}
|K_j(z^\lambda) - K_j(z)| \leq C M_j^{-1}  (|z| - \lambda), ~~~~~ z \in B_{2l_0} \backslash B_{\lambda}. 
\end{equation}
\begin{equation}\label{SmmLg75-402=03} 
|K_j(z^\lambda) - K_j(z)| \leq C_0 M_j^{-1} |z|^\alpha, ~~~~~ z \in \Omega_j \backslash B_{2l_0}. 
\end{equation}
}

We split $\Phi_\lambda(y)$ into two parts:   
\begin{equation}\label{SmmLg75-402=04}
\aligned
\Phi_\lambda(y) & = \int_{B_{2l_0} \backslash B_\lambda} G(0, \lambda; y, z) Q_\lambda(z) dz + \int_{\Omega_j \backslash B_{2l_0}} G(0, \lambda; y, z) Q_\lambda(z) dz \\
& =: \Phi_{1,\lambda}(y) + \Phi_{2, \lambda}(y). 
\endaligned
\end{equation} 

For the estimates  of $\Phi_{1,\lambda}$,  similar to Claim  2 we have 

\vskip0.10in

{\it Claim 5: 
There  exists  a  constant $C>0$ independent of both $l_0$ and  $\lambda$ such that for all large $j$, 
$$ 
\Phi_{1,\lambda} (y) \leq 
\begin{cases}
- C M_j^{-1}  (|y| - \lambda) \lambda^{-n} \log \lambda  ~~~~~~ & \textmd{for} ~ y \in B_{4\lambda} \backslash \overline{B}_\lambda, \\
-C M_j^{-1} |y|^{2\sigma -n} \lambda^{1-2\sigma} \log \lambda ~~~~~~ & \textmd{for} ~ y \in \Pi_j \backslash \overline{B}_{4\lambda}
\end{cases} 
$$
and
$$ 
|\Phi_{1,\lambda} (y)| \leq 
\begin{cases}
C M_j^{-1}  (|y| - \lambda) \lambda^{2\sigma}  ~~~~~~ & \textmd{for} ~ y \in B_{4\lambda} \backslash \overline{B}_\lambda, \\
C M_j^{-1}  |y|^{2\sigma -n} \lambda^{n+1} ~~~~~~ & \textmd{for} ~ y \in  \Pi_j  \backslash \overline{B}_{4\lambda}. 
\end{cases}
$$
}

For the estimates  of $\Phi_{2,\lambda}$,  similar to Claim  3  we also have 

\vskip0.10in

{\it Claim 6:  There  exists  a  constant $C>0$ independent of  both $l_0$ and $\lambda$ such that for all large $j$,
$$ 
|\Phi_{2,\lambda} (y)| \leq 
\begin{cases}
 C M_j^{-1} {l_0}^{\alpha-n} (|y| - \lambda)\lambda^{-1} ~~~~~~ & \textmd{for} ~ y \in B_{4\lambda} \backslash \overline{B}_\lambda, \\
C M_j^{-1} {l_0}^{\alpha-n}~~~~~~ & \textmd{for} ~ y \in \Pi_j  \backslash B_{4\lambda}, ~~ \alpha \ne 2 \sigma \\
C M_j^{-1} {l_0}^{2\sigma-n} \log l_0 ~~~~~~ & \textmd{for} ~ y \in \Pi_j \backslash B_{4\lambda},  ~~ \alpha =2 \sigma. \\
\end{cases}  
$$
}

\vskip0.10in

It is clear that  Lemma \ref{LAn-01} and Lemma \ref{hhff=01} still hold.  We construct a function $H_\lambda$ which is non-positive on $\Pi_j \backslash \overline{B}_\lambda$.  Let  
\begin{equation}\label{SmmLg75-402=05} 
H_\lambda(y)  = - \varepsilon_1 M_j^{-1} (\lambda^{2\sigma-n} - |y|^{2\sigma -n} ) + \Phi_\lambda(y), ~~~~~ y \in \Pi_j \backslash \overline{B}_\lambda
\end{equation} 
for some small $\varepsilon_1 \in (0, \varepsilon_0/4)$ where $\varepsilon_0$ is defined in Lemma \ref{LAn-01}.  By  Lemma  \ref{hhff=01} we can choose $\varepsilon_1>0$ sufficiently small so  that  for any $ \lambda \in [\lambda_0, \lambda_1]$, 
\begin{equation}\label{SmmLg75-402=06}
J_\lambda(y) - \varepsilon_1  M_j^{-1} (\lambda^{2\sigma-n} - |y|^{2\sigma -n} ) \geq \frac{1}{2} J_\lambda(y),  ~~~~~ y \in \Pi_j \backslash \overline{B}_\lambda. 
\end{equation}
Recall that $\lambda_0=R-2$ and $\lambda_1=R+2$.   Then by Claim 5 and Claim 6 above we can determine $l_0$ to be large enough so that for $\lambda\in [\lambda_0, \lambda_1]$ and  for large $j$, 
\begin{equation}\label{SmmLg75-402=07} 
H_\lambda(y)  < 0, ~~~~~ y \in \Pi_j \backslash \overline{B}_\lambda.
\end{equation} 
Moreover, it follows from Lemma \ref{LAn-01},  Claim 5 and Claim 6 above that for large $j$, 
$$
\varphi_{\lambda_0}(y) +  H_{\lambda_0}(y) \geq 0,  ~~~~~~~  y \in \Pi_j \backslash \overline{B}_{\lambda_0}. 
$$
Now, as in Step 2 of Section \ref{S3000},  the method of moving spheres can be applied  to $\varphi_\lambda + H_\lambda$ for $\lambda \in[\lambda_0, \lambda_1]$ to get a contradiction.  Hence the proof of Lemma \ref{StepL5-401=01} is completed.   
\end{proof}

Once Lemma \ref{StepL5-401=01} holds,  then by the  assumptions  on $K$ we have the following estimates for $|K(x_j + M_j^{-\frac{2}{n-2\sigma}}z^\lambda) - K(x_j + M_j^{-\frac{2}{n-2\sigma}}z) |$ in $\Omega_j \backslash B_\lambda$.  Here  $\lambda \in [1/2, 2]$.   
 
\vskip0.10in 
 
{\it Case 1: $2\sigma +2 < n < 2\sigma +4$.}  Using the mean value theorem and the  assumptions on $K$ yields that for large $j$, 
$$
\aligned
& \big| K(x_j + M_j^{-\frac{2}{n-2\sigma}}z^\lambda) - K(x_j + M_j^{-\frac{2}{n-2\sigma}}z)  \big|  \\
& \leq M_j^{-\frac{2}{n-2\sigma}} |z- z^\lambda|   \Big| \nabla K \Big(x_j + M_j^{-\frac{2}{n-2\sigma}} \big(  (1-\theta) z^\lambda + \theta z  \big) \Big)  \Big|  \\
& \leq \sup_{0\leq t \leq \tilde{M}_j|z|}c(t)   M_j^{-\frac{2\alpha}{n-2\sigma}}  |z|^{\alpha-1}  |z- z^\lambda|      + C M_j^{-\frac{2}{n-2\sigma}} |\nabla K (x_j)|  |z- z^\lambda|   \\    
& \leq
\begin{cases} 
 c(\tau) M_j^{-1}  |z|^{\alpha-1}(|z| - \lambda)  ~~  & \textmd{for}  ~ z \in \mathcal{P}_j \backslash B_\lambda, \\
 C M_j^{-1}  |z|^\alpha   ~~  &  \textmd{for}  ~ z \in\Omega_j \backslash \mathcal{P}_j, 
\end{cases} 
\endaligned
$$
where $\theta \in (0, 1)$, $\tilde{M}_j:= M_j^{-\frac{2}{n-2\sigma}}$  and  Lemma \ref{StepL5-401=01} was used in the last inequality.   

\vskip0.10in 

{\it Case 2:  $n \geq  2\sigma +4$.}  Similar to Case 1, by the mean value theorem,  the  assumptions on $K$ and  Lemma \ref{StepL5-401=01}  we obtain that for large $j$,  
$$
\aligned
& \big| K(x_j + M_j^{-\frac{2}{n-2\sigma}}z^\lambda) - K(x_j + M_j^{-\frac{2}{n-2\sigma}}z)  \big|  \\
& \leq  C \sum_{i=1}^{[\alpha]}  M_j^{-\frac{2i}{n-2\sigma}}  |\nabla^i K(x_j)|  |z|^{i-1}|z-z^\lambda| + \sup_{0\leq t \leq |x_j| +\tilde{M}_j|z|}c(t) M_j^{-\frac{2\alpha}{n-2\sigma}} |z|^{\alpha-1} |z-z^\lambda|    \\
& \leq C \bigg( M_j^{-\frac{2}{n-2\sigma}}  |\nabla K(x_j)|  + c(|x_j|) \sum_{i=2}^{[\alpha]}  M_j^{-\frac{2i}{n-2\sigma}}  |\nabla K(x_j)|^{\frac{\alpha-i}{\alpha-1}}  |z|^{i-1} \bigg)  |z- z^\lambda| \\
&  ~~~~~   + \sup_{0\leq t \leq |x_j| +\tilde{M}_j|z|}c(t) M_j^{-\frac{2\alpha}{n-2\sigma}} |z|^{\alpha-1} |z - z^\lambda|   \\    
& \leq
\begin{cases} 
 c(\tau) M_j^{-1}  |z|^{\alpha-1}(|z| - \lambda)  & \textmd{for}  ~ z \in \mathcal{P}_j \backslash B_\lambda, \\
 C M_j^{-1}  |z|^\alpha   &  \textmd{for}  ~ z \in\Omega_j \backslash \mathcal{P}_j, 
\end{cases} 
\endaligned
$$
where $\tilde{M}_j:= M_j^{-\frac{2}{n-2\sigma}}$.   

Thus,   by combining with \eqref{SL5-326=003} we have for any $n>2\sigma+2$ that 
\begin{equation}\label{SL5-329=04} 
|Q_\lambda(z)| \leq
\begin{cases}
 c(\tau) M_j^{-1}  (|z| - \lambda)  |z|^{\alpha-2\sigma-n-1}~~~~~~~~ & \textmd{for}  ~ z \in \mathcal{P}_j \backslash B_\lambda, \\
 C M_j^{-1}  |z|^{\alpha-2\sigma-n} ~~~~~~~~ &  \textmd{for}  ~ z \in\Omega_j \backslash \mathcal{P}_j, 
\end{cases} 
\end{equation} 
where $ c(\cdot)$ is a nonnegative continuous function with $c(0)=0$.  This indicates that Lemma \ref{LemmaS5-327=01} is still true.   
The rest of the proof of this step is the same as that of Step 3.    Step 5 is established. 
Therefore, the proof of Theorem \ref{IEthm01} under the assumption (K2)  is completely finished.   
\hfill$\square$

%

\noindent{T. Jin and H. Yang}\\
Department of Mathematics, The Hong Kong University of Science and Technology\\
Clear Water Bay, Kowloon, Hong Kong, China \\
E-mail addresses:  tianlingjin@ust.hk (T. Jin) ~~~~~~ mahuiyang@ust.hk (H. Yang)

\end{document}